\newtheorem*{introthm}{Theorem}
\newtheorem{theorem}{Theorem}[section]
\newtheorem{lemma}[theorem]{Lemma}
\newtheorem{proposition}[theorem]{Proposition}
\newtheorem{corollary}[theorem]{Corollary}
\theoremstyle{definition}
\newtheorem{example}[theorem]{Example}
\newtheorem{construction}[theorem]{Construction}
\newtheorem{remark}[theorem]{Remark}
\def\div{{\rm div}}
\def\Eff{{\rm Eff}}
\def\Mov{{\rm Mov}}
\def\Nef{{\rm Nef}}
\def\SAmple{{\rm SAmple}}
\def\quot{/\!\!/}
\def\mal{\! \cdot \!}
\def\rq#1{\widehat{#1}}
\def\t#1{\widetilde{#1}}
\def\b#1{\overline{#1}}
\def\bangle#1{\langle #1 \rangle}
\def\CC{{\mathbb C}}
\def\KK{{\mathbb K}}
\def\TT{{\mathbb T}}
\def\ZZ{{\mathbb Z}}
\def\NN{{\mathbb N}}
\def\QQ{{\mathbb Q}}
\def\PP{{\mathbb P}}
\def\FF{{\mathbb F}}
\def\WDiv{\operatorname{WDiv}}
\def\id{{\rm id}}
\def\Aut{\operatorname{Aut}}
\def\Cl{\operatorname{Cl}}
\def\CaDiv{\operatorname{CaDiv}}
\def\Pic{\operatorname{Pic}}
\def\Hom{{\rm Hom}}
\def\Spec{{\rm Spec}}
\def\cone{{\rm cone}}
\def\pr{{\rm pr}}
\def\Aut{{\rm Aut}}
\def\rk{{\rm rk}\,}
\def\Bl{{\rm Bl}}
\newcounter{itemnumber}
\begin{document}

\title{On Cox rings of K3-surfaces}

\author{Michela Artebani}
\email{martebani@udec.cl}
\address{
Departamento de Matem\'atica, 
Universidad de Concepci\'on, 
Casilla 160-C, 
Concepci\'on, Chile}

\author{J\"urgen Hausen}
\email{juergen.hausen@uni-tuebingen.de}
\address{
Mathematisches Institut,
Universit\"at T\"ubingen,
Auf der Morgenstelle~10,
72076 T\"ubingen,
Germany}

\author{Antonio Laface}
\email{alaface@udec.cl}
\address{
Departamento de Matem\'atica, 
Universidad de Concepci\'on, 
Casilla 160-C, 
Concepci\'on, Chile}

\thanks{The first author has been partially 
supported by Proyecto FONDECYT Regular 2009, N. 1090069.
The third author has been partially supported 
by Proyecto FONDECYT Regular 2008, N.   1080403.}

\begin{abstract}
We study Cox rings of K3-surfaces.
A first result is that a K3-surface
has a finitely generated Cox ring if 
and only if its effective cone is 
rational polyhedral.
Moreover, we investigate degrees of 
generators and relations for 
Cox rings of K3-surfaces of Picard 
number two,
and explicitly compute the Cox rings
of generic K3-surfaces with a 
non-symplectic involution that
have Picard number 
2 to 5 or occur as double covers of 
del Pezzo surfaces.
\end{abstract}

\maketitle

\section{Introduction}

The Cox ring $\mathcal{R}(X)$ of a normal 
complete algebraic variety $X$ with a 
finitely generated divisor class group 
$\Cl(X)$ is the multigraded algebra 
\begin{eqnarray*}
\mathcal{R}(X)
& := & 
\bigoplus_{\Cl(X)} \Gamma(X, \mathcal{O}_X(D)).
\end{eqnarray*}
For a toric variety $X$, 
the Cox ring $\mathcal{R}(X)$ is a polynomial ring 
and its multigrading can be explicitly determined 
in terms of a defining fan, see~\cite{Cox}.
Moreover, for del Pezzo surfaces $X$ there 
are recent results on generators and 
relations of the Cox ring, 
see~\cite{BaPo}, \cite{De}, 
\cite{lv}, \cite{StiTeVe}, \cite{StuXu},
and~\cite{TeVaVe}.
In the present paper, we investigate  
Cox rings of K3-surfaces $X$, i.e., 
smooth complex projective surfaces $X$  
with $b_1(X) = 0$ that admit a nowhere 
vanishing holomorphic 2-form $\omega_X$.

A first basic problem
is to decide if the Cox ring 
$\mathcal{R}(X)$ 
is finitely generated.
In section~\ref{sec:fingen},
we first discuss this question 
in general, and extend results of Hu 
and Keel on $\QQ$-factorial 
projective varieties~\cite{hk} 
to normal complete ones; 
a consequence is that every normal
complete surface with finitely generated 
Cox ring is $\QQ$-factorial and projective,
see Theorem~\ref{normsurffingen}.
For K3-surfaces, we obtain the following, 
see Theorem~\ref{k3fingen}.

\begin{introthm}
A K3-surface has finitely generated Cox ring
if and only if its cone of effective rational 
divisor classes is polyhedral.
\end{introthm}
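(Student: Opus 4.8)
The plan is to prove the two implications separately, both resting on the observation that a K3-surface $X$ is smooth and projective, hence $\QQ$-factorial, and that $\Cl(X) = \Pic(X)$ is a finitely generated free abelian group on which linear and numerical equivalence coincide (as $H^1(X,\Of_X) = 0$). In particular the general results on finite generation from Section~\ref{sec:fingen} are applicable.

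For the implication "finitely generated $\Rightarrow$ polyhedral" I would argue at the level of monoids. A class $w \in \Cl(X)$ is effective precisely when the homogeneous component $\mathcal{R}(X)_w$ is nonzero, and the effective classes form a submonoid $S \subseteq \Cl(X)$ whose associated cone is $\Eff(X)$. If $\mathcal{R}(X)$ is generated by finitely many homogeneous elements of degrees $w_1, \dots, w_r$, then every nonzero homogeneous component lives in a degree $\sum n_i w_i$ with $n_i \in \ZZ_{\ge 0}$, so $S = \langle w_1, \dots, w_r\rangle$ and hence $\Eff(X) = \cone(w_1, \dots, w_r)$ is rational polyhedral. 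This direction uses no geometry specific to K3-surfaces.

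The substance lies in the converse. Assuming $\Eff(X)$ rational polyhedral, it is closed and coincides with the pseudoeffective cone, so its dual $\Nef(X) = \Eff(X)^\vee$ is again rational polyhedral. Here the geometry of K3-surfaces enters at two points. First, since $K_X$ is trivial, every nef class on $X$ is semiample: for $D$ nef with $D^2 > 0$ this follows from the base point free theorem, while for $D \neq 0$ with $D^2 = 0$ a suitable multiple is base point free and defines an elliptic fibration, by the linear-systems theory of Saint-Donat. Second, a movable class meets every curve non-negatively and is therefore nef, whereas semiample classes are movable; thus $\Mov(X) = \Nef(X)$, and as a smooth surface admits no nontrivial small $\QQ$-factorial modifications, the Mori chamber decomposition of $\Mov(X)$ is trivial, consisting of the single nef chamber.

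Putting these together, $X$ satisfies the criterion for a Mori dream space: $\Nef(X)$ is rational polyhedral and generated by semiample classes, and the movable cone is exhausted by this single nef chamber. By the extension of the Hu--Keel correspondence established in Section~\ref{sec:fingen}, this is equivalent to finite generation of $\mathcal{R}(X)$. The main obstacle I expect is exactly this second direction, namely verifying the semiampleness of all nef classes together with the identity $\Mov(X) = \Nef(X)$, since these are the steps where the triviality of $K_X$ and the structure of linear systems on K3-surfaces must be brought in to translate the purely cone-theoretic hypothesis into the geometric Mori dream space criterion.
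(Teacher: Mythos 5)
Your proposal is correct and follows essentially the same route as the paper: the forward direction is the content of Proposition~\ref{prop:effmov}, and the converse reduces, via the surface case of the Hu--Keel extension (Corollary~\ref{smoothfingen}), to showing that every nef class on a K3-surface has a semiample multiple, which you settle by the same case split $D^2>0$ versus $D^2=0$ using classical K3 results. The only cosmetic differences are that you invoke the base point free theorem where the paper cites Mayer, and you compress the paper's fixed-part argument in the $D^2=0$ case (Lemma~\ref{negative} together with Saint-Donat) into a single citation of Saint-Donat's theory.
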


The same characterization holds for Enriques 
surfaces, see Theorem~\ref{enriquesfingen}.
The second basic problem is to describe 
the Cox ring $\mathcal{R}(X)$ in terms 
of generators and relations.
We first consider  K3-surfaces~$X$ 
having Picard number $\varrho(X) = 2$,
see section~\ref{sec:rho2}.
In this setting, if the effective cone 
is polyhedral, then it is known that its
generators are of self intersection zero 
or minus two, see section~\ref{sec:fingen} 
for this and some more background.
For the case that both generators 
are of self-intersection zero, 
we obtain the following, see 
Proposition~\ref{genpic2geneff} and 
Theorem~\ref{gen-2}.

\goodbreak

\begin{introthm}
Let $X$ be a K3-surface with  
$\Cl(X) \cong \ZZ w_1 \oplus \ZZ w_2$,
where $w_1$, $w_2$ are effective,
and intersection form given by
$w_1^2 = w_2^2 = 0$ and 
$w_1 \mal w_2 = k \ge 3$. 
\begin{enumerate}
\item
The effective cone of $X$ is 
generated by $w_1$ and $w_2$
and it coincides with 
the semiample cone of $X$.
\item
The Cox ring $\mathcal{R}(X)$ 
is generated in degrees $w_1$, $w_2$,
$w_1 + w_2$, and one has
$$ 
\dim\left(\mathcal{R}(X)_{w_i}\right)
\ = \ 
2,
\qquad
\dim\left(\mathcal{R}(X)_{w_1+w_2}\right)
\ = \
k + 2.
$$
Moreover, every minimal system of generators 
of $\mathcal{R}(X)$ has $k+2$ members.
\item
For $k = 3$, the 
Cox ring $\mathcal{R}(X)$ is of the form 
$\CC[T_1, \ldots, T_5]/ \bangle{f}$ and the
degrees of the generators and the relation 
are given by 
\begin{eqnarray*} 
\deg(T_1) = \deg(T_2) = w_1,
& \quad &
\deg(T_4) = \deg(T_5) = w_2,
\\[.5ex]
\deg(T_3) = w_1+w_2,
& \quad &
\deg(f) = 3w_1+3w_2.
\end{eqnarray*}
\item
For $k \ge 4$, 
any minimal ideal $\mathcal{I}(X)$ of relations of 
$\mathcal{R}(X)$ 
is generated in degree $2w_1+2w_2$, and we have 
\begin{eqnarray*}
\dim\left(\mathcal{I}(X)_{2w_1+2w_2}\right)
& = & 
\frac{k(k-3)}{2}.
\end{eqnarray*}
\end{enumerate}
\end{introthm}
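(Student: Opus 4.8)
The plan is to treat the four assertions in order, deriving everything from Riemann--Roch on $X$ together with the structure of the two elliptic pencils $|w_1|$ and $|w_2|$. For (i), I would first record that the intersection form on $\Cl(X) = \ZZ w_1 \oplus \ZZ w_2$ is $\left(\begin{smallmatrix} 0 & k \\ k & 0\end{smallmatrix}\right)$, so that $(a w_1 + b w_2)^2 = 2abk$. Since $w_1, w_2$ are effective and pair non-negatively, they are nef, and the dual computation $D \mal w_i \ge 0$ shows $\Nef(X) = \cone(w_1, w_2)$. The equation $2abk = -2$ has no integral solution for $k \ge 3$, so $X$ carries no $(-2)$-curve; hence $\Eff(X)$ equals the closure of the positive cone, which is again $\cone(w_1, w_2)$, giving $\Eff(X) = \Nef(X)$. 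As every nef class on a K3-surface is semiample, one also gets $\SAmple(X) = \Nef(X)$.

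For (ii), Riemann--Roch gives $\chi(D) = \tfrac12 D^2 + 2$. Each $w_i$ is primitive and nef with $w_i^2 = 0$, hence the class of a fibre of an elliptic fibration $\pi_i \colon X \to \PP^1$ with $\Of_X(w_i) = \pi_i^*\Of_{\PP^1}(1)$, so $h^0(w_i) = 2$. The class $w_1 + w_2$ is ample with $(w_1+w_2)^2 = 2k$, whence $h^0(w_1+w_2) = k+2$. The combined morphism $\pi = (\pi_1, \pi_2)\colon X \to \PP^1 \times \PP^1$ is finite of degree $w_1 \mal w_2 = k$, so pullback embeds the four-dimensional space $H^0(\PP^1 \times \PP^1, \Of(1,1))$ into $H^0(w_1+w_2)$; thus the products of a basis of $H^0(w_1)$ with one of $H^0(w_2)$ span a four-dimensional subspace, and exactly $k-2$ further generators are forced in degree $w_1+w_2$. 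That no generators occur in higher degrees I would obtain from surjectivity of all multiplication maps $H^0(D) \otimes H^0(w_i) \to H^0(D+w_i)$ and $H^0(D) \otimes H^0(w_1+w_2) \to H^0(D+w_1+w_2)$, using the standard surjectivity theorems for globally generated line bundles on K3-surfaces (Saint-Donat, Mumford); along the pure rays $\bigoplus_a H^0(a w_i) = \CC[s_0,s_1]$ is a polynomial ring by the fibration structure. Counting then yields $2 + 2 + (k-2) = k+2$ minimal generators.

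For (iii) and (iv), write $\mathcal{R}(X)$ as a quotient of $\CC[S_0, S_1, T_0, T_1, U_1, \ldots, U_{k-2}]$ in the degrees from (ii) and compare, in each multidegree $(a,b)$, the number of monomials with $h^0(a w_1 + b w_2) = abk + 2$ (Riemann--Roch, the class being ample). In multidegree $2w_1 + 2w_2$ the monomial count is $9 + 4(k-2) + \binom{k-1}{2}$, and subtracting $4k+2$ gives exactly $\tfrac{k(k-3)}{2}$, which vanishes precisely for $k=3$. For $k=3$ the first relation therefore sits in degree $3w_1+3w_2$, where the analogous count gives $30 - 29 = 1$; since $\dim \mathcal{R}(X) = \dim X + \rk \Cl(X) = 4$ and there are five generators, a single relation forces $\mathcal{R}(X) = \CC[T_1,\ldots,T_5]/\bangle{f}$ as a hypersurface, proving (iii). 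For $k \ge 4$ I would show $\mathcal{I}(X)$ is generated in degree $2w_1+2w_2$ by restricting to a general fibre $C \in |w_2|$, an elliptic curve of genus one on which $w_1$ restricts to a line bundle of degree $k$; the resulting elliptic normal curve $C \subset \PP^{k-1}$ is projectively normal with homogeneous ideal generated by its $\tfrac{k(k-3)}{2}$ quadrics, and lifting these through the sequences $0 \to \Of_X(D-w_2) \to \Of_X(D) \to \Of_C(D) \to 0$ controls the higher-degree part of $\mathcal{I}(X)$.

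The main obstacle is the two generation statements: surjectivity of the multiplication maps in (ii), and, above all, that for $k \ge 4$ no relations hide in degrees above $2w_1+2w_2$. Both reduce to vanishing of the cohomology of the syzygy (kernel) bundles of the globally generated classes, and the cleanest route is to transport everything to the projective normality and Petri-type quadric generation of elliptic normal curves via the restriction sequences above. Verifying that the lifted quadrics actually \emph{generate} $\mathcal{I}(X)$ — rather than merely spanning the graded piece of the correct dimension — is the delicate point.
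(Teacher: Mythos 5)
The decisive gap is in part (iv). The assertion that any minimal ideal of relations is generated in degree $2w_1+2w_2$ is the hard half of that statement (the dimension count is the easy half, and your count is correct), and you concede at the end that you cannot verify it. Moreover, your proposed route does not close as stated: a minimal generator $F$ of $\mathcal{I}(X)$ could a priori sit in an off-diagonal degree $aw_1+bw_2$ with $a\neq b$; restricting to a general $C\in|w_2|$ collapses the bigrading, and after writing $F|_C=\sum q_ig_i$ with $q_i$ quadrics of the elliptic normal curve, a cofactor $g_i$ of degree $a-2$ involving more than $b-2$ of the degree-$(w_1+w_2)$ variables $U_j$ has no lift to bidegree $(a-2)w_1+(b-2)w_2$, so the subtract-and-induct step breaks exactly in the off-diagonal range. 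The paper needs two separate tools here: for off-diagonal degrees it applies the criterion of \cite{lv} (Theorem~\ref{rels}) together with Lemma~\ref{gens} and Kawamata--Viehweg vanishing ($h^1(w-2w_1-w_2)=h^1(w-3w_1-w_2)=0$ when $a\ge 4,\,b\ge 2$ or $(a,b)=(3,2)$, with separate arguments for $b\le 1$), concluding that these degrees are never essential; only the surviving diagonal degrees $n(w_1+w_2)$ are then treated by quadric generation, and for those the paper invokes \cite[Thm.~7.2]{SD} for the projectively normal model $\phi_{w_1+w_2}(X)\subseteq\PP^{k+1}$ of the K3 itself, not of an elliptic fibre. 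Without a substitute for the off-diagonal step, your argument establishes only $\dim\mathcal{I}(X)_{2w_1+2w_2}=k(k-3)/2$, not statement (iv).

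Two smaller points. In (ii), the appeal to ``standard surjectivity theorems'' hides the real issue: the base-point-free pencil trick (Lemma~\ref{gens}) requires $h^1$-vanishing, and this fails precisely on the diagonal --- e.g.\ $h^1(2w_2)=1$, so the map built from the $w_1$-pencil onto $\mathcal{R}(X)_{2w_1+2w_2}$ need not be surjective. Generation in the diagonal degrees genuinely requires projective normality of $\phi_{w_1+w_2}$, hence the verification that the genus-$(k+1)$ curves in $|w_1+w_2|$ are non-hyperelliptic; the paper does this via primitivity of $w_1+w_2$ and $(w_1+w_2)\mal w_i=k\ge 3$ using \cite[Thm.~5.2]{SD}, and this is exactly where the hypothesis $k\ge3$ enters --- the citation is right, but the verification is the content. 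In (iii), your counting route (surjectivity from (ii), plus the fact that the kernel is a height-one prime in a polynomial ring, hence principal) is a legitimate alternative to the paper's shortcut, which reads off $\deg(f)$ from the identity ``degree of the relation minus the sum of the generator degrees equals $K_X=0$'' (\cite[Prop.~8.5]{BeHa2}); but since $\mathcal{I}(X)=\bangle{f}$ is principal, $\mathcal{I}(X)_{aw_1+bw_2}\neq 0$ exactly when $(a,b)\geq\deg(f)$ componentwise, so pinning down $\deg(f)=3w_1+3w_2$ requires checking the count vanishes also in degrees such as $(3,1)$ and $(3,2)$ and their mirrors, not only in $(2,2)$.
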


The statements on the generators are directly 
obtained, 
and for the relations, we use the techniques
developped in~\cite{lv}.
When at least one of the generators of the 
effective cone is a $(-2)$-curve, 
then the semiample cone is a proper subset
of the effective cone. 
We show that in this case the number of 
degrees needed to generate the Cox ring 
can be arbitrarily big,
Propositions~\ref{gens-0-2} and~\ref{gens-2-2} 
give a lower bound for this number 
in terms of the intersection 
form of $\Cl(X)$.

For the K3-surfaces $X$ with Picard number 
$\varrho(X)\geq 3$, we use a different 
approach. 
Many K3-surfaces $X$ with $\varrho(X)\geq 3$
and polyhedral effective cone 
admit a non-symplectic involution,
i.e., an automorphism $\sigma \colon X \to X$
of order two with $\sigma^*\omega_X \neq \omega_X$.
The associated quotient map $\pi \colon X \to Y$ 
is a double cover.
If it is unramified then $Y := X / \bangle{\sigma}$
is an Enriques surface, 
otherwise $Y$ is a smooth rational surface. 
In the latter case, one may 
use known results and techniques to 
obtain the Cox ring of $Y$.

This observation suggests to study the 
behaviour of Cox rings under double 
coverings $\pi \colon X \to Y$.
As it may be of independent interest, 
we consider in section~\ref{sec:abcov}
more general, e.g., cyclic,
coverings $\pi \colon X \to Y$
of arbitray normal varieties $X$ and~$Y$.
We relate finite generation of the Cox 
rings of $X$ and $Y$ to each other, see 
Proposition~\ref{prop:abcovfingen},
and Proposition~\ref{prop:abcov}
provides generators and relations for 
the Cox ring of $X$ in terms of $\pi$ 
and the Cox ring of $Y$ for the case 
that $\pi$ induces 
an isomorphism on the level of divisor 
class groups.
This enables us to compute Cox rings of 
K3-surfaces that are general double covers of 
$\FF_0$ or of del Pezzo surfaces.

Besides $\FF_0$ and the del Pezzo surfaces, 
other rational surfaces 
$Y = X / \bangle{\sigma}$
can occur.
For $2 \le \varrho(X) \le 5$, these turn out
to be blow ups of the fourth Hirzebruch surface
$\FF_4$ in at most three general points, 
and we are in this setting if and only 
if the branch divisor of the covering 
$\pi \colon X \to Y$ 
has two components.
Then, in order to determine the Cox ring 
of $X$, we have to solve two problems.
Firstly, the computation of the Cox ring
of $Y$. While blowing up one or two points 
gives a toric surface, the blow up
of $\FF_4$ in three general points is 
non-toric; we compute
its Cox ring in section~\ref{sec:coxblow} 
using the technique of toric ambient 
modifications developped in~\cite{Ha2}.
The second problem is that $\pi \colon X \to Y$ 
induces no longer an isomorphism on the 
divisor class groups. Here,
Proposition~\ref{f4double} provides a 
general result. 

Putting all together,
we obtain the following results
in the case of Picard number 
$2 \le \varrho(X) \le 5$, see
Propositions~\ref{rhoX2} to~\ref{rhoX5}
for the complete statements.

\goodbreak

\begin{introthm}
Let $X$ be a generic K3-surface with a 
non-symplectic involution and associated 
double cover $X \to Y$ and Picard number
$2 \le \varrho(X) \le 5$. 
Then the Cox ring $\mathcal{R}(X)$ 
is given as follows.
\begin{enumerate}
\item
For $\varrho(X) = 2$ one has
$\mathcal{R}(X)
 = 
\CC[T_1,\ldots, T_5]/\bangle{T_5^2 - f}
$
and the degree of $T_i$ is the $i$-th column 
of 
\begin{eqnarray*}
{\tiny
\left[
\begin{array}{rrrrr}
1 & 0 & 1 & 0 & 2
\\
0 & 1 & 0 & 1 & 2
\end{array}
\right]
}
& & 
\text{if} \quad Y \ = \ \FF_0,
\\[.5ex]
{\tiny
\left[
\begin{array}{rrrrr}
1 & 0 & -1 & -1 & -1
\\
0 & 1 & 1 & 1 & 3
\end{array}
\right]
}
& & 
\text{if} \quad Y \ = \ \FF_1,
\\[.5ex]
{\tiny
\left[
\begin{array}{rrrrr}
1 & 0 & 2 & 0 & 3
\\
0 & 1 & 4 & 1 & 6
\end{array}
\right]
}
& & 
\text{if} \quad Y \ = \ \FF_4.
\end{eqnarray*}
\item
For $\varrho(X) = 3$ one has
$\mathcal{R}(X)
 = 
\CC[T_1,\ldots, T_6]/\bangle{T_6^2 - f}
$
and the degree of $T_i$ is the 
$i$-th column of
\begin{eqnarray*}
{\tiny
\left[
\begin{array}{rrrrrr}
1 & 0 & 0 & 1 & 0 & 2
\\
0 & 1 & 0 & 0 & 1 & 2
\\
0 & 0 & 1 & 1 & 1 & 3
\end{array}
\right]
}
& &  \text{if} \quad Y \ = \ \Bl_1(\FF_0),
\\[.5ex]
{\tiny
\left[
\begin{array}{rrrrrr}
1 & 0 & 0 & 2 & 0 & 3
\\
0 & 1 & 0 & 1 & -1 & 1
\\
0 & 0 & 1 & 3 & 1 & 5
\end{array}
\right]
}
& & \text{if} \quad Y \ = \ \Bl_1(\FF_4).
\end{eqnarray*}
\item
For $\varrho(X) = 4$ one has
$\mathcal{R}(X)
 = 
\CC[T_1,\ldots, T_7]/\bangle{T_7^2 - f}
$
and the degree of $T_i$ is the 
$i$-th column of 
\begin{eqnarray*}
{\tiny
\left[
\begin{array}{rrrrrrr}
1 & 0 & 0 & 0 & 1 & 0 & 2
\\
0 & 1 & 0 & 0 & 0 & 1 & 2
\\
0 & 0 & 1 & 0 & 1 & 1 & 3
\\
0 & 0 & 0 & 1 & -1 & -1 & -1 
\end{array}
\right]
}
& &   
\text{if} \quad Y \ = \ \Bl_2(\FF_0),
\\[.5ex]
{\tiny
\left[
\begin{array}{rrrrrrr}
1 & 0 & 0 & 0 & 2 & 0 & 3
\\
0 & 1 & 0 & 0 & 3 & 1 & 5
\\
0 & 0 & 1 & 0 & 1 & -1 & 1
\\
0 & 0 & 0 & 1 & 2 & 1 & 4
\end{array}
\right]
}
& &   
\text{if} \quad Y \ = \ \Bl_2(\FF_4).
\end{eqnarray*}
\item
For $\varrho(X)= 5$ one has the following two cases.
\begin{enumerate}
\item
The surface $Y$ is the blow up of $\FF_0$ 
at three general points.
Then the Cox ring $\mathcal{R}(X)$ of $X$ is
\begin{eqnarray*}
\CC[T_1,\ldots, T_{11}]/\bangle{f_1, \ldots, f_5, \, T_{11}^2 - g},
\end{eqnarray*}
where $f_1, \ldots, f_5, g \in \CC[T_1,\ldots, T_{10}]$ 
and $f_1, \ldots, f_5$ are the Pl\"ucker relations 
in $T_1, \ldots, T_{10}$.
The degree of $T_i$ is the $i$-th column of
\begin{eqnarray*}
{\tiny
\left[
\begin{array}{rrrrrrrrrrr}
0 & 0 & 0 & 0 & 1 & 1 & 1 & 1 & 1 & 1 &    -3 
\\
1 & 0 & 0 & 0 & -1 & -1 & -1 & 0 & 0 & 0 &  1
\\
0 & 1 & 0 & 0 & -1 & 0 & 0 & -1 & -1 & 0 &  1
\\
0 & 0 & 1 & 0 & 0 & -1 & 0 & -1  & 0 & -1 & 1
\\
0 & 0 & 0 & 1 & 0 & 0 & -1 & 0 & -1 & -1 & 1
\end{array}
\right]
}
\end{eqnarray*}
\item
The surface $Y$ is the blow up of $\FF_4$ 
at three general points.
Then the Cox ring $\mathcal{R}(X)$ of $X$ is
\begin{eqnarray*}
\CC[T_1,\ldots, T_9]/\bangle{T_2T_5 + T_4T_6+T_7T_8, \, T_9^2 - f},
\end{eqnarray*}
where $f \in \CC[T_1, \ldots, T_8]$ is a 
prime polynomial and
the degree of $T_i \in \mathcal{R}(X)$ is
the $i$-th column of
\begin{eqnarray*}
{\tiny
\left[
\begin{array}{rrrrrrrrr}
1 & 0 & 0 & 0 & 0 & 0 & -2 & 2 & 1
\\
0 & 1 & 0 & 0 & 0 & 1 & -2 & 3  & 4
\\
0 & 0 & 1 & 0 & 0 & -1 & -1 & 1  & 0
\\
0 & 0 & 0 & 1 & 0 & 1 & -1 & 2 & 4
\\
0 & 0 & 0 & 0 & 1 & 0 & 1 & -1 & 1
\end{array}
\right]
}
\end{eqnarray*}
\end{enumerate}
\end{enumerate}
\end{introthm}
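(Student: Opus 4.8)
The plan is to treat the four Picard numbers separately and, in each case, to reduce the determination of $\mathcal{R}(X)$ to the Cox ring of the quotient $Y = X/\bangle{\sigma}$ via the double covering $\pi\colon X \to Y$. First I would use the genericity hypothesis together with the lattice-theoretic classification of non-symplectic involutions on K3-surfaces to pin down $Y$ exactly: for $2 \le \varrho(X) \le 5$ one obtains the (weak) del Pezzo surfaces $\FF_0$, $\FF_1$ and their blow ups $\Bl_1(\FF_0), \Bl_2(\FF_0), \Bl_3(\FF_0)$ on the one hand, and the blow ups $\FF_4, \Bl_1(\FF_4), \Bl_2(\FF_4), \Bl_3(\FF_4)$ on the other. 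The same classification records the number of components of the branch divisor $B \in |-2K_Y|$, which is the single invariant deciding which of the two covering propositions applies: $B$ is irreducible in the $\FF_0$- and $\FF_1$-cases, whereas it splits into two components in every $\FF_4$-case, as anticipated in Section~\ref{sec:abcov}.

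The second step is to compute $\mathcal{R}(Y)$ for each surface on the list. The surfaces $\FF_0$, $\FF_1$, $\FF_4$ and their blow ups in at most two suitably placed points are toric, so their Cox rings are polynomial and the gradings are read off directly from the defining fans as in~\cite{Cox}. The surface $\Bl_3(\FF_0)$ is the del Pezzo surface of degree five, whose Cox ring is the homogeneous coordinate ring of the Grassmannian $G(2,5)$, generated by its ten $(-1)$-curves with the five Plücker relations; here I would invoke the known del Pezzo computations of~\cite{BaPo, TeVaVe}. The only genuinely new input is $\mathcal{R}(\Bl_3(\FF_4))$, which is non-toric: this is the content of Section~\ref{sec:coxblow}, where the technique of toric ambient modifications from~\cite{Ha2} yields the single quadratic relation $T_2T_5 + T_4T_6 + T_7T_8$ appearing in case (iv)(b).

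The third step lifts these rings through $\pi$. In the $\FF_0$- and $\FF_1$-cases the map $\pi^*\colon \Cl(Y) \to \Cl(X)$ is an isomorphism, so Proposition~\ref{prop:abcov} applies verbatim and gives $\mathcal{R}(X) = \mathcal{R}(Y)[T]/\bangle{T^2 - f}$, where $\deg(T) = -K_Y$ and $f$ is the section cutting out $B$, of degree $-2K_Y$; pulling back the grading of $\mathcal{R}(Y)$ produces the stated degree matrices. In the $\FF_4$-cases the two-component branch divisor forces $\pi^*$ to be merely a sublattice of index two, so Proposition~\ref{f4double} must be used instead; it again adjoins one square-root generator with a single relation $T^2 - f$, but now over the enlarged class group $\Cl(X)$, which accounts for the non-standard entries in the corresponding matrices.

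I expect the main obstacle to lie precisely in the $\FF_4$-cases, and above all in the $\varrho(X) = 5$ subcase (iv)(b): one must first secure the non-toric Cox ring of $\Bl_3(\FF_4)$ by a careful ambient modification, and then control the index-two extension of Proposition~\ref{f4double} well enough to identify the correct degree of the defining section $f$ and to verify that it remains \emph{prime} on the nose. Throughout, the genericity assumption is what guarantees that $f$ is irreducible, that the listed surfaces are the only quotients that occur, and that no relations beyond those displayed are needed.
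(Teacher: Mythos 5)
Your proposal follows essentially the same route as the paper: identify $Y$ and the branch divisor via Nikulin's classification of $2$-elementary lattices (Proposition~\ref{quot}), compute $\mathcal{R}(Y)$ by toric methods, the degree-five del Pezzo result of~\cite{BaPo}, and the toric ambient modification of Proposition~\ref{prop:f4bl3}, and then lift through $\pi$ using Proposition~\ref{prop:abcov} when the branch divisor is irreducible and Proposition~\ref{f4double} when it has two components. This matches the paper's own proofs of Propositions~\ref{rhoX2} through~\ref{rhoX5} in all essentials.
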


If $Y = X/\bangle{\sigma}$ is a del Pezzo surface, 
then the Cox ring of $Y$ is known, and 
we obtain the following for the Cox ring 
of $X$, see Proposition~\ref{doubledelp}.

\begin{introthm}
Let $X$ be a generic K3-surface 
with a non-symplectic involution,
associated double cover 
$\pi \colon X \to Y$
and intersection form
$U(2)\oplus A_1^{k-2}$, 
where $5 \le k \le 9$.
Then $Y$ is a del Pezzo surface of 
Picard number $k$
and
\begin{enumerate}
\item
the Cox ring $\mathcal{R}(X)$ is generated by the
pull-backs of the $(-1)$-curves of $Y$, 
the section $T$ defining the ramification divisor
and, for $k =9$, 
the pull-back of an irreducible section
of $H^0(Y,- K_Y)$,
\item
the ideal of relations of $\mathcal{R}(X)$ 
is generated by quadratic
relations of degree $\pi^*(D)$, 
where $D^2=0$ and $D \mal K_Y=-2$,
and the relation $T^2-f$ in degree 
$-2\pi^*(K_Y)$, where $f$ is the pullback
of the canonical section of the branch divisor.
\end{enumerate}
\end{introthm}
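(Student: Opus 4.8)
The plan is to realize $\mathcal{R}(X)$ as a quadratic extension of the (known) Cox ring of the del Pezzo base $Y$ by means of the covering machinery of Section~\ref{sec:abcov}. The first task is to check the hypotheses of Proposition~\ref{prop:abcov}, namely that the pullback $\pi^*\colon \Cl(Y)\to\Cl(X)$ is an isomorphism of abelian groups and that $Y$ is an honest del Pezzo surface. Since $X$ is generic, its class group coincides with the invariant lattice $L^{+}$ of the non-symplectic involution, and $\pi^*$ maps $\Cl(Y)$ isomorphically onto $L^{+}$, scaling the intersection form by the factor $\deg\pi=2$. Consequently $\Cl(Y)$ carries the form $(U(2)\oplus A_1^{k-2})(\tfrac12)=U\oplus\bangle{-1}^{k-2}$, an odd indefinite unimodular lattice of signature $(1,k-1)$; by the classification of indefinite unimodular lattices it is isometric to $\bangle{1}\oplus\bangle{-1}^{k-1}$, the Picard lattice of a del Pezzo surface of Picard number $k$. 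Genericity of $X$ then guarantees that the branch curve $B=\div(s)\in|{-}2K_Y|$ is smooth and irreducible---a single component, which is exactly what excludes the $\FF_4$-blow-up case---and that $Y$ has no $(-2)$-curves, so that $-K_Y$ is ample and $Y$ is del Pezzo of degree $10-k$.

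Next I would import the description of the Cox ring of a del Pezzo surface from the references quoted in the introduction. For $5\le k\le 8$, that is degree $10-k\ge 2$, the ring $\mathcal{R}(Y)$ is generated by the sections of the $(-1)$-curves, and its ideal of relations is generated by the quadratic Pl\"ucker-type relations sitting in the degrees $D\in\Cl(Y)$ with $D^2=0$ and $D\mal K_Y=-2$. For $k=9$, that is degree one, the $(-1)$-curves no longer generate $\mathcal{R}(Y)$ in the anticanonical and bianticanonical degrees; here one additional irreducible anticanonical section is genuinely required, whereas the bianticanonical generator can be chosen to be the branch section $s$.

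With these inputs Proposition~\ref{prop:abcov} applies and presents $\mathcal{R}(X)$ as $\mathcal{R}(Y)[T]/\bangle{T^2-f}$, where $T$ is the section cutting out the ramification divisor $R$ and $f=\pi^*s$. As $\Pic(X)$ is torsion-free and $2R=\pi^*B\in|{-}2\pi^*K_Y|$, the generator $T$ lies in degree $-\pi^*K_Y$ and the relation $T^2-f$ in degree $-2\pi^*K_Y$. Pulling the generators of $\mathcal{R}(Y)$ back along $\pi^*$ and adjoining $T$ yields assertion (i): the generators are the pullbacks of the $(-1)$-curve sections, the section $T$, and, for $k=9$, the pullback of an irreducible anticanonical section. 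In the same way the quadratic relations of $\mathcal{R}(Y)$ pull back to relations in the degrees $\pi^*(D)$ with $D^2=0$ and $D\mal K_Y=-2$, and together with $T^2-f$ they generate the ideal of relations, giving assertion (ii).

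The step I expect to be the main obstacle is the boundary case $k=9$. For a del Pezzo surface of degree one the $(-1)$-curves fail to generate the anticanonical and bianticanonical graded pieces, and the minimal presentation of $\mathcal{R}(Y)$ carries relations beyond the quadratic ones. I would have to verify that, after the extension by $T$ with $T^2=f$, precisely one new anticanonical generator survives, the bianticanonical generator being absorbed into $T^2$, and that the surviving higher relations of $\mathcal{R}(Y)$ become consequences of the quadratic relations and of $T^2-f$, so that no further generators or relations occur. This rests on the genericity hypothesis and on a dimension count of $\mathcal{R}(X)$ in the degrees $\pi^*(-K_Y)$ and $\pi^*(-2K_Y)$, where the splitting $\pi_*\Of_X=\Of_Y\oplus\Of_Y(K_Y)$ decomposes each graded piece into a base part and a ramification part.
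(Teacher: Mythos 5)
Your strategy coincides with the paper's proof of Proposition~\ref{doubledelp}: show $\Cl(X)=\pi^*(\Cl(Y))$, deduce that $Y$ is a del Pezzo surface of Picard number $k$, import the known presentation of $\mathcal{R}(Y)$, and conclude via Proposition~\ref{prop:abcov}. But two of your steps have genuine gaps.

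First, your opening assertion that $\pi^*$ maps $\Cl(Y)$ isomorphically onto $L^{+}=\Cl(X)$ is not a general fact and does not follow from genericity: by Proposition~\ref{genk3double}~(ii), the index of $\pi^*(\Cl(Y))$ in $\Cl(X)$ equals $2^{n-1}$, where $n$ is the number of components of the branch divisor $B$, so your assertion is \emph{equivalent} to the irreducibility of $B$ --- which you invoke only afterwards (a circular order), and again credit to genericity. That attribution is wrong: for the companion lattices $U\oplus A_1^{k-2}$ the K3-surface is equally generic, yet $B$ has two components and $\pi^*$ has index two; these are exactly the $\FF_4$-type quotients you need to exclude. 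What forces $n=1$ is the lattice itself: the paper argues as in Proposition~\ref{quot}, using Nikulin's classification~\cite[Thm.~4.2.2]{n2}, by which the $2$-elementary lattice $U(2)\oplus A_1^{k-2}$ (invariants $(r,a)=(k,k)$) has as fixed locus a single smooth curve of genus $11-k$; alternatively, one can combine \cite[Lemma~2.1]{BPV} with $\vert\det\Cl(X)\vert=2^k$ and the unimodularity of $\Cl(Y)$ to force the index to be one, hence $n=1$. Some such argument is indispensable. The same applies to your unproved claim that $Y$ has no $(-2)$-curves: it is true, but needs an argument, e.g.\ a $(-2)$-curve $C$ would satisfy $C\mal B=-2K_Y\mal C=0$, hence be disjoint from $B$, so $\pi^{-1}(C)$ would be two disjoint smooth rational curves in one and the same class $w\in\Cl(X)$ (since $\sigma^*=\id$ on $\Cl(X)$ by genericity), giving the contradiction $-2=w^2=0$.

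Second, the case $k=9$. Your input presentation of $\mathcal{R}(Y)$ for a degree one del Pezzo surface is incorrect: by \cite[Thm.~3.2]{BaPo} --- the result the paper quotes --- $\mathcal{R}(Y)$ is generated by the $(-1)$-curves together with \emph{two} linearly independent sections of $H^0(Y,-K_Y)$, and it has no generator in degree $-2K_Y$. Indeed, every nonzero effective class $D$ on a del Pezzo surface has $-K_Y\mal D\ge 1$ while $(-K_Y)^2=1$, so $-K_Y$ admits no decomposition into nonzero effective classes, and the whole two-dimensional space $H^0(Y,-K_Y)$ must consist of generators. Your ``one anticanonical generator plus a bianticanonical generator chosen to be the branch section'' describes the anticanonical ring of $Y$ (the sextic model in weighted projective space), not its Cox ring, and fails to generate already in degree $-K_Y$; the verification you defer would expose the same defect on $X$, since $\mathcal{R}(X)_{\pi^*(-K_Y)}=\pi^*H^0(Y,-K_Y)\oplus\CC\, T$ is three-dimensional and, by the same indecomposability of $-K_Y$, contains no products, so it needs three generators: $T$ and the pull-backs of a \emph{basis} of $H^0(Y,-K_Y)$ (the wording ``an irreducible section'' in the statement is itself loose on exactly this point). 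The paper's proof avoids all of this by taking the correct presentations from \cite[Thm.~3.2]{BaPo} and \cite[Lemmas~2.2, 2.3 and~2.4]{lv} and then applying Proposition~\ref{prop:abcov}; with those inputs your deferred dimension count becomes unnecessary.
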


We would like to thank the referee for several 
comments which helped to clarify the exposition.

\section{Finite generation of the Cox ring}
\label{sec:fingen}

The first result of this section is 
Theorem~\ref{fingenchar}, which 
characterizes finite generation 
of the Cox ring for normal complete varieties
in a similar way as  Hu and Keel
did for $\QQ$-factorial projective varieties 
in~\cite{hk}. 
In the case of surfaces, we obtain statements
extending and complementing results of Galindo 
and Monserrat~\cite{GalMo}; a consequence is 
that every normal complete surface with finitely
generated Cox ring is projective and $\QQ$-factorial,
see Theorem~\ref{normsurffingen}.
Our main application is Theorem~\ref{k3fingen},
which characterizes finite generation
of the Cox ring of a K3-surface.

Let $X$ be a normal complete algebraic variety
defined over some algebraically 
closed field~$\KK$ of characteristic zero
and assume that the divisor class group 
$\Cl(X)$ is finitely generated.
The {\em Cox ring\/} of $X$ is the 
ring $\mathcal{R}(X)$ of global sections 
of a sheaf $\mathcal{R}$ of $\Cl(X)$-graded 
$\KK$-algebras.
We briefly recall the definition for the 
case of that $\Cl(X)$ is free and refer 
to~\cite{Ha2} for the case of torsion:
take a subgroup $K \subseteq \WDiv(X)$ of the 
group of Weil divisors such that the canonical 
morphism $K \to \Cl(X)$ is an 
isomorphism and set
$$
\mathcal{R}(X)
\quad := \quad
\Gamma(X,\mathcal{R}),
\qquad
\qquad
\mathcal{R}
\quad := \quad
\bigoplus_{D \in K} \mathcal{O}_X(D),
$$
where multiplication is defined via multiplying 
homogeneous sections in the field of rational
functions of $X$.
Up to isomorphy this definition does not 
depend on the choice of the group 
$K \subseteq \WDiv(X)$.
We will also identify $K$ with 
$\Cl(X)$. So, for $w \in \Cl(X)$ 
represented by $D \in K$, the 
homogeneous component $\mathcal{R}(X)_w$ 
is just $H^0(X,\mathcal{O}_X(D))$.

If the divisor class group
$\Cl(X)$ is free, then the Cox ring 
$\mathcal{R}(X)$  admits
unique factorization, see~\cite{BeHa1}.
If $\Cl(X)$ has torsion, then the unique
factorization property is replaced by 
a graded version: every nontrivial homogeneous
nonunit is a product of homogeneous primes,
where the latter refers to nontrivial homogeneous 
nonunits $f$ such that $f \vert gh$ with $g,h$ 
homogeneous implies $f \vert g$
or $f \vert h$, see~\cite{Ha2}.
If the Cox ring  $\mathcal{R}(X)$ 
is finitely generated as a $\KK$-algebra,
then one may define the 
{\em total coordinate space\/} 
$\b{X} = \Spec \, \mathcal{R}(X)$ and 
realize $X$ as the quotient of an open 
subset $\rq{X} = \Spec_X \mathcal{R}$ 
of $\b{X}$ by the action of the 
diagonalizable group $\Spec \, \Cl(X)$ 
defined by the 
$\Cl(X)$-grading of the sheaf $\mathcal{R}$
of $\mathcal{O}_X$-algebras.
For smooth $X$, the map $\rq{X} \to X$ 
is also known as the {\em universal torsor\/}
of $X$.

Let 
$\Cl_{\QQ}(X) = \Cl(X) \otimes_{\ZZ} \QQ$
denote the rational divisor class group
of $X$. 
A first step is to give descriptions 
of the cones
$\Eff(X) \subseteq \Cl_{\QQ}(X)$ 
of effective classes and 
$\Mov(X) \subseteq \Cl_{\QQ}(X)$
of movable classes, i.e.,~classes 
having a stable base locus of codimension 
at least two. We call a cone in 
a rational vector space $V$ polyhedral 
if it is the positive hull
$\cone(v_1, \ldots, v_r)$ of
finitely many vectors $v_i \in V$. 
The following statement generalizes
part of~\cite[Prop.~4.1]{Ha2}.

\begin{proposition}
\label{prop:effmov}
Let $X$ be a normal complete variety
with finitely generated Cox ring 
$\mathcal{R}(X)$.
Then the cones of effective and 
movable divisor classes in $\Cl_{\QQ}(X)$
are polyhedral. 
Moreover, if $f_1, \ldots, f_r \in \mathcal{R}(X)$
is any system of pairwise non-associated 
homogeneous prime generators, then one has
\begin{eqnarray*}
\Eff(X)
& = &
\cone(\deg(f_i); \; i = 1, \ldots, r),
\\
\Mov(X)
& = &
\bigcap_{i=1}^r
\cone(\deg(f_j); \; j \ne i).
\end{eqnarray*}
\end{proposition}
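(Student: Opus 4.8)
The plan is to exploit two structural features of a finitely generated Cox ring: it is an integral domain that is (graded) factorial, as recalled above, and its homogeneous prime elements correspond, up to scalar, to the prime divisors on $X$. Under the quotient presentation $\rq{X} \subseteq \b{X} = \Spec \mathcal{R}(X)$ recalled above, with acting group $H = \Spec \Cl(X)$, each homogeneous prime $f$ cuts out an $H$-invariant prime divisor $V(f) \subseteq \b{X}$; because the complement $\b{X} \setminus \rq{X}$ has codimension at least two, these descend bijectively to the prime divisors of $X$, and I would cite~\cite{Ha2} for this correspondence. Writing $D_i$ for the prime divisor attached to the generator $f_i$, one has $\deg(f_i) = [D_i]$, and the basic dictionary I use throughout is that a homogeneous section $g \in \mathcal{R}(X)_w = H^0(X,\mathcal{O}_X(D))$ vanishes along a prime divisor $D$ if and only if the associated prime $f_D$ divides $g$ in $\mathcal{R}(X)$.

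For the effective cone, first I would observe that $\{w : \mathcal{R}(X)_w \neq 0\}$ is exactly the semigroup of effective classes in $\Cl(X)$, since $\mathcal{R}(X)_w = H^0(X,\mathcal{O}_X(D))$ is nonzero precisely when $w$ is represented by an effective divisor. As $f_1,\dots,f_r$ generate $\mathcal{R}(X)$, every nonzero homogeneous element is a $\KK$-linear combination of monomials $f_1^{a_1}\cdots f_r^{a_r}$, and, the ring being a domain, each such monomial is itself nonzero of degree $\sum_i a_i \deg(f_i)$. Hence the semigroup of effective classes equals $\sum_i \NN\,\deg(f_i)$, and passing to the generated cone gives $\Eff(X) = \cone(\deg(f_i);\, i=1,\dots,r)$, which is polyhedral.

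For the movable cone, the key reduction is to control the divisorial part of the stable base locus of a class $w$. A prime divisor $D$ lies in this divisorial part exactly when $f_D$ divides every homogeneous element of $\mathcal{R}(X)_{nw}$ for all $n$. Here I would distinguish two cases. If $D$ is not one of the $D_i$, then $f_D$ is a homogeneous prime distinct, up to scalar, from each generator; being prime it divides no product of the $f_i$, hence no monomial, so $D$ can never be a divisorial base component. If $D = D_i$, then $f_i \mid f_1^{a_1}\cdots f_r^{a_r}$ iff $a_i \geq 1$; thus $D_i$ fails to be a fixed component of some $|nw|$ precisely when some $nw$ admits a monomial representation avoiding $f_i$, i.e.\ $nw \in \sum_{j\neq i}\NN\,\deg(f_j)$. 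Taking multiples, $D_i$ is absent from the divisorial stable base locus of $w$ if and only if $w \in \cone(\deg(f_j);\, j\neq i)$. Combining the two cases, $w$ has stable base locus of codimension at least two iff it avoids all the $D_i$ divisorially, iff $w \in \bigcap_{i=1}^r \cone(\deg(f_j);\, j\neq i)$; being a finite intersection of rational polyhedral cones this is polyhedral, and comparing lattice points on both sides yields the stated equality.

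The main obstacle I expect is the careful justification of the geometric--algebraic dictionary --- that vanishing of a section along $D$ matches divisibility by $f_D$, and that the prime--divisor correspondence descends correctly through the small-complement quotient $\rq{X} \to X$ --- together with the routine but necessary passage between integral classes and their rational multiples when translating ``stable base locus'', an asymptotic and multiplicative notion, into membership in the rational cones $\cone(\deg(f_j);\, j \neq i)$. Once this dictionary is in place, the combinatorics of which monomials are divisible by a given generator is entirely elementary.
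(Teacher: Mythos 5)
Your argument is correct, and while it shares the paper's combinatorial core (everything is reduced to deciding which homogeneous primes divide all monomials of the $f_i$ in the degrees $nw$), it replaces the paper's key lemma by a different mechanism. The paper likewise treats the effective cone as immediate, but for the moving cone it proves and uses Lemma~\ref{lem:pol2mov}: a divisorial component of the stable base locus of $w$ produces a class $w_0$ spanning an extremal ray of $\Eff(X)$ with $h^0(nw_0)=1$ for all $n$, together with a section $f_0 \in \mathcal{R}(X)_{w_0}$ dividing every element of every $\mathcal{R}(X)_{nw}$; the proof of the proposition then has to match this $f_0$ against the generators, which is done by renumbering so that $f_1,\ldots,f_d$ are exactly the generators whose degrees satisfy $h^0(nw_i)\le 1$, with some ray-commensurability details left implicit. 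You instead invoke the graded-factoriality dictionary of \cite{BeHa1}, \cite{Ha2} directly: prime divisors on $X$ correspond to homogeneous prime elements, and vanishing along a prime divisor is divisibility by its prime canonical section. Your resulting case split --- the obstructing prime is either non-associated to every generator, in which case it divides no monomial (and monomials of degree $nw$ do exist, by effectivity of $w$ via your first paragraph, a point worth stating), or it is associated to some $f_i$, in which case the cone condition for that index $i$ produces a monomial it cannot divide --- eliminates the extremal-ray bookkeeping and in fact yields a sharper pointwise statement: an effective integral class is movable if and only if it lies in $\bigcap_{i}\cone(\deg(f_j);\ j\ne i)$, from which the equality of cones follows by homogeneity. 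What the paper's detour buys is reusability: Lemma~\ref{lem:pol2mov} is invoked again in the proof of Theorem~\ref{fingenchar}, so isolating it is not wasted effort there. What your route buys is a shorter, more transparent proof of the proposition itself, at the cost of leaning harder on the cited correspondence between prime divisors and homogeneous primes, which is exactly the kind of statement one should cite carefully (including the torsion case, where factoriality must be read in its graded form).
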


For the proof and also for later use, we 
have to fix some notation. 
On a normal variety $X$, let 
a class $w \in \Cl(X)$ 
be represented by a divisor
$D \in \WDiv(X)$.
Then, as usual, we write 
$$ 
H^i(D) 
\ := \ 
H^i(X,D)
\ := \ 
H^i(X,\mathcal{O}_X(D)),
\quad
h^i(w) 
\ := \
h^i(D)
\ := \ 
\dim_\KK(H^i(D)).
$$

\goodbreak

\begin{lemma}
\label{lem:pol2mov}
Let $X$ be a normal complete variety  
with $\Cl(X)$ finitely generated
and let $w \in \Cl(X)$ be effective.
Then the following two statements
are equivalent.
\begin{enumerate}
\item
The stable base locus of the class 
$w \in \Cl(X)$ contains a divisor.
\item
There exist a class $w_0 \in \Cl(X)$
with the following properties
\begin{itemize}
\item
the class $w_0$ generates an extremal 
ray of $\Eff(X)$ and $h^0(nw_0) = 1$ 
holds for any $n \in \NN$,
\item
there is an $f_0 \in \mathcal{R}(X)_{w_0}$ 
such that for any $n \in \NN$ and 
$f \in \mathcal{R}(X)_{nw}$ one has 
$f = f'f_0$ with some
$f' \in \mathcal{R}(X)_{nw-w_0}$. 
\end{itemize}
\end{enumerate}
\end{lemma}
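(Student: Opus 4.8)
The plan is to prove both implications by passing to the graded-factorial Cox ring and using the dictionary between prime divisors $E$ on $X$ and their canonical sections $1_E \in \mathcal{R}(X)_{[E]}$, which are exactly the homogeneous primes of $\mathcal{R}(X)$. The one observation I would record at the outset is that for a prime divisor $E$ and a class $nw$, an element $f \in \mathcal{R}(X)_{nw} = H^0(X,nD)$ is divisible by $1_E$ in $\mathcal{R}(X)$ if and only if the effective divisor $\div(f)+nD$ contains $E$; consequently $E$ lies in the stable base locus of $w$ if and only if $1_E$ divides \emph{every} homogeneous element of $\mathcal{R}(X)_{nw}$, for all $n \in \NN$. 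This turns both statements into divisibility statements in $\mathcal{R}(X)$.

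The implication $(ii)\Rightarrow(i)$ is the routine direction. Letting $D_0 \in \WDiv(X)$ represent $w_0$, I would take the effective divisor $E_0 := \div(f_0)+D_0$. Since $w_0$ spans an extremal ray it is nonzero, and as $\Eff(X)$ is pointed this forces $E_0 \neq 0$, so $E_0$ is a genuine divisor. From $f = f'f_0$ with $f' \in \mathcal{R}(X)_{nw-w_0}$ one gets $\div(f)+nD \geq \div(f_0)+D_0 = E_0$ for every $f \in \mathcal{R}(X)_{nw}$, so $\Supp(E_0)$ lies in the base locus of $|nD|$ for all $n$, hence in the stable base locus. Here only $w_0 \neq 0$ is used, neither extremality nor rigidity.

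For $(i)\Rightarrow(ii)$ I would pick a prime divisor $E$ in the divisorial stable base locus and set $w_0 := [E]$, $f_0 := 1_E$; the divisibility clause of $(ii)$ is then exactly the dictionary above. It remains to check that $w_0$ is rigid, i.e. $h^0(nw_0)=1$ for all $n$, and that it spans an extremal ray. I would deduce extremality \emph{from} rigidity: granting $h^0(n[E])=1$, the space $\mathcal{R}(X)_{n[E]}$ is spanned by $1_E^n$, so any decomposition $n[E] = v_1+v_2$ into integral effective classes yields $g_1g_2 \in \KK\cdot 1_E^n$ with $g_i \in \mathcal{R}(X)_{v_i}$; since $1_E$ is prime, graded factoriality gives $g_i = \lambda_i 1_E^{j_i}$ with $j_1+j_2=n$, whence $v_i = j_i[E] \in \RR_{\geq 0}[E]$. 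Together with pointedness of $\Eff(X)$ this shows the ray is extremal.

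The real obstacle, and the step I would develop most carefully, is therefore the rigidity of a prime sitting in a divisorial stable base locus. Suppose $h^0(NE)\geq 2$ for some $N$; subtracting the $E$-multiplicity from a member $G\neq NE$ of $|NE|$ produces an effective divisor $G'$ with $G' \sim N'E$ for some $1\le N'\le N$ and $E \not\subseteq \Supp(G')$ (that $[E]$ is non-torsion, needed to guarantee $G'\neq 0$, follows from $1_E$ being a nonunit prime). The decisive move is a substitution using this new linear equivalence: taking a general $H \in |mw|$ whose multiplicity along $E$ equals the coefficient $e\geq 1$ of $E$ in the fixed part of $|mw|$, and writing $H = eE+H'$ with $E \not\subseteq \Supp(H')$, I pass to $N'H = e\,(N'E)+N'H' \sim e\,G'+N'H'$, which is an effective divisor in $|N'mw|$ of multiplicity zero along $E$. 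This contradicts $E \subseteq \mathrm{Bs}|N'mw|$, hence contradicts $E$ lying in the stable base locus of $w$; so $E$ must be rigid. The points demanding care are purely bookkeeping — keeping the multiplicities and linear equivalences consistent and ensuring $G'\neq 0$ — with pointedness of $\Eff(X)$ and non-torsion of $[E]$ being exactly what make the substitution yield an honest contradiction.
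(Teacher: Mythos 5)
Your proof is correct and takes essentially the same route as the paper's: in both, the implication (ii)$\Rightarrow$(i) is routine, and for (i)$\Rightarrow$(ii) one picks a prime divisor $E$ contained in the divisorial stable base locus, sets $w_0 = [E]$ with $f_0$ its canonical section, proves rigidity $h^0(nw_0)=1$ first, and then deduces extremality of $\cone(w_0)$ from rigidity. Your detailed subtract-and-substitute argument for rigidity (producing $G' \sim N'E$ effective with $E \not\subseteq \Supp(G')$ and then exhibiting a member of $|N'mw|$ avoiding $E$) is precisely the content that the paper compresses into its one-line remark that $H^0(na_0D_0)\subseteq \mathcal{R}(X)_{nw}$ would ``provide enough sections to move $na_0D_0$.''
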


\begin{proof}
The implication ``(ii)$\Rightarrow$(i)''
is obvious.
So, assume that~(i) holds.
The class $w \in \Cl(X)$ is represented 
by some non-negative divisor $D$.
Let $D_0$ be a prime component of $D$,
which occurs in the fixed part of any 
positive multiple $nD$,
and let $w_0 \in \Cl(X)$ be the class of 
$D_0$.
Then the canonical section of $D_0$ 
defines an element $f_0 \in \mathcal{R}(X)_{w_0}$,
which divides any $f \in \mathcal{R}(X)_{nw}$. 
Note that $h^0(nD_0) = 1$ holds for 
every $n \in \ZZ_{\ge 0}$,
because otherwise
$H^0(na_0D_0)\subseteq \mathcal{R}(X)_{nw}$,
where $a_0 > 0$ is the multiplicity of
$D_0$ in $D$,  
would provide enough sections to move 
$na_0D_0$.  
Moreover, $\cone(w_0)$ is an extremal 
ray of $\Eff(X)$, because otherwise 
we had $nD_0 \sim D_1+D_2$
with some $n \in \ZZ_{\ge 0}$
and non-negative divisors $D_1$, $D_2$,
none of which is a multiple of
$D_0$; this contradicts 
$h^0(nD_0) = 1$.
\end{proof}

\begin{proof}[Proof of Proposition~\ref{prop:effmov}]
Only for the description of the moving cone,
there is something to show.
For this, set $w_i := \deg(f_i)$ and note that the 
extremal rays of the effective cone occur among the 
$\QQ_{\ge 0} \mal w_i$.
By suitably renumbering we achieve that
the indices $1 \le i \le d$ are precisely those 
with $h^0(nw_i) \le 1$ for all $n \in \NN$.

Let $w \in \Mov(X)$.
Then Lemma~\ref{lem:pol2mov} tells us 
that for any $i = 1, \ldots, d$, 
there must be a monomial of the form
$\prod_{j \ne i} f_j^{n_j}$
in some $\mathcal{R}(X)_{nw}$.
Consequently, $w$ lies in the cone
of the right hand side.
Conversely, consider an element  $w$ of 
the cone of the right hand side.
Then, for every $i = 1, \ldots, d$,
a product $\prod_{j \ne i} f_j^{n_j}$
belongs to some $\mathcal{R}(X)_{nw}$.
Hence none of the $f_1, \ldots, f_d$
divides all elements of $\mathcal{R}(X)_{nw}$.
Again by Lemma~\ref{lem:pol2mov},
we conclude $w \in \Mov(X)$.
\end{proof}

We characterize finite 
generation of the Cox ring. 
By $\SAmple(X) \subseteq \Cl_{\QQ}(X)$
we denote the cone of semiample divisor 
classes of a variety $X$, i.e., classes 
having a basepoint free positive multiple.
Moreover, by a small birational map $X \to Y$, 
we mean a rational map that defines an 
isomorphism $U \to V$ of open subsets 
$U \subseteq X$ and $V \subseteq Y$ such 
that the respective complements $X \setminus U$ 
and $Y \setminus V$ are of codimension at least
two.

\begin{theorem}
\label{fingenchar}
Let $X$ be a normal complete variety 
with finitely generated divisor class group. 
Then the following statements are equivalent.
\begin{enumerate}
\item
The Cox ring $\mathcal{R}(X)$ is finitely generated.
\item
The effective cone $\Eff(X) \subseteq \Cl_{\QQ}(X)$ 
is polyhedral and there are 
small birational maps $\pi_i \colon X \to X_i$, 
where $i = 1, \ldots, r$,
such that each semiample cone 
$\SAmple(X_i) \subseteq \Cl_{\QQ}(X)$ is polyhedral and 
one has
\begin{eqnarray*}
\Mov(X)
& = & 
\pi_1^*(\SAmple(X_1)) \ \cup \ \ldots \ \cup \ \pi_r^*(\SAmple(X_r)).
\end{eqnarray*}
\end{enumerate}
Moreover, if one of these two statements holds, 
then there is a small birational map $X \to X'$
with a $\QQ$-factorial projective variety $X'$.
\end{theorem}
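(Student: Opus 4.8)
The plan is to read both implications off the geometry of the action of the quasitorus $H = \Spec \Cl(X)$ on the total coordinate space $\b X = \Spec \mathcal R(X)$, following the variation-of-GIT approach of Hu and Keel but systematically replacing the nef cone by the semiample cone so as to dispense with $\QQ$-factoriality. For \emph{(i)$\Rightarrow$(ii)}, assume $\mathcal R(X)$ is finitely generated; then $\Eff(X)$ and $\Mov(X)$ are polyhedral by Proposition~\ref{prop:effmov}, and we may form $\b X$ together with the characteristic space $\rq X = \Spec_X \mathcal R$, so that $X = \rq X \quot H$. For a class $w \in \Cl_\QQ(X)$ write $\b X^{\mathrm{ss}}(w)$ for its set of semistable points and $X(w) := \b X^{\mathrm{ss}}(w) \quot H$ for the associated GIT quotient. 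I would invoke the standard fact, available from the GIT machinery for diagonalizable group actions on affine varieties used in~\cite{Ha2}, that as $w$ runs through $\Eff(X)$ only finitely many loci $\b X^{\mathrm{ss}}(w)$ occur and the corresponding classes tile $\Eff(X)$ into finitely many polyhedral chambers. The decisive point to verify is a codimension criterion: a chamber $\lambda$ meets $\relint\Mov(X)$ precisely when $\b X^{\mathrm{ss}}(w) \setminus \rq X$ has codimension at least two in $\b X$ for $w \in \relint\lambda$; for such $\lambda$ the induced rational map $\pi_\lambda \colon X \to X_\lambda := X(w)$ is small and birational, and $\SAmple(X_\lambda)$ equals the closure of $\lambda$. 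Running over this finite list of chambers produces the maps $\pi_i$ and the decomposition $\Mov(X) = \bigcup_i \pi_i^*(\SAmple(X_i))$.

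For \emph{(ii)$\Rightarrow$(i)}, I would first establish finite generation of the movable Cox ring $\bigoplus_{w \in \Mov(X) \cap \Cl(X)} H^0(X, w)$. Fixing one small modification $\pi_i \colon X \to X_i$, smallness gives $H^0(X, w) = H^0(X_i, \pi_i^*w)$ for all $w$, so it suffices to treat the polyhedral cone $\SAmple(X_i)$ on $X_i$. Since this cone is spanned by semiample classes, a multiple of each spanning class is basepoint free and defines a morphism to a projective variety pulling back an ample class; the resulting finite generation of multi-section rings of semiample divisors yields finite generation of $\bigoplus_{w \in \SAmple(X_i) \cap \Cl(X)} H^0(X_i, w)$, and gluing these finitely many pieces along $\Mov(X) = \bigcup_i \pi_i^*(\SAmple(X_i))$ makes the movable Cox ring finitely generated. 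To pass from $\Mov(X)$ to $\Eff(X)$, I would use that by Proposition~\ref{prop:effmov} the prime generators of $\mathcal R(X)$ whose degree lies off $\Mov(X)$ are finitely many rigid ones $f_1, \ldots, f_d$, and by Lemma~\ref{lem:pol2mov} every section with degree outside $\Mov(X)$ is divisible by a product of these. Hence adjoining the canonical sections $f_1, \ldots, f_d$ to the movable Cox ring generates all of $\mathcal R(X)$, which is therefore finitely generated.

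For the \emph{moreover} statement, either hypothesis supplies the chamber decomposition of the first paragraph. I would choose a full-dimensional chamber $\lambda_0$ contained in $\Mov(X)$ and a class $w_0 \in \relint\lambda_0$; since $X$ is complete one has $\mathcal R(X)_0 = \KK$, so generic linearizations of the quasitorus action on the affine variety $\b X$ have $\QQ$-factorial projective GIT quotients, whence $X' := X(w_0)$ is $\QQ$-factorial and projective, while the codimension criterion of the first step makes $X \to X'$ small and birational. The step I expect to be the main obstacle is the heart of (i)$\Rightarrow$(ii): proving, without $\QQ$-factoriality, that the movable cone is tiled exactly by the semiample cones of the small GIT modifications. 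This forces one to match the purely algebraic notion of semistability with geometric positivity, identifying each $\SAmple(X_i)$ with a GIT chamber and establishing the codimension-two criterion that separates the small modifications $X(w)$ from the genuine contractions occurring for $w \notin \Mov(X)$. By comparison the gluing in (ii)$\Rightarrow$(i) and the finite generation of a multi-section ring over a polyhedral semiample cone are more routine, though both lean essentially on the semiampleness hypothesis.
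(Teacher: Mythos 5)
Your overall architecture is the paper's: GIT chambers on the total coordinate space for (i)$\Rightarrow$(ii) and the \emph{moreover} statement, and, for (ii)$\Rightarrow$(i), finite generation of the movable Veronese subalgebra followed by adjoining finitely many rigid sections. But your (ii)$\Rightarrow$(i) contains a genuine circularity: you produce the rigid generators $f_1,\ldots,f_d$ ``whose degree lies off $\Mov(X)$'' by appealing to Proposition~\ref{prop:effmov} --- and that proposition \emph{assumes} $\mathcal{R}(X)$ is finitely generated, which is exactly what you are trying to prove in this direction. The repair, which is the paper's actual route, uses only hypothesis~(ii): since $\Eff(X)$ is polyhedral it has finitely many extremal rays, and Lemma~\ref{lem:pol2mov} (which needs only $\Cl(X)$ finitely generated) shows that any divisorial component of a stable base locus is accounted for by a class $w_0$ on an extremal ray with $h^0(nw_0)=1$ for all $n$. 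So one takes the primitive generators $w_1,\ldots,w_d$ of the extremal rays satisfying $h^0(nw_i)\le 1$, fixes $f_i\in\mathcal{R}(X)_{n_iw_i}$ with $n_i$ minimal, observes $\bigoplus_n\mathcal{R}(X)_{nw_i}=\KK[f_i]$, and then runs your division argument --- adding the termination observation that iterated division by the $f_i$ would otherwise push the degree out of $\Eff(X)$, so every section eventually lands in the movable part.

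A secondary gap sits in (i)$\Rightarrow$(ii). You rightly flag the matching of chambers with semiample cones of small modifications as the crux, but you leave it as a criterion ``to verify.'' The paper closes this by invoking \cite[Cor.~4.3]{Ha2}, and to be entitled to that result two hypotheses must be checked which your sketch never mentions: first, admissibility of the system of prime generators, i.e.\ that the weights $w_j$, $j\ne i$, still generate $\Cl(X)$, proved by choosing a point with $f_i(z)=0$, $f_j(z)\ne 0$ inside the open set of \cite[Prop.~2.2]{Ha2} on which $H$ acts freely; second, full-dimensionality of $\Mov(X)$, which is not formal --- it is Lemma~\ref{movfull} in the paper, proved via Chow's lemma and resolution of singularities --- and without it the chamber decomposition statement you want does not apply. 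Your treatment of the movable Cox ring (Gordon's lemma plus finite generation of multi-section rings of semiample divisors, as in \cite[Lemma~1.8]{hk}), the gluing over $\Mov(X)=\bigcup_i\pi_i^*(\SAmple(X_i))$, and the \emph{moreover} statement via a full-dimensional chamber inside $\Mov(X)$ all agree with the paper once these two verifications and the de-circularized rigid-generator argument are supplied.
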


In the proof we use that the moving cone 
of any normal complete variety is of full 
dimension; we are grateful to Jenia~Tevelev
for providing us with the following 
statement and proof.

\begin{lemma}
\label{movfull}
Let $X$ be a normal complete variety with $\Cl(X)$ 
finitely generated.
Then the moving cone $\Mov(X)$ is of full 
dimension in the rational divisor class group
$\Cl_{\QQ}(X)$.
\end{lemma}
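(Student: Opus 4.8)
The plan is to reduce to the case of a smooth projective variety by a birational modification, and then to transport a full-dimensional cone of movable classes down to $X$ by pushforward.

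First I would fix a convenient birational model. By Chow's lemma there is a projective variety mapping properly and birationally onto $X$, and resolving its singularities (we work in characteristic zero) produces a smooth projective variety $X'$ together with a proper birational morphism $\phi \colon X' \to X$. The finitely many $\phi$-exceptional prime divisors $E_i$ fit into an excision sequence $\bigoplus_i \ZZ \mal [E_i] \to \Cl(X') \to \Cl(X) \to 0$; since $\Cl(X)$ is finitely generated, so is $\Cl(X')$, and the induced map $\phi_* \colon \Cl_{\QQ}(X') \to \Cl_{\QQ}(X)$ is surjective. As $X'$ is smooth we have $\Cl(X') = \Pic(X')$, and finite generation of this group forces $\Pic^0(X')$ to be trivial, so that $\Cl_{\QQ}(X')$ coincides with the N\'eron--Severi space $N^1(X')_{\QQ}$. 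In the latter the ample cone is open and of full dimension, and every ample class is movable, because a suitable multiple is even basepoint free; hence $\Mov(X')$ has nonempty interior in $\Cl_{\QQ}(X')$.

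The core step is to show $\phi_*(\Mov(X')) \subseteq \Mov(X)$. Given $w' \in \Mov(X')$ represented by a movable divisor $D'$, I would set $D := \phi_* D'$. For any rational function $f$ one has $\div_X(f) = \phi_*\div_{X'}(f)$, so pushforward sends a section $f$ of $nD'$ to a section of $nD$ via the identity $\div_X(f) + nD = \phi_*(\div_{X'}(f) + nD')$. Now fix a prime divisor $P \subseteq X$ and let $\tilde P \subseteq X'$ be its proper transform. Since $D'$ is movable, its stable base locus contains no divisor, so for some $n$ there is a section $f$ of $nD'$ with $\div_{X'}(f) + nD'$ not containing $\tilde P$. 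As $\tilde P$ is the only prime divisor of $X'$ dominating $P$, and does so birationally, the coefficient of $P$ in $\phi_*(\div_{X'}(f) + nD')$ equals the coefficient of $\tilde P$ in $\div_{X'}(f) + nD'$, namely zero; thus $P$ lies outside the base locus of $|nD|$. Hence no prime divisor lies in the stable base locus of $D$, and Lemma~\ref{lem:pol2mov} gives $D \in \Mov(X)$.

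Finally, being surjective and linear, $\phi_*$ is an open map, so the image $\phi_*(\Mov(X'))$ of the full-dimensional cone $\Mov(X')$ is itself full-dimensional in $\Cl_{\QQ}(X)$; as it is contained in $\Mov(X)$, the moving cone of $X$ is of full dimension. The main obstacle is the core step, namely controlling the base locus of the pushed-forward linear system: here the essential points are that pushforward is compatible with principal divisors and that passing to proper transforms gives an exact bookkeeping of coefficients along prime divisors, so that movability upstairs forces movability downstairs.
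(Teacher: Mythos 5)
Your proof is correct, and it shares its skeleton with the paper's: both pass to a smooth projective model $\phi \colon X' \to X$ via Chow's lemma and resolution of singularities, and both rest on the same core mechanism, namely that pushforward along a proper birational morphism onto a normal variety carries movable classes to movable classes -- the coefficient bookkeeping with proper transforms that you carry out explicitly is exactly what the paper uses implicitly. Where you genuinely diverge is in how full-dimensionality is produced. The paper never establishes that any cone upstairs is full-dimensional: it fixes prime divisors $D_1, \ldots, D_r$ generating $\Cl(X)$, takes a very ample $E'$ on $X'$ such that all $E' + D_i'$ are very ample (with $D_i'$ the proper transforms), and pushes these finitely many basepoint-free classes forward; the resulting classes $E$ and $E + D_i$ are movable on $X$ and span a full-dimensional cone in $\Cl_{\QQ}(X)$ simply because their differences are the $D_i$, which generate $\Cl(X)$. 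You instead prove that $\Mov(X')$ is full-dimensional in $\Cl_{\QQ}(X')$ -- which forces you to identify $\Cl_{\QQ}(X')$ with the N\'eron--Severi space, hence to argue that finite generation of $\Cl(X')$ (obtained from the excision sequence) kills $\Pic^0(X')$ -- then show $\phi_*(\Mov(X')) \subseteq \Mov(X)$ for the entire moving cone, and conclude by surjectivity of $\phi_*$. Your route yields stronger, reusable statements (the pushforward of the whole moving cone lands in the moving cone, and $\phi_*$ is surjective on rational class groups), at the cost of the extra input on $\Pic^0$; the paper's choice of divisors adapted to generators of $\Cl(X)$ buys economy, since movability need only be checked for finitely many very ample classes and full-dimensionality downstairs is immediate. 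One small remark: your appeal to Lemma~\ref{lem:pol2mov} at the end of the core step is unnecessary, since ``the stable base locus contains no divisor'' is precisely the paper's definition of a movable class.
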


\begin{proof}
Using Chow's Lemma and resolution of 
singularities, we obtain a birational 
morphism $\pi \colon X' \to X$ with a 
smooth projective variety $X'$.
Let $D_1, \ldots, D_r \in \WDiv(X)$ be 
prime divisors generating $\Cl(X)$,
and consider their proper transforms
$D_1', \ldots, D_r' \in \WDiv(X')$.
Moreover, let $E' \in \CaDiv(X')$ be 
very ample such that all $E'+D_i'$ 
are also very ample, and denote by
$E \in \WDiv(X)$ its push-forward.
Then the classes $E$ and $E+D_i$ generate
a fulldimensional cone 
$\tau \subseteq \Cl_{\QQ}(X)$ 
and, since $E'$ and the $E'+D_i'$ are
movable, we have $\tau \subseteq \Mov(X)$.  
\end{proof}

\begin{proof}[Proof of Theorem~\ref{fingenchar}]
Suppose that~(i) holds.
Then Proposition~\ref{prop:effmov} 
tells us that $\Eff(X)$ is polyhedral.
Let $\mathfrak{F} = (f_1, \ldots, f_r)$ be a 
system of pairwise nonassociated homogeneous 
prime generators of $R := \mathcal{R}(X)$
and set $w_i := \deg(f_i)$.

By~\cite[Prop.~2.2]{Ha2},
the group $H = \Spec \, \KK[\Cl(X)]$ 
acts freely on an open subset 
$W \subseteq \b{X}$ of 
$\b{X} = \Spec \, \mathcal{R}(X)$
such that $\b{X} \setminus W$ 
is of codimension at least two in $\b{X}$.
Thus, we can choose a point $z \in W$ with 
$f_i(z) = 0$ and $f_j(z) \ne 0$ for $j \ne i$.
Consequently, the weights $w_j$, where $j \ne i$,
generate $\Cl(X)$ and hence the system
of generators $\mathfrak{F}$ 
is admissible in the sense 
of~\cite[Def.~3.4]{Ha2}.
Moreover, by Lemma~\ref{movfull}, 
the moving cone
of $X$ is of full dimension, and by 
Proposition~\ref{prop:effmov},
it is given as
\begin{eqnarray*}
\Mov(X)
& = & 
\bigcap_{i=1}^r \cone(w_j; \; j \ne i).
\end{eqnarray*}
Thus, we are in the setting of~\cite[Cor.~4.3]{Ha2}.
That means that $\Mov(X)$ is a union of 
fulldimensional GIT-chambers 
$\lambda_1, \ldots, \lambda_r$,
the relative interiors of which 
are contained in the relative interior
of $\Mov(X)$ and the associated projective 
varieties $X_i := \rq{X}_i \quot H$,
where $\rq{X}_i := \b{X}^{ss}(\lambda_i)$,
are $\QQ$-factorial,
have $\mathcal{R}(X)$ as their Cox ring 
and $\lambda_i$ as their semiample cone.

Moreover, if $q \colon \rq{X} \to X$ and 
$q_i \colon \rq{X}_i \to X_i$ denote the 
associated universal torsors, then 
the desired small birational maps 
$\pi_i \colon X \to X_i$ are obtained as follows.
Let $X' \subseteq X$ and $X_i' \subseteq X_i$ be
the respective sets of smooth points.
Then, by~\cite[Prop.~2.2]{Ha2},
the sets $q^{-1}(X')$ and $q^{-1}(X_i')$ 
have a small complement in $\b{X}$ and 
thus we obtain open embeddings with 
a small complement
$$ 
\xymatrix{
X
&
(q^{-1}(X') \cap q^{-1}(X_i')) \quot H
\ar[l]
\ar[r]
&
X_i.
}
$$

Now suppose that~(ii) holds.
Let $w_1, \ldots, w_d \in \Eff(X)$ be those 
primitive generators of extremal rays of $\Eff(X)$ 
that satisfy $h^0(nw_i) \le 1$ for any $n \in \ZZ_{\ge 0}$
and fix $0 \ne f_i \in  \mathcal{R}(X)_{n_iw_i}$ with 
$n_i$ minimal. Then we have 
\begin{eqnarray*}
\bigoplus_{n \in \ZZ_{\ge 0}} \mathcal{R}(X)_{nw_i}
& = & 
\KK[f_i].
\end{eqnarray*}
Set $\lambda_i := \pi_i^* (\SAmple(X_i))$.
Then, by Gordon's Lemma and~\cite[Lemma~1.8]{hk},
we have another finitely generated subalgebra 
of the Cox ring, namely
$$
\mathcal{S}(X) 
\ := \
\bigoplus_{w \in \Mov(X)} \mathcal{R}(X)_w
\ = \ 
\bigoplus_{i=1}^r
\left(\bigoplus_{w \in \lambda_i} \mathcal{R}(X)_w\right).
$$
We show that $\mathcal{R}(X)$ is generated 
by $\mathcal{S}(X)$ and the 
$f_i \in \mathcal{R}(X)_{n_iw_i}$. 
Consider any $0 \ne f \in \mathcal{R}(X)_w$ 
with $w \not\in \Mov(X)$.
Then, by Lemma~\ref{lem:pol2mov}, we have 
$f = f^{(1)}f_i$ for some $1 \le i \le d$
and some $f^{(1)} \in \mathcal{R}(X)$ 
homogeneous of degree $w(1) := w-n_iw_i$.
If $w(1)  \not \in \Mov(X)$
holds, then we repeat this procedure 
with $f^{(1)}$ and 
obtain $f = f^{(2)} f_if_j$ with 
$f^{(2)}$ homogeneous of degree $w(2)$.
At some point, we must end with 
$w(n) = \deg(f^{(n)}) \in \Mov(X)$,
because otherwise the sequence of the 
$w(n)$'s would leave the 
effective cone.
\end{proof}

\goodbreak

\begin{theorem}
\label{normsurffingen}
Let $X$ be a normal complete surface with 
finitely generated divisor class group 
$\Cl(X)$. 
Then the following statements are
equivalent.
\begin{enumerate}
\item
The Cox ring $\mathcal{R}(X)$ is finitely generated.
\item 
The effective cone $\Eff(X) \subseteq \Cl_\QQ(X)$ 
and the moving cone $\Mov(X) \subseteq \Cl_\QQ(X)$ 
are polyhedral and $\Mov(X) = \SAmple(X)$ holds.
\end{enumerate}
Moreover, if one of these two statements holds, then 
the surface $X$ is $\QQ$-factorial and projective.
\end{theorem}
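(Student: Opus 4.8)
The plan is to deduce this surface statement from the general characterization in Theorem~\ref{fingenchar}, specializing to dimension two and then extracting the additional conclusion that $X$ is $\QQ$-factorial and projective. For the equivalence of (i) and (ii), I would argue as follows. Assume (i). By Theorem~\ref{fingenchar} the effective cone is polyhedral and $\Mov(X)$ is a finite union $\bigcup_i \pi_i^*(\SAmple(X_i))$ over small birational maps $\pi_i \colon X \to X_i$; in particular each $\SAmple(X_i)$ is polyhedral, so $\Mov(X)$ is polyhedral as a finite union of polyhedral cones. The key dimensional input is that on a \emph{surface} a small birational map is an isomorphism: the indeterminacy locus would have codimension at least two, hence be a finite set of points, and a birational map between normal complete surfaces that is an isomorphism away from finitely many points is in fact everywhere defined and an isomorphism. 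Thus each $\pi_i$ is an isomorphism, $X_i \cong X$, and the union collapses to a single piece, giving $\Mov(X) = \SAmple(X)$. This yields (ii).

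For the converse, assume (ii). The hypothesis $\Mov(X)=\SAmple(X)$ means we may take $r=1$ and $\pi_1 = \id_X$ in condition~(ii) of Theorem~\ref{fingenchar}: the single small birational map is the identity, its semiample cone is the polyhedral cone $\SAmple(X)=\Mov(X)$, and the required equality $\Mov(X)=\pi_1^*(\SAmple(X_1))$ holds trivially. Since $\Eff(X)$ is assumed polyhedral as well, both clauses of Theorem~\ref{fingenchar}(ii) are satisfied, and that theorem gives finite generation of $\mathcal{R}(X)$, i.e.\ statement~(i).

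It remains to establish the final assertion that $X$ is $\QQ$-factorial and projective whenever these equivalent conditions hold. Here I would again invoke the last clause of Theorem~\ref{fingenchar}: under finite generation there is a small birational map $X \to X'$ with $X'$ a $\QQ$-factorial projective variety. By the same codimension-two argument used above, a small birational map between normal complete surfaces is an isomorphism, so $X \cong X'$, and therefore $X$ itself is $\QQ$-factorial and projective.

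The main obstacle I anticipate is the surface-specific lemma that \emph{every small birational map of normal complete surfaces is an isomorphism}. This is the one genuinely new ingredient beyond Theorem~\ref{fingenchar}, and it is exactly what forces the union in Theorem~\ref{fingenchar}(ii) to reduce to a single chamber and what upgrades the abstract small modification $X \to X'$ into an honest isomorphism. The argument rests on the fact that codimension at least two in a surface means finitely many points, combined with normality (so that a rational map defined outside a finite set, with normal source, extends) and properness/completeness of both sides; I would make this precise first and then let the rest of the proof fall out formally from Theorem~\ref{fingenchar}.
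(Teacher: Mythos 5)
Your proposal is correct in outline and reaches the theorem by a route that is close in spirit to the paper's but differs in a genuine way in the implementation. For ``(ii)$\Rightarrow$(i)'' you feed $r=1$, $\pi_1=\id_X$ into condition~(ii) of Theorem~\ref{fingenchar}; the paper instead re-runs that theorem's argument with the subalgebra $\bigoplus_{w\in\SAmple(X)}\mathcal{R}(X)_w$ --- the same computation, so no real difference. The divergence is in ``(i)$\Rightarrow$(ii)'': you take the conclusion of Theorem~\ref{fingenchar}, namely $\Mov(X)=\bigcup_i\pi_i^*(\SAmple(X_i))$, and collapse the union using your lemma that a small birational map of normal complete surfaces extends to an isomorphism. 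The paper does not reuse Theorem~\ref{fingenchar} here: it gets polyhedrality from Proposition~\ref{prop:effmov} and proves $\Mov(X)=\SAmple(X)$ by contradiction inside the GIT-chamber structure of~\cite{Ha2}: if the two cones differed, $\Mov(X)$ would contain chambers $\lambda'\subsetneq\lambda$ with $\lambda'$ a proper face of $\lambda$, and the induced proper morphism $Y\to Y'$ of the associated quotient surfaces would be an isomorphism in codimension one, hence an isomorphism, contradicting $\dim\lambda'\ne\dim\lambda$. What each approach buys: the paper's surface-specific input is only the morphism-level fact (a proper birational morphism of normal surfaces contracting no curve is finite, hence an isomorphism by Zariski's main theorem), whereas your route needs the stronger rational-map-level statement; in exchange, your single lemma serves uniformly for the equivalence and for the final $\QQ$-factoriality/projectivity clause, where the paper invokes exactly the same fact (``as $X$ and $X'$ are complete surfaces, this map already defines an isomorphism'') without proof.

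Two caveats on your write-up, both repairable. First, your parenthetical justification of the key lemma --- that ``a rational map defined outside a finite set, with normal source, extends'' --- is false as stated: the projection $\PP_2\dashrightarrow\PP_1$ from a point is defined outside a single point of a smooth surface and does not extend. The correct argument must use smallness: pass to the normalized graph $\Gamma$ with projections $p$, $q$; smallness forces every $p$-contracted curve to be $q$-contracted and conversely, and then uniqueness of the contraction of a fixed configuration of curves (via $p_*\mathcal{O}_\Gamma=\mathcal{O}_X$, $q_*\mathcal{O}_\Gamma=\mathcal{O}_Y$ and connectedness of fibers) identifies $X$ with $Y$ as ringed spaces. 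Since you explicitly flagged this lemma as the step to be made precise, this is a repair of the sketch, not a change of strategy. Second, a finite union of polyhedral cones need not be polyhedral (nor convex), so the inference that $\Mov(X)$ is polyhedral ``as a finite union of polyhedral cones'' is not valid as stated; this is harmless, since polyhedrality of $\Mov(X)$ follows directly from Proposition~\ref{prop:effmov}, or simply from the collapse $\Mov(X)=\pi_1^*(\SAmple(X_1))$ once your lemma is in place.
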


\begin{proof}
We verify the implication ``(i)$\Rightarrow$(ii)''.
By Proposition~\ref{prop:effmov},
we only have to show that 
the moving cone coincides with the 
semiample cone.
Clearly, we have 
$\SAmple(X) \subseteq \Mov(X)$.
Suppose that
$\SAmple(X) \neq \Mov(X)$
holds.
Then $\Mov(X)$ is properly subdivided
into GIT-chambers, see~\cite[Cor.~4.3]{Ha2}.
In particular, we find two chambers
$\lambda'$ and $\lambda$ both intersecting 
the relative interior of $\Mov(X)$ such 
that $\lambda'$ is a proper face of $\lambda$.
The associated GIT-quotients 
$Y'$ and $Y$ of the total coordinate 
space $\b{X}$
have $\lambda'$ and $\lambda$ as their respective 
semiample cones.
Moreover, the inclusion $\lambda' \subseteq \lambda$
gives rise to a proper morphism 
$Y \to Y'$, which is an isomorphism 
in codimension one.
As $Y$ and $Y'$ are normal surfaces, we 
obtain $Y \cong Y'$, which contradicts 
the fact that the semiample cones of $Y$ and $Y'$ 
are of different dimension.

The verification of ``(ii)$\Rightarrow$(i)'' 
runs as in the preceding proof; this time 
one uses the finitely generated subalgebra 
$$
\mathcal{S}(X) 
\ := \
\bigoplus_{w \in \Mov(X)} \mathcal{R}(X)_w
\ = \ 
\bigoplus_{w \in \SAmple(X)} \mathcal{R}(X)_w.
$$
Moreover, by Theorem~\ref{fingenchar}, there is 
a small birational map $X \to X'$ with $X'$ projective 
and $\QQ$-factorial. 
As $X$ and $X'$ are complete surfaces, this map already 
defines an isomorphism.
\end{proof}

In the case of a $\QQ$-factorial surface $X$, 
we obtain the following simpler
characterization involving the cone 
$\Nef(X) \subseteq \Cl_{\QQ}(X)$ 
of numerically effective divisor classes; 
note that the implication ``(ii)$\Rightarrow$(i)'' 
was obtained for smooth surfaces 
in~\cite[Cor.~1]{GalMo}.

\goodbreak

\begin{corollary}
\label{smoothfingen}
Let $X$ be a $\QQ$-factorial projective surface with 
finitely generated divisor class group 
$\Cl(X)$. Then the following statements are
equivalent.
\begin{enumerate}
\item
The Cox ring $\mathcal{R}(X)$ is finitely generated.
\item
The effective cone $\Eff(X) \subseteq \Cl_\QQ(X)$ 
is polyhedral and $\Nef(X) = \SAmple(X)$ holds. 
\end{enumerate}
\end{corollary}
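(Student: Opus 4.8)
The plan is to derive the statement directly from Theorem~\ref{normsurffingen}. Since $X$ is assumed $\QQ$-factorial and projective, that theorem already tells us that assertion~(i) is equivalent to requiring that $\Eff(X)$ and $\Mov(X)$ be polyhedral and that $\Mov(X) = \SAmple(X)$. Hence everything reduces to comparing this with assertion~(ii), and the engine of the comparison will be the chain of inclusions
\[
\mathrm{int}(\Nef(X)) \ \subseteq \ \SAmple(X) \ \subseteq \ \Mov(X) \ \subseteq \ \Nef(X),
\]
valid on any $\QQ$-factorial projective surface, together with the remark that on a surface the closure of $\Eff(X)$ is the cone of curves $\overline{\mathrm{NE}}(X)$, whose dual under the intersection pairing is $\Nef(X)$; thus polyhedrality of $\Eff(X)$ automatically yields polyhedrality of $\Nef(X)$.

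First I would establish the three inclusions. The middle one, $\SAmple(X) \subseteq \Mov(X)$, is immediate, since a class with a basepoint-free multiple has empty, hence codimension $\ge 2$, stable base locus. The right-hand one, $\SAmple(X) \subseteq \Nef(X)$, is equally standard. For the left-hand inclusion I would invoke Kleiman's ampleness criterion to identify $\mathrm{int}(\Nef(X))$ with the ample cone, and note that an ample class has a very ample, hence basepoint-free, multiple, so lies in $\SAmple(X)$.

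The step I expect to be the main obstacle, and the only one genuinely using $\QQ$-factoriality, is the inclusion $\Mov(X) \subseteq \Nef(X)$; $\QQ$-factoriality is exactly what makes the intersection numbers $w \mal C$ well defined for an arbitrary class $w$ and curve $C$. Given $w \in \Mov(X)$, its stable base locus is a finite set of points. For an irreducible curve $C$ this means $C$ is not contained in the stable base locus, so some multiple $nw$ has an effective representative $D'$ not having $C$ as a component; then $n(w \mal C) = D' \mal C \ge 0$, whence $w \mal C \ge 0$. As $C$ is arbitrary, $w \in \Nef(X)$.

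Finally I would assemble the two implications. If assertion~(ii) holds, then the chain squeezes $\Mov(X)$ between $\SAmple(X)$ and $\Nef(X) = \SAmple(X)$, so $\Mov(X) = \SAmple(X) = \Nef(X)$ is polyhedral, and Theorem~\ref{normsurffingen} gives finite generation. Conversely, if $\mathcal{R}(X)$ is finitely generated, then $\Eff(X)$ is polyhedral and $\Mov(X) = \SAmple(X)$ is polyhedral, hence closed; passing to closures in $\mathrm{int}(\Nef(X)) \subseteq \SAmple(X) = \Mov(X)$ and using that the full-dimensional closed cone $\Nef(X)$ equals the closure of its interior gives $\Nef(X) \subseteq \Mov(X) = \SAmple(X) \subseteq \Nef(X)$, that is $\Nef(X) = \SAmple(X)$. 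The only points needing care here are the closedness of $\Mov(X)$, supplied by its polyhedrality, and the identity $\overline{\mathrm{int}(\Nef(X))} = \Nef(X)$ for the full-dimensional closed nef cone.
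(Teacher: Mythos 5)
Your proof is correct, and one half of it coincides with the paper's argument while the other half follows a genuinely different route. For ``(ii)$\Rightarrow$(i)'' you do exactly what the paper does: use $\SAmple(X) \subseteq \Mov(X) \subseteq \Nef(X)$ to squeeze $\Mov(X) = \SAmple(X) = \Nef(X)$, get polyhedrality of $\Nef(X)$ by dualizing the polyhedral cone $\Eff(X)$, and feed this into Theorem~\ref{normsurffingen}. For ``(i)$\Rightarrow$(ii)'', however, the paper simply cites an external result (\cite[Cor.~7.4]{BeHa2}) for the equality $\SAmple(X) = \Nef(X)$, whereas you derive it internally: Theorem~\ref{normsurffingen} gives $\Mov(X) = \SAmple(X)$ polyhedral (hence closed), Kleiman's criterion gives $\operatorname{int}(\Nef(X)) \subseteq \SAmple(X)$, and since the closed full-dimensional cone $\Nef(X)$ is the closure of its interior you conclude $\Nef(X) \subseteq \Mov(X) = \SAmple(X) \subseteq \Nef(X)$. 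Your route buys self-containedness --- no appeal to \cite{BeHa2} --- at the cost of the extra convexity bookkeeping (full-dimensionality of $\Nef(X)$ from projectivity, closedness of polyhedral cones, and some care about rational versus real points when taking closures, which you rightly flag); you also spell out the inclusion $\Mov(X) \subseteq \Nef(X)$ via stable base loci and $\QQ$-factoriality, a step the paper treats as standard and leaves implicit. Both arguments are sound; the paper's is shorter, yours is more elementary and stays inside the paper's own toolkit.
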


\begin{proof}[Proof of Corollary~\ref{smoothfingen}]
If~(i) holds, then we infer from~\cite[Cor.~7.4]{BeHa2}
that the semiample cone and the nef cone of 
$X$ coincide.
Now suppose that~(ii) holds. 
From   
$$
\SAmple(X) 
\ \subseteq \ 
\Mov(X) 
\ \subseteq \ 
\Nef(X)
$$
we then conclude $\Mov(X) = \Nef(X)$. 
Moreover, since $\Eff(X)$ is polyhedral,
$\Nef(X)$ is given by a finite number 
of inequalities and hence is 
also polyhedral.
Thus, we can apply Theorem~\ref{normsurffingen}.
\end{proof}

We turn to K3-surfaces $X$.
Recall that by definition, $X$ is a smooth 
complete complex surface with  $b_1(X) = 0$ and
trivial canonical class.
We always assume a K3-surface $X$ to be {\em algebraic}.
As a sublattice of $H^2(X,\ZZ) \cong \ZZ^{22}$, 
the divisor class group $\Cl(X)$ is 
finitely generated and free. 
In particular, we can define a Cox ring 
$\mathcal{R}(X)$ as above.
Our first result characterizes finite 
generation of $\mathcal{R}(X)$.

\goodbreak

\begin{theorem}
\label{k3fingen}
For any complex algebraic K3-surface~$X$, 
the following statements are equivalent.
\begin{enumerate}
\item
The Cox ring $\mathcal{R}(X)$ is finitely 
generated.
\item
$\Eff(X)$ is polyhedral.
\end{enumerate}
\end{theorem}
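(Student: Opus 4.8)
The plan is to treat the two implications very asymmetrically, since one of them is already essentially done. The implication ``(i)$\Rightarrow$(ii)'' follows immediately from Proposition~\ref{prop:effmov}: if $\mathcal{R}(X)$ is finitely generated, then $\Eff(X)$ is the positive hull of the degrees of a finite system of homogeneous prime generators, hence polyhedral. So no work is needed in this direction.

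For the converse ``(ii)$\Rightarrow$(i)'', I would reduce everything to Corollary~\ref{smoothfingen}. A K3-surface is by definition smooth and projective, hence $\QQ$-factorial, and its divisor class group $\Cl(X) = \Pic(X)$ is free of finite rank; thus $X$ meets the standing hypotheses of that corollary. Since we are assuming $\Eff(X)$ polyhedral, it only remains to verify the equality $\Nef(X) = \SAmple(X)$ of cones in $\Cl_{\QQ}(X)$ — the polyhedrality of $\Nef(X)$ then being automatic from that of $\Eff(X)$, exactly as in the proof of the corollary. Feeding this into Corollary~\ref{smoothfingen} yields finite generation of $\mathcal{R}(X)$.

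The heart of the matter is therefore the identity $\Nef(X) = \SAmple(X)$, which in fact holds for every K3-surface, independently of any polyhedrality assumption. The inclusion $\SAmple(X) \subseteq \Nef(X)$ is formal. For the reverse inclusion I would use that under hypothesis (ii) the cone $\Nef(X)$ is polyhedral, hence generated by finitely many integral ray generators, so it suffices to show that every nef integral class $w \in \Cl(X)$ is semiample, i.e., has a base-point free positive multiple. On a surface a nef class satisfies $w^2 \ge 0$, and I would split into cases according to $w^2$: if $w = 0$ there is nothing to prove; if $w^2 > 0$, then $w$ is nef and big and the classical theory of linear systems on K3-surfaces shows that a suitable multiple of $w$ is base-point free; and if $w^2 = 0$ with $w \ne 0$, then $w$ is a positive multiple of the fibre class of an elliptic pencil and is again semiample by the same theory. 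In every case $w$ lies in $\SAmple(X)$, giving $\Nef(X) \subseteq \SAmple(X)$ and hence the desired equality.

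The main obstacle is precisely this last step, namely proving that nef divisors on K3-surfaces are semiample. This is not formal: it rests on the classical structure theory of linear systems on K3-surfaces, in particular the vanishing theorems and base-point freeness results (in the tradition of Saint-Donat and Mayer) that control $|nw|$ through the intersection behaviour of $w$ with the $(-2)$-classes and with elliptic pencils. Once this geometric input is granted, the remainder of the argument is purely a matter of matching hypotheses and invoking Corollary~\ref{smoothfingen}.
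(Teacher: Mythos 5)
Your proposal is correct and follows essentially the same route as the paper: both reduce ``(ii)$\Rightarrow$(i)'' to Corollary~\ref{smoothfingen} by verifying $\Nef(X)=\SAmple(X)$, splitting nef integral classes into the cases $w^2>0$ (nef and big, base-point freeness of a multiple after Mayer) and $w^2=0$. The only difference is one of detail: where you cite the classical fact that a nonzero nef class of square zero is a multiple of an elliptic fibre class, the paper proves this step explicitly via a fixed-part analysis using its Lemma~\ref{negative} and Saint-Donat's base-point freeness criterion, after first invoking Kleiman's theorem together with the closedness of the polyhedral cone $\Eff(X)$ to see that nef classes are effective.
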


\begin{lemma}
\label{negative}
Let $X$ be a K3-surface and $D$ be a non-principal divisor
on $X$.
If we have $h^0(D)=1$, then  $D^2<0$ holds.
\end{lemma}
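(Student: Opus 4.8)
The plan is to run Riemann--Roch on the K3-surface $X$, combined with Serre duality and the triviality of the canonical class, and to feed in the hypothesis $h^0(D) = 1$ to force the self-intersection negative. The only genuinely substantive point will be the vanishing of $h^2(D)$.

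First I would record the Riemann--Roch formula in this setting. Since $K_X$ is trivial and $h^0(\mathcal{O}_X) = 1$, $h^1(\mathcal{O}_X) = 0$, $h^2(\mathcal{O}_X) = 1$, we have $\chi(\mathcal{O}_X) = 2$, so for any divisor $D$ on $X$
\[
\chi(D) \ = \ \chi(\mathcal{O}_X) + \frac{1}{2}D(D - K_X) \ = \ 2 + \frac{1}{2}D^2.
\]
Next I would pin down the individual cohomology groups. The hypothesis $h^0(D) = 1$ yields a nonzero section, so $D$ is linearly equivalent to an effective divisor, and since $D$ is non-principal this effective representative is nonzero. By Serre duality and $K_X \sim 0$ we have $h^2(D) = h^0(-D)$, and I claim this vanishes: were $-D$ effective as well, then summing a nonzero effective representative of $D$ with an effective representative of $-D$ would give a nonzero effective divisor linearly equivalent to $0$, which is impossible on the complete surface $X$, where the only effective divisor in the principal class is the zero divisor. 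Hence $h^2(D) = h^0(-D) = 0$.

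Putting the pieces together, I would compute
\[
\chi(D) \ = \ h^0(D) - h^1(D) + h^2(D) \ = \ 1 - h^1(D) \ \le \ 1,
\]
and comparing this with the Riemann--Roch expression $\chi(D) = 2 + \tfrac{1}{2}D^2$ gives $2 + \tfrac{1}{2}D^2 \le 1$, that is $D^2 \le -2 < 0$, as desired (in fact one even obtains the sharper bound $D^2 \le -2$).

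The statement carries no serious obstacle; the whole argument is a short Riemann--Roch computation. The one step demanding care is the vanishing $h^2(D) = 0$, i.e.\ that $-D$ cannot be effective once $D$ is a nonzero effective class. This rests entirely on the elementary fact that the sole effective divisor in the principal class on a complete variety is the zero divisor, together with the non-principality hypothesis, which is exactly what prevents $D$ from being trivial.
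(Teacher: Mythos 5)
Your proof is correct and follows exactly the paper's argument: Serre duality with $K_X \sim 0$ gives $h^2(D) = h^0(-D) = 0$, and Riemann--Roch then forces $2 + \tfrac{1}{2}D^2 \le 1$, i.e.\ $D^2 \le -2$. The only difference is that you spell out the (easy) justification that $h^0(-D)=0$, which the paper leaves implicit.
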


\begin{proof}
Since the canonical divisor of $X$ 
is principal, Serre's duality theorem 
gives us $h^2(D) = h^0(-D) = 0$.
The Riemann-Roch theorem then yields
$1 \geq D^2/2+2$.
The assertion follows.
\end{proof}

\begin{proof}[Proof of Theorem~\ref{k3fingen}]
Only for ``(ii)$\Rightarrow$(i)'' there is 
something to show.
So, assume that $\Eff(X)$ is 
polyhedral.
Then, by Corollary~\ref{smoothfingen}, 
we have to 
show that for every numerically effective 
divisor $D$ on $X$, 
some positive multiple is semiample.

By a result of Kleiman, the class 
$[D] \in \Cl_{\QQ}(X)$ 
lies in the closure of the cone of 
ample divisor classes. 
Since any ample class is effective
and $\Eff(X)$ as a polyhedral cone is 
closed in $\Cl_{\QQ}(X)$, we obtain 
$[D] \in \Eff(X)$.
Thus, we may assume that $D$ is
non-negative. 

Since $D$ is numerically effective, 
we have $D^2 \ge 0$. 
If $D^2 > 0$ holds, then~\cite[Cor., p.~11]{Ma}
tells us that the linear system~$\vert 3D \vert$ 
is base point free, i.e., that $3D$ is 
semiample.
If we have $D^2 = 0$, then we 
write $D = D_0 + D_1$,
where $D_0$ denotes the fixed part of $D$ 
and $D_0,D_1$ are non-negative.
Then we have
$$ 
0 
\quad = \quad
D^2 
\quad = \quad
D \mal D_0 + D \mal D_1.
$$
Since $D_0$ and $D_1$ are non-negative, 
we conclude $D \mal D_0 = 0$ and 
$D \mal D_1 = 0$.
We show that $D_0=0$ must hold.
Otherwise, using Lemma~\ref{negative}, 
we obtain
$$ 
D_1^2
\quad = \quad
(D - D_0)^2 
\quad < \quad
0.
$$
On the other hand $D_1$ has no fixed 
components. Thus, according to~\cite[Cor.~3.2]{SD},
the divisor $D_1$ is base point free and
hence numerically effective. 
A contradiction.
Thus, we see that $D = D_1$ holds, and 
thus $D$ is semiample.
\end{proof}

\begin{corollary}
Let $X$ be a K3-surface. 
If the cone 
$\Eff(X) \subseteq \Cl_{\QQ}(X)$ 
of effective divisor classes 
is polyhedral,
then also the cone 
$\SAmple(X) \subseteq \Cl_{\QQ}(X)$ 
of semiample divisor classes is 
polyhedral.
\end{corollary}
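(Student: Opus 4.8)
The plan is to chain together the results already established for K3-surfaces and for $\QQ$-factorial projective surfaces. The starting observation is that a K3-surface is by definition smooth, complete and algebraic, hence in particular $\QQ$-factorial and projective, so that both Theorem~\ref{k3fingen} and Corollary~\ref{smoothfingen} are available for it.

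First I would use the hypothesis that $\Eff(X)$ is polyhedral together with the implication ``(ii)$\Rightarrow$(i)'' of Theorem~\ref{k3fingen} to conclude that the Cox ring $\mathcal{R}(X)$ is finitely generated. With finite generation in hand, and $X$ being $\QQ$-factorial projective, I would invoke the implication ``(i)$\Rightarrow$(ii)'' of Corollary~\ref{smoothfingen}, which yields the equality $\Nef(X) = \SAmple(X)$ of the nef and the semiample cones.

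It then remains to see that $\Nef(X)$ is polyhedral, and here I would argue exactly as in the proof of Corollary~\ref{smoothfingen}. On a surface the nef cone is the dual, with respect to the intersection form, of the cone of curves, and on a surface the latter spans the same cone as the effective classes $\Eff(X)$; since the intersection form on a K3-surface is non-degenerate, this dual cone indeed lives in $\Cl_{\QQ}(X)$. A finite set of generators of the polyhedral cone $\Eff(X)$ thus cuts out $\Nef(X)$ by finitely many linear inequalities, so $\Nef(X)$ is polyhedral. Combining this with $\SAmple(X) = \Nef(X)$ shows that $\SAmple(X)$ is polyhedral.

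I do not expect any serious obstacle, as the corollary is essentially a formal consequence of the preceding results. The only point deserving a word of care is the passage from polyhedrality of $\Eff(X)$ to polyhedrality of $\Nef(X)$: one must use that on a surface effective divisor classes and curve classes span the same cone, so that $\Nef(X)$ really is the dual cone of $\Eff(X)$, and that dualizing preserves polyhedrality.
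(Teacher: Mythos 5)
Your proof is correct and follows essentially the route the paper intends: the corollary is stated without proof as an immediate consequence of Theorem~\ref{k3fingen} combined with Corollary~\ref{smoothfingen}, and your chain (polyhedral $\Eff(X)$ $\Rightarrow$ finitely generated Cox ring $\Rightarrow$ $\Nef(X)=\SAmple(X)$, plus polyhedrality of $\Nef(X)$ as the cone cut out by the finitely many inequalities coming from the generators of $\Eff(X)$) is exactly the argument pattern already used in the paper's proof of Corollary~\ref{smoothfingen}. The only cosmetic difference is that one could shortcut via Theorem~\ref{normsurffingen}, whose implication (i)$\Rightarrow$(ii) gives $\Mov(X)=\SAmple(X)$ polyhedral directly, but this changes nothing of substance.
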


For Enriques surfaces, i.e., smooth projective 
surfaces $X$ with $q(X) = 0$ and 
$2K_X$ trivial but $K_X$ 
nontrivial,
we obtain the following analogue 
of Theorem~\ref{k3fingen}.

\begin{theorem}
\label{enriquesfingen}
For any Enriques surface~$X$,
the following statements are equivalent.
\begin{enumerate}
\item
The Cox ring $\mathcal{R}(X)$ is finitely
generated.
\item
$\Eff(X)$ is polyhedral.
\end{enumerate}
\end{theorem}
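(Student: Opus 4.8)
The plan is to mirror the proof of Theorem~\ref{k3fingen} and reduce the statement to the surface criterion of Corollary~\ref{smoothfingen}. An Enriques surface $X$ is smooth, projective, and $\QQ$-factorial, with $\Cl(X)$ finitely generated, so the corollary applies: since one implication is formal, the real content is ``(ii)$\Rightarrow$(i)''. Assuming $\Eff(X)$ is polyhedral, I would show that $\Nef(X) = \SAmple(X)$, i.e.\ that every nef divisor class on $X$ is semiample. As in the K3 case, Kleiman's criterion places any nef class in the closure of the ample cone, and polyhedrality (hence closedness) of $\Eff(X)$ gives that a nef $D$ is effective; thus I may assume $D$ is non-negative, and numerical effectiveness yields $D^2 \ge 0$.

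The case $D^2 > 0$ should again follow from a base-point-freeness result: for Enriques surfaces there is a classical statement (analogous to the one cited from~\cite{Ma} for K3-surfaces) guaranteeing that a suitable multiple of a nef divisor with positive self-intersection is base point free, so some positive multiple of $D$ is semiample. The delicate case is $D^2 = 0$. Here I would decompose $D = D_0 + D_1$ into fixed and moving parts and use $0 = D^2 = D\mal D_0 + D\mal D_1$ together with non-negativity to force $D\mal D_0 = D\mal D_1 = 0$, exactly as in the proof of Theorem~\ref{k3fingen}. The goal is to conclude that the fixed part $D_0$ vanishes, so that $D = D_1$ is free of fixed components and therefore, by an Enriques-surface analogue of~\cite[Cor.~3.2]{SD}, base point free.

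The main obstacle is that the two lemmas invoked in the K3 argument, namely Lemma~\ref{negative} and the Saint-Donat-type base-point-freeness result~\cite[Cor.~3.2]{SD}, both rest on the triviality of the canonical class, which fails for Enriques surfaces since $K_X$ is non-trivial (only $2K_X$ is). Thus the hard part is to re-derive the analogue of Lemma~\ref{negative}: if $D$ is a non-negative divisor with $h^0(D) = 1$, one wants $D^2 < 0$. The clean way is to pass to the K3 universal cover $\pi\colon \t{X} \to X$, which is an \'etale double cover with $\t{X}$ a K3-surface; pulling back $D$ and controlling how sections and self-intersections behave under $\pi$ (with $(\pi^*D)^2 = 2D^2$) lets one transport the numerical conclusions from the already-established K3 case. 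One must check that effectivity and the relevant freeness statements descend, and that $h^0(\pi^*D)$ is controlled by $h^0(D)$ together with the $K_X$-twist $h^0(D + K_X)$; handling this twist is where the argument genuinely differs from the K3 case and requires care. Once these lattice- and cohomology-level facts are in place, the geometric argument concludes verbatim as in Theorem~\ref{k3fingen}, giving $\Nef(X) = \SAmple(X)$ and hence, via Corollary~\ref{smoothfingen}, finite generation of $\mathcal{R}(X)$.
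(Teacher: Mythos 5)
There is a genuine gap, and it sits exactly where you locate the ``care'' needed: the analogue of Lemma~\ref{negative} that your plan aims to re-derive is \emph{false} for Enriques surfaces. If $F$ is a half-fiber of an elliptic fibration on $X$ (every Enriques surface carries elliptic fibrations, each with two multiple fibers $2F$, $2F'$), then $F$ is a nonzero effective, even nef, divisor with $h^0(F)=1$ and $F^2=0$. Riemann--Roch only gives $h^0(D) \ge D^2/2+1$ for nonzero effective $D$ (Serre duality kills $h^2$ because $K_X$ is numerically trivial), hence $D^2 \le 0$, and equality genuinely occurs. Passing to the K3 cover cannot rescue the statement: for a half-fiber one has $h^0(\pi^*F) = h^0(F) + h^0(F+K_X) = 2$, since $F+K_X \sim F'$ is the other half-fiber, so $\pi^*F$ moves in the elliptic pencil on the cover --- precisely the $K_X$-twist you mention destroys the transport of ``$h^0=1$''. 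Consequently the ``verbatim'' conclusion of the K3 argument also fails: for $D=F$ the fixed part is $D_0 = F \neq 0$ and $D$ itself is \emph{not} base point free; only $2D$ is. So no correct argument can end with ``$D_0=0$, hence $D$ is base point free''; at best one can show that a multiple of $D$ is.

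The paper's proof avoids running the fixed-part analysis on $X$ altogether: it pulls back $D$ to the K3 universal cover $\pi \colon S \to X$, applies the argument of Theorem~\ref{k3fingen} \emph{on the cover} to conclude that $\pi^*(nD)$ is semiample, and then descends via $H^0(S,\pi^*nD) = \pi^*H^0(X,nD) + \pi^*H^0(X,nD+K_X)$, see~\cite[Lemma 17.2]{BPV}: if $x$ were a base point of $nD$, some $g \in H^0(X,nD+K_X)$ satisfies $g(x) \neq 0$ (otherwise $\pi^{-1}(x)$ would lie in the base locus of $\pi^*nD$), and since $2K_X$ is trivial, $g^2$ is a section of $2nD$ not vanishing at $x$; hence $2nD$ is base point free. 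Your proposal contains the right ingredients --- the \'etale K3 cover and the section decomposition involving the $K_X$-twist --- but deploys them toward a false lemma and a false endpoint. Redirected as above (apply the K3 theorem on the cover, then descend semiampleness of a multiple rather than base-point-freeness of $D$ itself), the proof goes through; note also that your $D^2>0$ case leans on an unproved ``classical statement'' for Enriques surfaces, which becomes unnecessary since the covering argument handles all nef $D$ uniformly.
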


\begin{proof}
Only for ``(ii)$\Rightarrow$(i)'' there is
something to show.
So, assume that $\Eff(X)$ is
polyhedral.
Then, by Corollary~\ref{smoothfingen},
we have to
show that for every given 
numerically effective divisor $D$ 
on $X$,
some positive multiple is semiample.
Since $\Eff(X)$ is polyhedral, we 
obtain $\Nef(X) \subseteq \Eff(X)$
and hence we may assume that $D$ is 
nonnegative.

Let $\pi \colon S \to X$ be the universal
covering.
Then $S$ is a K3-surface and $\pi$ is an 
unramified double covering.
The pullback $\pi^*D$ on $S$ is effective and 
numerically effective.
As in the proof of Theorem~\ref{k3fingen}, we 
see that some positive multiple $\pi^*nD$ 
is semiample. 
From~\cite[Lemma 17.2]{BPV} we infer
\begin{eqnarray*}
 H^0(S,\pi^*nD) 
& = & 
\pi^*H^0(X,nD) + \pi^*H^0(X,nD+ K_X).
\end{eqnarray*}
If $x \in X$ is a base point of $nD$,
then there is a $g\in H^0(X,nD+ K_X)$ such
that $g(x)\neq 0$ holds; otherwise $\pi^{-1}(x)$
would be in the base locus of $\pi^*nD$,
which is a contradiction.
Since $2 K_X$ is trivial, we deduce that $2nD$
has no base points.
\end{proof}

We conclude the section with recalling
some classical statements 
on algebraic K3-surfaces $X$
characterizing the case of 
a polyhedral effective cone 
and thus providing further 
criteria for finite generation of the 
Cox ring.
Consider the lattice $\Cl(X) = \Pic(X)$ 
with the intersection pairing,
denote by $\operatorname{O}(\Cl(X))$ 
the group of its isometries and 
by $W(\Cl(X))$ the Weyl group,
i.e., the subgroup generated by reflections 
with respect to $\delta\in \Cl(X)$ with 
$\delta^2=-2$. 

\begin{theorem}
\label{k3eff}
See~\cite[Theorem 2, Remark 7.2]{k} and~\cite[\S 7, Corollary]{pss}.
For any algebraic K3-surface $X$, 
the following statements are equivalent.
\begin{enumerate}
\item 
The cone $\Eff(X) \subseteq \Cl_\QQ(X)$ 
is polyhedral.
\item 
The set $\operatorname{O}(\Cl(X))/W(\Cl(X))$ 
is finite.
\item 
The automorphism group $\Aut(X)$ is finite.
\end{enumerate}
Moreover, if the Picard number 
is at least three, then~(i) is  
equivalent to the property that $X$ contains
only finitely many smooth rational curves. 
In this case, the classes of such curves 
generate the effective cone.
\end{theorem}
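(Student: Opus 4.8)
The plan is to pass from the geometry of $X$ to the arithmetic of the lattice $N := \Cl(X) = \Pic(X)$, which by the Hodge index theorem has signature $(1,\varrho(X)-1)$, and to route all three equivalences through the reflection group $W := W(\Cl(X))$. First I would fix the positive cone $\mathcal{C}^+ \subseteq \Cl_{\RR}(X)$, the connected component of $\{x : x \mal x > 0\}$ containing the ample classes. Standard K3 theory (Riemann--Roch together with the fact that a nef class of positive square is semiample, exactly as in the proof of Theorem~\ref{k3fingen}) shows that the walls of $\Nef(X)$ inside $\mathcal{C}^+$ are precisely the hyperplanes $\delta^\perp$ for classes $\delta$ of smooth rational $(-2)$-curves, that $\Nef(X)$ is a fundamental domain for $W$ acting on $\mathcal{C}^+$, and that $W$ acts simply transitively on the chambers cut out by these walls. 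Since every isometry permutes the $(-2)$-classes, $W$ is normal in $\operatorname{O}(\Cl(X))$, and simple transitivity yields a splitting $\operatorname{O}^+(N) = W \rtimes \Gamma$, where $\operatorname{O}^+$ is the index-two subgroup preserving $\mathcal{C}^+$ and $\Gamma := \operatorname{Stab}_{\operatorname{O}^+(N)}(\Nef(X))$ is the group of orientation-preserving isometries fixing the nef chamber. Thus $\operatorname{O}(\Cl(X))/W(\Cl(X))$ is finite if and only if $\Gamma$ is finite; this is the common hinge for (i), (ii) and (iii).

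For the equivalence (ii)$\Leftrightarrow$(iii) I would invoke the Torelli theorem~\cite{pss}: the natural representation $\Aut(X) \to \operatorname{O}(\Cl(X))$ has finite kernel, and its image is commensurable with $\Gamma$, because an automorphism is exactly an isometry preserving both the Hodge structure and the ample cone. Hence $\Aut(X)$ is finite precisely when $\Gamma$ is, that is, precisely when (ii) holds.

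For (i)$\Leftrightarrow$(ii) I would argue both directions through $\Gamma$ and the shape of $\Nef(X)$, using that $\Eff(X)$ and $\Nef(X)$ are dual cones, so one is polyhedral if and only if the other is. If $\Eff(X)$ is polyhedral, then $\Gamma$ permutes the finitely many extremal rays of the full-dimensional cone $\Nef(X)$; the kernel of this action fixes $\varrho(X)$ independent rays and so is trivial, whence $\Gamma$ embeds into a finite symmetric group and (ii) follows. Conversely, if $\Gamma$ is finite then $W$ has finite index in $\operatorname{O}(\Cl(X))$, and by Vinberg's theory of hyperbolic reflection groups the fundamental chamber $\Nef(X) \cap \mathcal{C}^+$ then has finite volume; since a finite-volume convex polyhedron in hyperbolic space has only finitely many facets, $\Nef(X)$, and therefore $\Eff(X)$, is polyhedral.

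The main obstacle is this last implication (ii)$\Rightarrow$(i): translating finiteness of $\operatorname{O}(\Cl(X))/W(\Cl(X))$ into finite covolume of the Coxeter chamber, and thence into finitely many facets, is exactly where Vinberg's hyperbolic reflection group input (and, on the geometric side, the dichotomy of~\cite{k}) is needed. It is also where the small Picard number cases must be checked by hand: in rank at most two the cone is automatically spanned by two rays, and one must separate the isotropic case (two rational isotropic extremal rays, $\Gamma$ finite) from the anisotropic case (infinitely many $(-2)$-curves accumulating at irrational boundary rays, $\Gamma$ infinite, $\Eff(X)$ not polyhedral). Finally, for the supplementary statement in Picard number at least three I would again use~\cite{k}: when $\Eff(X)$ is polyhedral its extremal rays cannot be isotropic, since an isotropic extremal ray would be a limit of infinitely many $(-2)$-curve walls, so each extremal ray is generated by a smooth rational curve; there are then finitely many such curves and they generate $\Eff(X)$, while the reverse implication is immediate.
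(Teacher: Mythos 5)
The paper offers no proof of Theorem~\ref{k3eff} at all: it is quoted from the literature, with references to \cite{k} and \cite{pss}. So the only meaningful comparison is with the classical proofs in those references, and your architecture — the $(-2)$-hyperplane chamber structure on the positive cone, the decomposition $\operatorname{O}^+(\Cl(X)) = W \rtimes \Gamma$ with $\Gamma$ the stabilizer of the nef chamber, the Torelli theorem for (ii)$\Leftrightarrow$(iii), and Vinberg's finite-covolume theory for (ii)$\Rightarrow$(i) — is exactly their route. For Picard number at least three the outline is sound, granted the standard inputs you name (Borel--Harish-Chandra so that $\operatorname{O}(\Cl(X))$ has finite covolume, and finiteness of the set of facets of a finite-volume Coxeter polyhedron).

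There is, however, a concrete error in the rank-two case, which you correctly isolate as needing a hand check but then describe wrongly. The dichotomy is not isotropic versus anisotropic; by Theorem~\ref{ne}(i) it is whether $\Cl(X)$ contains a class of square $0$ or $-2$. If $\Cl(X)$ is anisotropic but contains a $(-2)$-class, then (by the Pell-equation mechanism) it contains infinitely many $(-2)$-classes and $W$ is infinite dihedral; but the nef chamber is then a single interval bounded by two \emph{rational} walls $\delta^\perp$, so $\Eff(X)$ is polyhedral, $\Gamma$ is finite (it injects into the permutations of the two walls, by your own argument), and $\operatorname{O}(\Cl(X))/W$ is finite. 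The paper's own example with $w_1^2 = w_2^2 = -2$ and $w_1 \cdot w_2 = 3$, treated in Propositions~\ref{genpic2geneff} and~\ref{gens-2-2}, is precisely such an anisotropic lattice with polyhedral effective cone and finite automorphism group, so your parenthetical ``anisotropic $\Rightarrow \Gamma$ infinite, $\Eff(X)$ not polyhedral'' misclassifies it; the genuinely non-polyhedral rank-two case is anisotropic \emph{with no $(-2)$-classes} (then $W$ is trivial while $\operatorname{O}(\Cl(X))$ is infinite). A secondary gap: in the supplement for $\varrho(X) \ge 3$, the ``reverse implication'' is not immediate. ``Finitely many smooth rational curves'' includes the case of none at all, where the dichotomy of \cite{k} gives $\Eff(X)$ equal to the round, non-polyhedral closure of the positive cone (such Picard lattices exist, e.g.\ even lattices all of whose values are divisible by $4$); so this direction needs at least one $(-2)$-curve together with Kov\'acs' dichotomy, and deserves the same care as the forward one.
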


The hyperbolic lattices 
satisfying~(ii) 
have been classified in~\cite{pss} 
and a series of papers by V.V.~Nikulin,
see~\cite{n1}, \cite{n2} and~\cite{n3}. 
In particular it has been proved that 
there are only finitely many of them 
having rank at least three. 
The results of these papers, 
together with Theorem~\ref{k3eff}, 
give the following.

\begin{theorem}
\label{ne}
See~\cite{n1,n2,n3}.
Let $X$ be an algebraic K3-surface with 
Picard number $\varrho(X)$. 
\begin{enumerate}
\item 
Suppose that $\varrho(X)=2$ holds.
Then $\Eff(X)$ is polyhedral if and only 
if $\Cl(X)$ contains a class of
self-intersection $0$ or $-2$.
\item 
Suppose that $\varrho(X)\geq 3$ holds.
Then $\Eff(X)$ is polyhedral 
if and only if $\Cl(X)$ belongs to a 
finite list of hyperbolic lattices. 
The following table gives the number $n$ 
of these lattices for any Picard number:
\begin{center} 
\begin{tabular}{c|ccccccccccc}
$\varrho(X)$& $3$ & $4$ & $5-6$ & $7$ & $8$& $9$ 
& $10$ & $11- 12$& $13-14$ & $15-19$ & $20$   \\
\midrule
$n$ & $27$ &$17$ & $10$ & $9$ & $12$ & $10$ & $9$ & $4$ & $3$ & $1$ & $0$\\
\end{tabular}
\end{center}
\end{enumerate}
 \end{theorem}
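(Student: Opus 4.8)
The plan is to reduce both statements to arithmetic of the hyperbolic lattice $S:=\Cl(X)$ and then feed in the cited classification. By Theorem~\ref{k3eff}, for any algebraic K3-surface $X$ the cone $\Eff(X)$ is polyhedral if and only if $\operatorname{O}(\Cl(X))/W(\Cl(X))$ is finite; so in each part I would replace ``$\Eff(X)$ polyhedral'' by this finiteness condition and argue entirely inside $S$, which has signature $(1,\varrho(X)-1)$. To see that the numbers in the table are actually attained, I would also invoke the surjectivity of the period map together with Nikulin's results on primitive embeddings into the K3 lattice $U^{3}\oplus E_8(-1)^2$: these guarantee that every hyperbolic lattice of rank at most $20$ occurring in the classification is realized as the Picard lattice $\Cl(X)$ of some algebraic K3-surface, so the equivalences are not vacuous and the counts are sharp.

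For part~(i) I would settle the rank-two case by hand, distinguishing whether $S$ represents $0$ or $-2$. If $S$ represents neither, then $S$ is indefinite and anisotropic over $\QQ$, so it has no $(-2)$-roots and $W(S)$ is trivial, while the orthogonal group of an indefinite anisotropic binary lattice is infinite (the fundamental automorph has infinite order, by the classical Pell--Dirichlet unit theorem); hence $\operatorname{O}(S)/W(S)$ is infinite. If $S$ represents $0$, then up to sign there are exactly two primitive isotropic classes, they span $S_\QQ$, and $\operatorname{O}(S)$ permutes them faithfully, forcing $\operatorname{O}(S)$ to be finite. Finally, if $S$ represents $-2$ but not $0$, then $S$ is again anisotropic, so $\operatorname{O}(S)$ is infinite and carries a single $(-2)$-root to infinitely many; modulo scaling, $W(S)$ then acts on the positive-cone sector as a cocompact reflection group whose chambers accumulate only at the two irrational boundary rays, so $\operatorname{O}(S)/W(S)$ is the finite symmetry group of a fundamental chamber. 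In all three cases Theorem~\ref{k3eff} converts finiteness of $\operatorname{O}(S)/W(S)$ into polyhedrality of $\Eff(X)$, which gives the stated equivalence.

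For part~(ii), with $\varrho(X)\ge 3$, I would quote the arithmetic classification directly. By the reduction above, $\Eff(X)$ is polyhedral exactly when $S$ is a hyperbolic lattice with $\operatorname{O}(S)/W(S)$ finite, equivalently when the fundamental polyhedron of $W(S)$ has finite volume and finitely many faces with finite symmetry group. Nikulin and Piatetski-Shapiro--Shafarevich classified precisely such lattices; in particular only finitely many of them have rank at least three, and running through their lists rank by rank produces the counts recorded in the table, establishing both the finiteness of the list and the precise value of $n$ for each Picard number.

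The genuinely hard input is the rank $\ge 3$ classification of hyperbolic lattices with finite $\operatorname{O}/W$, which is the deep arithmetic work of Nikulin (\cite{n1},\cite{n2},\cite{n3}) together with~\cite{pss}; I would not reprove it but cite it, so that the only original steps are the translation via Theorem~\ref{k3eff}, the elementary rank-two dichotomy, the realizability remark that makes the counts sharp, and the bookkeeping needed to read off the entries of the table.
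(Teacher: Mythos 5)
Your proposal is correct, and its skeleton is the same as the paper's: the paper offers no argument of its own for this theorem, but simply combines Theorem~\ref{k3eff} with the classification of hyperbolic lattices having finite $\operatorname{O}/W$ from~\cite{pss} and Nikulin's papers~\cite{n1,n2,n3}, which is exactly what you do for part~(ii). Where you genuinely diverge is part~(i): instead of citing~\cite{pss} for the rank-two case, you re-derive it from the arithmetic of binary forms, and this argument holds up. Your trichotomy is sound: if $S=\Cl(X)$ represents neither $0$ nor $-2$, then $W(S)$ is trivial while $\operatorname{O}(S)$ is infinite by Pell's equation, so $\operatorname{O}(S)/W(S)$ is infinite; if $S$ represents $0$, the two rational isotropic lines are permuted by $\operatorname{O}(S)$ with kernel $\{\pm\id\}$ (so the action is not quite \emph{faithful}, as you assert, but the kernel is finite, which is all you need), whence $\operatorname{O}(S)$ itself is finite; and if $S$ represents $-2$ but not $0$, the orbit of one root under the infinite group $\operatorname{O}(S)$ is infinite because root stabilizers are finite, the resulting set of reflection walls on the line of positive directions modulo scaling is $W(S)$-invariant and therefore unbounded towards both ends, so the chambers are compact intervals and $\operatorname{O}(S)/W(S)$ is the finite symmetry group of a chamber. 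This makes part~(i) self-contained, which the paper does not attempt, and your realizability remark (period map surjectivity plus Nikulin's embedding results) sensibly addresses a point the paper leaves implicit in the cited classification. One caveat: your passing gloss in part~(ii), that finiteness of $\operatorname{O}(S)/W(S)$ is \emph{equivalent} to the fundamental polyhedron of $W(S)$ having finite volume with finitely many faces, is false in rank two (an isotropic rank-two lattice without roots has trivial $W(S)$ and an infinite-volume chamber, yet finite $\operatorname{O}(S)/W(S)$); it is harmless where you invoke it, namely for rank at least three, where $\operatorname{O}(S)$ has finite covolume and the equivalence does hold, but it should not be stated as a general fact.
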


The following statement is a consequence of the results 
mentioned above or of~\cite[Theorem~2]{k}.

\begin{proposition}
\label{ko}
Let $X$ be an algebraic K3-surface 
such that $\Eff(X)$ is polyhedral.
Then the generators of $\Eff(X)$
are described in the following table.
\begin{center}
\begin{tabular}{l|ll}
$\varrho(X)$  & $\Eff(X)$ & Type of generators \\[3pt]
\midrule
$1$ & $\QQ_+[H]$ & ample divisor\\[5pt]
$2$ & $\QQ_+[E_1]+\QQ_+[E_2]$ & $(-2)$ or $(0)$-curves\\[5pt]
$3 - 19$ & $\sum \QQ_+[E_i]$ & $(-2)$-curves
\end{tabular}
\end{center}
\end{proposition}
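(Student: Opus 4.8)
The plan is to split according to the Picard number, since the extreme ranges reduce to facts already at hand and the substance lies in analyzing the extremal rays of $\Eff(X)$ directly. For $\varrho(X)=1$ the assertion is immediate: $\Cl(X)$ has rank one and, $X$ being projective, is generated by an ample class $[H]$; every effective class is then a non-negative multiple of $[H]$, so $\Eff(X)=\QQ_+[H]$ with ample generator.

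\textbf{The extremal-ray lemma (rank two).} The core observation, which also clarifies the higher-rank case, concerns a primitive generator $w\in\Cl(X)$ of an extremal ray of the closed polyhedral cone $\Eff(X)$. Since $\Eff(X)$ is closed, $w$ is effective, hence represented by a non-negative divisor $\sum a_iC_i$ with the $C_i$ irreducible and the $a_i$ positive integers. Because $w$ spans an extremal ray, each class $[C_i]$ must itself lie on the ray $\QQ_+w$; as $w$ is primitive, every $[C_i]$ is a positive integer multiple of $w$, and the identity $w=\sum a_i[C_i]$ forces a single summand with $w=[C]$ for one irreducible curve $C$. I then pin down $C^2$ by two opposing bounds: a big effective class lies in the interior of $\Eff(X)$, so the boundary class $w$ gives $C^2\le 0$; while adjunction on the K3-surface yields $C^2=2p_a(C)-2\ge -2$, and $C^2$ is even. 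Hence $C^2\in\{0,-2\}$, so $C$ is a $(0)$-curve or a $(-2)$-curve (the latter smooth rational). As $\Eff(X)$ is full-dimensional (it contains the nonempty open ample cone), in rank two it is a two-dimensional polyhedral cone with exactly two extremal rays, and both generators are of the stated type.

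\textbf{Higher rank.} For $3\le\varrho(X)\le 19$ I invoke the final part of Theorem~\ref{k3eff}: polyhedrality of $\Eff(X)$ is equivalent to $X$ containing only finitely many smooth rational curves, whose classes then generate $\Eff(X)$. Since the extremal rays of a cone spanned by a finite set are spanned by members of that set, each generator is the class of a smooth rational curve, i.e.\ a $(-2)$-curve. No $(0)$-curve can span an extremal ray here, for it would then be proportional to a $(-2)$-curve, which is incompatible with the comparison of self-intersections; such $(0)$-classes, if present, lie in the interior of $\Eff(X)$.

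\textbf{Main obstacle.} The rank one and higher-rank cases are essentially citations to the projectivity of $X$ and to Theorem~\ref{k3eff}. The point requiring genuine care is the rank two analysis, and within it the two structural facts used above: that an extremal boundary class is represented by a \emph{single} irreducible curve, and that a big effective class necessarily lies in the \emph{interior} of the effective cone. Once these are secured, the value of $C^2$ is determined purely by adjunction together with the parity of the intersection form, and the description of the generators follows.
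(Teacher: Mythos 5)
Your conclusions are correct, but note first that the paper does not actually prove Proposition~\ref{ko}: it states it as "a consequence of the results mentioned above or of~\cite[Theorem~2]{k}", i.e.\ of Theorem~\ref{k3eff} and Kov\'acs' theorem. So your proposal, which argues ranks one and two directly and only cites Theorem~\ref{k3eff} for $\varrho(X)\ge 3$, is a genuinely different (more self-contained) route. Your rank-one case and your higher-rank case are sound --- the latter is exactly the citation the paper itself relies on, combined with the correct observation that the extremal rays of a cone generated by finitely many $(-2)$-classes are spanned by such classes, and that a square-zero class cannot be proportional to a $(-2)$-class.

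The rank-two analysis, however, contains one genuine gap: the step ``Since $\Eff(X)$ is closed, $w$ is effective.'' Closedness (equivalently, polyhedrality) of $\Eff(X)$ does not by itself imply that the \emph{primitive} lattice generator of an extremal ray is an effective class. What polyhedrality gives you, via your own decomposition argument applied to any effective class written as a positive combination of points of the cone, is only that each extremal ray \emph{contains} some effective class, which a priori may be a proper multiple $mw$, $m\ge 2$, of the primitive generator $w$; the inference from ``the ray meets the set of effective classes'' to ``$w$ itself is effective'' is exactly what needs proof, and it is here that the K3 hypothesis (via Riemann--Roch, which you never invoke) enters. The repair is short: apply your decomposition to the effective class $mw$ on the ray to obtain an irreducible curve $C$ with $[C]=m'w$, $m'\ge 1$; adjunction and evenness of the lattice give $C^2\ge -2$ and even, while your bigness-versus-boundary argument gives $C^2\le 0$, so $C^2\in\{0,-2\}$. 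If $C^2=-2$, then $-2=(m')^2w^2$ forces $m'=1$, hence $w=[C]$ is a $(-2)$-curve. If $C^2=0$, then $w^2=0$, and Riemann--Roch on the K3-surface ($h^0(w)+h^0(-w)\ge 2+w^2/2=2$, where $-w$ is not effective because $w\cdot H>0$ for an ample $H$) shows that $w$ is effective; running your decomposition argument on $w$ itself then represents it by an irreducible curve of square zero. With this insertion your proof is complete.
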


Note that for $\varrho(X)=1$, the Cox ring 
of $X$ coincides with its usual 
homogeneous coordinate ring, 
whose generators have been studied 
in~\cite{SD}.

\section{K3-surfaces of Picard number two}
\label{sec:rho2}

We consider (complex algebraic) 
K3-surfaces $X$ with divisor class 
group $\Cl(X) = \ZZ w_1 \oplus \ZZ w_2$,
where $w_i^2 \in \{0,-2\}$ and, as we may 
assume then, $w_1 \mal w_2 \ge 1$ hold;
recall from~\cite[Cor.~2.9~(i)]{Mo} that 
any even lattice of rank two with 
signature~$(1,1)$ is the Picard lattice 
of an algebraic K3-surface.
According to Theorem~\ref{k3fingen} and the 
characterization of $\Eff(X)$ being
polyhedral provided in Theorem~\ref{ne}, 
such surfaces $X$ have a finitely generated 
Cox ring $\mathcal{R}(X)$.
We investigate the possible degrees of 
generators and relations for $\mathcal{R}(X)$.
An explicit computation of $\mathcal{R}(X)$
for the cases $w_1 \mal w_2=1,2$ is 
given in Section~\ref{sec-double}.
A first observation concerns the effective 
cone.

\begin{proposition}
\label{genpic2geneff}
Let $X$ be a K3-surface with 
$\Cl(X)= \ZZ w_1 \oplus \ZZ w_2$,
where $w_1, w_2$ are effective
such that $w_i^2 \in \{0,-2\}$ 
and $w_1 \mal w_2 \geq 2$ hold.
Then $w_1$ and $w_2$ generate
$\Eff(X)$ as a cone.
\end{proposition}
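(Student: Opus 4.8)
The plan is to combine the structural description of the effective cone of a K3-surface that is already available with the numerical constraints coming from the given intersection form. First I would record that $\Eff(X)$ is polyhedral: since $w_1^2 \in \{0,-2\}$, the lattice $\Cl(X)$ contains a class of self-intersection $0$ or $-2$, so Theorem~\ref{ne}(i) applies. As $w_1$ and $w_2$ are effective, the inclusion $\QQ_{\ge 0} w_1 + \QQ_{\ge 0} w_2 \subseteq \Eff(X)$ is clear, so the whole content lies in the reverse inclusion. For this I would invoke Proposition~\ref{ko}, which for Picard number two gives $\Eff(X) = \QQ_{\ge 0}[e_1] + \QQ_{\ge 0}[e_2]$, where $e_1, e_2$ are classes of $(-2)$- or $(0)$-curves; in particular each $e_j \in \Cl(X)$ is effective and nonzero with $e_j^2 \in \{0,-2\}$. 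It then suffices to show that each $e_j$ lies in $\QQ_{\ge 0} w_1 + \QQ_{\ge 0} w_2$.

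Writing $e_j = \alpha_j w_1 + \beta_j w_2$ with $\alpha_j, \beta_j \in \ZZ$, the key step is to prove $\alpha_j \ge 0$ and $\beta_j \ge 0$. I would rule out $\alpha_j < 0$ by a short case distinction. If in addition $\beta_j \le 0$, then $-e_j = (-\alpha_j) w_1 + (-\beta_j) w_2$ is a nonnegative integral combination of the effective classes $w_1, w_2$ with $-\alpha_j \ge 1$, hence effective; since $e_j$ itself is effective and nonzero this is impossible, because a nonzero class and its negative cannot both be effective. If instead $\beta_j > 0$, so that $\alpha_j$ and $\beta_j$ have opposite signs, then $\alpha_j\beta_j \le -1$, and using $w_1^2, w_2^2 \le 0$ together with $w_1 \mal w_2 = k \ge 2$ one obtains $e_j^2 = \alpha_j^2 w_1^2 + 2\alpha_j\beta_j k + \beta_j^2 w_2^2 \le 2\alpha_j\beta_j k \le -2k \le -4$, contradicting $e_j^2 \in \{0,-2\}$. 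Hence $\alpha_j \ge 0$, and the symmetric argument yields $\beta_j \ge 0$.

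Putting this together, $\Eff(X) = \QQ_{\ge 0}[e_1] + \QQ_{\ge 0}[e_2] \subseteq \QQ_{\ge 0} w_1 + \QQ_{\ge 0} w_2$, which with the trivial inclusion gives the claim. I expect the opposite-sign case to be the genuine point of the argument: it is exactly the scenario in which an effective $(-2)$-class protrudes beyond the cone spanned by $w_1$ and $w_2$, and it is precisely the hypothesis $k \ge 2$ (forcing $e_j^2 \le -4$, hence excluding $e_j^2 = -2$ for such a class) that rules it out. The remaining ingredients — polyhedrality from Theorem~\ref{ne} and the description of the generators from Proposition~\ref{ko} — are quoted directly, so that the entire proof reduces to this numerical dichotomy and, pleasantly, needs no separate treatment of the cases $(w_1^2,w_2^2) = (0,0), (0,-2), (-2,-2)$.
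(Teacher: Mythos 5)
Your proof is correct and takes essentially the same route as the paper's: both rest on Proposition~\ref{ko} (the generators of $\Eff(X)$ have self-intersection $0$ or $-2$) combined with the identical numerical computation that a mixed-sign class $a w_1 - b w_2$ with $a,b>0$ satisfies $w^2 \le -2ab\,w_1\mal w_2 \le -4$. The only difference is presentational: the paper argues by contradiction, assuming $w_1$ lies in the interior of $\Eff(X)$ so that some generator has the form $aw_1 - bw_2$ with $a>0$, whereas you verify directly that each generator of $\Eff(X)$ has nonnegative coordinates, disposing of the case of two nonpositive coordinates by the standard remark that a nonzero class and its negative cannot both be effective.
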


\begin{proof}
Suppose that 
$\cone(w_1,w_2) \subsetneq \Eff(X)$ 
holds.
Then we may assume that $w_1$ does not 
lie on the boundary of $\Eff(X)$.
Thus, one of the generators
of $\Eff(X)$ is of the form 
$w = aw_1 - bw_2$ for some $a,b \in \NN$,
where $a > 0$.
Proposition~\ref{ko} gives
$$ 
w^2
\ = \ 
a^2 w_1^2 
+ 
b^2 w_2^2
-
2  ab \, w_1 \mal w_2
\ \in \ 
\{0,-2\}.
$$
This can only be realized for $b = 0$,
because we assumed $w_i^2 \in \{0,-2\}$
and $w_1 \mal w_2 \ge 2$.
Thus, $w = aw_1$ holds and,
consequently, $w_1$ lies 
on the boundary of $\Eff(X)$;
a contradiction.
\end{proof}

Note that the assumption of $w_1$ and $w_2$ 
being effective in Proposition~\ref{genpic2geneff} 
can always be achieved: 
Riemann-Roch and $w_i^2 \in \{0,-2\}$ show that 
either $w_i$ or $-w_i$ is effective.

Our next result settles the case $w_i^2 = 0$
and $w_1 \mal w_2 \ge 3$.
In order to state it, we first have to
fix our usage.
Consider any finitely generated 
$\CC$-algebra $R$, graded by a lattice
$K$. 
We say that a system of homogeneous generators
$f_1,\dots,f_r$ of $R$ is {\em minimal\/}
if no $f_i$ can be expressed as a polynomial
in the remaining $f_j$.
Moreover, we say that $R$ 
{\em has a generator\/}
in degree $w \in K$ if any minimal system
of generators for $R$ contains a nontrivial
element of $R_w$.
Given a system $f_1,\dots,f_r$ 
of generators, we have the surjection
$$
\CC[T_1,\ldots,T_r] \ \to \ R,
\qquad
T_i \ \mapsto \ f_i.
$$ 
The {\em ideal of relations\/} 
determined by
$f_1,\dots,f_r$ is the kernel 
$I \subseteq \CC[T_1,\ldots,T_r]$
of this map, it is homogeneous w.r.t.~the 
$K$-grading of $\CC[T_1,\ldots,T_r]$
defined by $\deg(T_i) := \deg(f_i)$.
By a {\em minimal ideal\/} of relations, we 
mean the ideal of relations determined 
by a minimal system of generators.

\begin{theorem}
\label{gen-2}
Let $X$ be a K3-surface with  
$\Cl(X) \cong \ZZ w_1 \oplus \ZZ w_2$,
where $w_1$, $w_2$ are effective,
and intersection form given by
$w_1^2 = w_2^2 = 0$ and 
$w_1 \mal w_2 = k \ge 3$.
\begin{enumerate}
\item
The semiample cone of $X$ coincides with its 
effective cone.
\item
The Cox ring $\mathcal{R}(X)$ 
is generated in degrees $w_1$, $w_2$, 
$w_1 + w_2$, and one has
$$ 
\dim\left(\mathcal{R}(X)_{w_i}\right)
\ = \ 
2,
\qquad
\dim\left(\mathcal{R}(X)_{w_1+w_2}\right)
\ = \
k + 2.
$$
Moreover, any minimal system of generators 
of $\mathcal{R}(X)$ has $k+2$ members.
\item
For $k = 3$, the 
Cox ring $\mathcal{R}(X)$ is of the form 
$\CC[T_1, \ldots, T_5]/ \bangle{f}$ and the
degrees of the generators and the relation 
are given by 
$$ 
\deg(T_1) = \deg(T_2) = w_1,
\qquad
\deg(T_4) = \deg(T_5) = w_2,
$$ 
$$
\deg(T_3) = w_1+w_2,
\qquad
\deg(f) = 3w_1+3w_2.
$$
\item
For $k \ge 4$, 
any minimal ideal $\mathcal{I}(X)$ of relations of 
$\mathcal{R}(X)$ 
is generated in degree $2w_1+2w_2$, and we have 
\begin{eqnarray*}
\dim\left(\mathcal{I}(X)_{2w_1+2w_2}\right)
& = & 
\frac{k(k-3)}{2}.
\end{eqnarray*}
\end{enumerate}
\end{theorem}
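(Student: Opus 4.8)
The plan is to extract everything from the two elliptic pencils $|w_1|$, $|w_2|$ together with a single surjectivity-of-multiplication input, and to treat the four parts in order.

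\textit{Part (i) and the dimensions.} Since the intersection form has signature $(1,1)$ with $w_i^2=0$ and $w_1\mal w_2=k>0$, a direct dual-cone computation gives $\Nef(X)=\cone(w_1,w_2)$, which by Proposition~\ref{genpic2geneff} equals $\Eff(X)$. The proof of Theorem~\ref{k3fingen} shows that every nef class on such a K3 is semiample, so $\SAmple(X)=\Nef(X)$, proving~(i). For the dimensions, Riemann--Roch yields $\chi(aw_1+bw_2)=2+abk$; each $w_i$ is a primitive base-point-free class with $w_i^2=0$, hence an elliptic pencil with $h^0(w_i)=2$, while $aw_1+bw_2$ is nef and big for $a,b\ge 1$ with $(aw_1+bw_2)^2=2abk>0$, so $h^1=h^2=0$ by Kodaira/Kawamata--Viehweg vanishing ($K_X=0$) and $h^0(aw_1+bw_2)=abk+2$; on the boundary $h^0(aw_1)=a+1$ and $h^0(bw_2)=b+1$. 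In particular $\dim\mathcal{R}(X)_{w_i}=2$ and $\dim\mathcal{R}(X)_{w_1+w_2}=k+2$.

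\textit{Part (ii), generation.} Write $R_{a,b}:=\mathcal{R}(X)_{aw_1+bw_2}$. Along the first ray $\bigoplus_a R_{a,0}=\CC[x_0,x_1]$ is a polynomial ring (the pullback $\operatorname{Sym}^a H^0(w_1)\to H^0(aw_1)$ is an isomorphism), and likewise along the second ray, so off the interior no generators beyond $w_1,w_2$ are needed. The product map $R_{1,0}\otimes R_{0,1}\to R_{1,1}$ is injective, since the four products are the pullbacks of the sections of $\mathcal{O}(1,1)$ under the dominant (indeed finite, of degree $k$) morphism $(\pi_1,\pi_2)\colon X\to\PP^1\times\PP^1$; its image is therefore $4$-dimensional and exactly $k-2=\dim R_{1,1}-4$ new generators are forced in degree $w_1+w_2$. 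This yields the minimal count $2+2+(k-2)=k+2$, provided no generators occur in higher interior degrees. The key step is to prove surjectivity of
\[
R_{1,0}\otimes R_{a-1,b}\ \oplus\ R_{0,1}\otimes R_{a,b-1}\ \oplus\ R_{1,1}\otimes R_{a-1,b-1}\ \lra\ R_{a,b}
\]
for all interior $(a,b)$ with $a+b\ge 3$. I would obtain this from a surjectivity theorem for multiplication maps on K3 surfaces, equivalently from Castelnuovo--Mumford regularity with respect to the base-point-free class $w_1$ applied to the sheaves $\pi_{1*}\mathcal{O}_X(bw_2)$ on $\PP^1$. \emph{This is the main obstacle and the only nonformal input for~(ii).}

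\textit{Part (iii).} Since $\Cl(X)$ is free, $\mathcal{R}(X)$ is a UFD, so $\b{X}=\Spec\,\mathcal{R}(X)$ is integral of dimension $\dim X+\rk\Cl(X)=4$; for $k=3$ it is generated by $5$ elements, hence embeds as a hypersurface in $\CC^5$ and its ideal of relations is principal, generated by one irreducible $f$. The degree $\deg f=3w_1+3w_2$ is pinned down either by the triviality of the canonical class of $X$ (forcing $\deg f=\sum_i\deg(T_i)=3w_1+3w_2$), or self-containedly by comparing, in each bidegree $(a,b)$, the number of monomials in the five generators with $\dim R_{a,b}$: these coincide for all bidegrees of total degree below $6$ and first differ, by exactly $1$, in bidegree $(3,3)$, which therefore locates the unique relation.

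\textit{Part (iv).} For $k\ge 4$ the same monomial count shows that no relations occur in the interior bidegrees $(1,1),(2,1),(1,2)$ (nor on the boundary, where the slices are polynomial rings), while in bidegree $(2,2)$ the number of monomials exceeds $\dim R_{2,2}=4k+2$ by exactly $\tfrac12(k-1)(k-2)-1=\tfrac{k(k-3)}2$; granting the multiplication surjectivity of Part~(ii), this difference equals $\dim\mathcal{I}(X)_{2w_1+2w_2}$. It then remains to show that the minimal ideal of relations is generated in this single degree, i.e.\ that there are no essential higher syzygies. This last point I would establish with the techniques of~\cite{lv}, computing the relevant Koszul/syzygy cohomology degree by degree. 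The higher-syzygy vanishing, together with the multiplication surjectivity, is the principal difficulty of the theorem.
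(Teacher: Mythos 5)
Your overall scaffolding---the cone computations in (i), the Riemann--Roch dimension counts, the UFD/principality argument and canonical-class degree computation in (iii), and the monomial count giving $\tfrac{k(k-3)}{2}$ in (iv)---is correct and parallels the paper. But the two steps you explicitly defer, namely the surjectivity of the multiplication maps in (ii) and the confinement of all essential relations to degree $2(w_1+w_2)$ in (iv), are not routine inputs that can be quoted from a general ``surjectivity theorem for K3 surfaces'': they are the actual mathematical content of the theorem, and the formal machinery you propose cannot deliver them. The Koszul-type argument (the paper's Lemma~\ref{gens}, or equivalently regularity along the elliptic fibration $\pi_1$) needs $h^1(w-2w_1)=0$, and this vanishing fails exactly in the diagonal degrees: $h^1(2w_2)=1$, since $2w_2$ is twice an elliptic pencil class, so $h^0(2w_2)=3$ while $\chi(2w_2)=2$. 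In fact, since $R_{1,2}=R_{0,1}\mal R_{1,1}$ and $R_{2,1}=R_{1,0}\mal R_{1,1}$ (these do follow from the Koszul argument, applied in the good direction), generation of $R_{2,2}$ by lower degrees is \emph{equivalent} to surjectivity of $\Sym^2 R_{1,1}\to R_{2,2}$, i.e.\ to projective normality of the model defined by $\vert w_1+w_2\vert$---and this is genuinely false for $k=2$, where the Cox ring needs an extra generator in degree $2(w_1+w_2)$, cf.\ Proposition~\ref{rhoX2}(i). So any correct argument must use $k\ge 3$ in an essential way, and your sketch never does; Castelnuovo--Mumford regularity of $\pi_{1*}\mathcal{O}_X(2w_2)$ only reduces the degrees $(a,2)$ with $a\ge 3$ to $(2,2)$ and says nothing about $(2,2)$ itself.

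The paper supplies the missing input from curve theory: since $\Cl(X)$ contains no classes of square $-2$, linear systems on $X$ have no fixed components, the class $u=w_1+w_2$ is represented by a smooth irreducible curve of genus $k+1\ge 4$, and this curve is non-hyperelliptic by~\cite[Thm.~5.2]{SD} because $u$ is primitive, $u^2\ge 6$ and $u\mal w_i=k\ge 3$; Proposition~\ref{gen1} then shows that $\bigoplus_n R_{nu}$ is generated in degree one, which is precisely the diagonal surjectivity you are missing. Likewise, for (iv) the paper does not merely ``apply the techniques of~\cite{lv}'': it uses Theorem~\ref{rels} together with the specific $h^1$-vanishings (with the cases $b=1$ and $b=0$ treated separately) to confine essential relations to degrees $2u$ and $3u$, and then invokes~\cite[Thm.~7.2]{SD}---applicable because $u\mal w_i=k>3$ when $k\ge 4$---to exclude degree $3u$; note this is sharp, since for $k=3$ (where $u\mal w_i=3$) the unique relation does sit in degree $3u$. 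Until you prove the diagonal surjectivity and the exclusion of degree-$3u$ relations (or substitutes that visibly use $k\ge 3$, resp.\ $k\ge 4$), parts (ii) and (iv) are unproven, and (iii) inherits the gap since it relies on the generation statement of (ii).
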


Note that for $k=3,4$ the Cox ring 
of $X$ is a complete intersection,
while for $k \ge 5$ this no longer 
holds.
Before giving the proof of the above theorem,
we briefly provide the necessary ingredients.

\begin{lemma}
\label{gens}
Let $X$ be a smooth surface,
assume that $D, D_1, D_2 \in \WDiv(X)$ 
satisfy  $D_1 \cdot D_2 = 0$ and $h^1(D-D_1-D_2) = 0$,
and let $0 \ne f_i \in H^0(D_i)$
such that $\div(f_1)+D_1$ and $\div(f_2)+D_2$
have no common components.
Then one has a surjection
\begin{eqnarray*}
H^0(D-D_1)
\ \oplus \ 
H^0(D-D_2)
&  \to &
H^0(D),
\\
(g_1,g_2) 
& \mapsto & 
g_1f_1 + g_2 f_2.
\end{eqnarray*}
\end{lemma}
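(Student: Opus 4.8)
The plan is to prove the claimed surjection of Lemma~\ref{gens} by exhibiting a short exact sequence of sheaves whose associated long exact cohomology sequence identifies the cokernel of the stated map with a piece of $H^1(D-D_1-D_2)$, which vanishes by hypothesis. First I would use the two sections $f_1 \in H^0(D_1)$ and $f_2 \in H^0(D_2)$ to build a Koszul-type complex. The key point is that $f_1$ and $f_2$ should define a section of $\mathcal{O}_X(D_1) \oplus \mathcal{O}_X(D_2)$ whose zero locus is controlled: the hypothesis that $\div(f_1)+D_1$ and $\div(f_2)+D_2$ have no common components guarantees that the common vanishing is in codimension two, while the numerical condition $D_1 \cdot D_2 = 0$ is exactly what makes this intersection empty (or at least not divisorial) so that the Koszul complex is exact. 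Concretely I would consider the sheaf map
\begin{eqnarray*}
\mathcal{O}_X(D-D_1) \oplus \mathcal{O}_X(D-D_2)
& \stackrel{(f_1,f_2)}{\lra} &
\mathcal{O}_X(D),
\end{eqnarray*}
given by $(g_1,g_2) \mapsto g_1 f_1 + g_2 f_2$, and analyze its kernel and cokernel as sheaves.

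The key steps, in order, are as follows. First I would argue that the above sheaf map is \emph{surjective} as a map of sheaves: at a point $x$ where not both $f_1,f_2$ vanish this is clear, and the no-common-component hypothesis together with $D_1 \cdot D_2 = 0$ forces the locus where both sections vanish to be zero-dimensional, hence of codimension two, so that surjectivity can fail only on a codimension-two set; since the cokernel would otherwise be supported on a divisor, one checks it is in fact zero. Second, I would identify the kernel sheaf $\mathcal{K}$ of this map. The natural Koszul syzygy $(g_1,g_2) = h\cdot(-f_2,f_1)$ suggests that $\mathcal{K} \cong \mathcal{O}_X(D-D_1-D_2)$, the isomorphism being $h \mapsto (-hf_2, hf_1)$; verifying this is a genuine isomorphism of sheaves again uses that the vanishing locus of $(f_1,f_2)$ is codimension two, so the Koszul complex
\begin{eqnarray*}
0 \lra \mathcal{O}_X(D-D_1-D_2) \lra \mathcal{O}_X(D-D_1) \oplus \mathcal{O}_X(D-D_2) \lra \mathcal{O}_X(D) \lra 0
\end{eqnarray*}
is exact. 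Third, I would take the long exact sequence in cohomology. The relevant tail reads
\begin{eqnarray*}
H^0(D-D_1) \oplus H^0(D-D_2) \lra H^0(D) \lra H^1(D-D_1-D_2),
\end{eqnarray*}
and the hypothesis $h^1(D-D_1-D_2)=0$ makes the last term vanish, giving surjectivity of the map on global sections, which is exactly the assertion.

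The main obstacle I expect is the careful justification that the Koszul complex of sheaves is genuinely exact, i.e., that the two hypotheses ($D_1 \cdot D_2 = 0$ together with the no-common-component condition on $\div(f_1)+D_1$ and $\div(f_2)+D_2$) really do force the common vanishing locus of $f_1$ and $f_2$ to have codimension two. The subtlety is that $f_i$ is a section of $\mathcal{O}_X(D_i)$, so its scheme of zeros is $\div(f_i)+D_i$ rather than just $\div(f_i)$; the no-common-component hypothesis is phrased precisely in terms of these effective divisors, and I would need to combine it with the intersection-number condition to rule out any one-dimensional common component. Once the codimension-two statement is in hand, the exactness of the Koszul complex on a smooth surface is standard (the two sections form a regular sequence locally), and the cohomology computation is immediate. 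I would therefore concentrate the written proof on the geometric verification that $f_1$ and $f_2$ have no common divisorial zeros, treating the homological bookkeeping as routine.
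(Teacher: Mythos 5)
Your overall strategy coincides with the paper's: the paper writes down the Koszul sequence $0 \to \mathcal{O}_X(-D_1-D_2) \to \mathcal{O}_X(-D_1)\oplus\mathcal{O}_X(-D_2) \to \mathcal{O}_X \to 0$ determined by $f_1,f_2$, tensors with $\mathcal{O}_X(D)$ and takes cohomology, using $h^1(D-D_1-D_2)=0$ exactly as you do. However, your justification of right-exactness of this sequence contains a genuine gap. You argue that the common vanishing locus of the two sections is zero-dimensional, hence that surjectivity "can fail only on a codimension-two set," and that a nonzero cokernel "would be supported on a divisor." That last claim is false: the cokernel of a map of locally free sheaves can perfectly well be a skyscraper supported at points --- for instance $(g_1,g_2)\mapsto g_1x+g_2y$ on $\mathbb{A}^2$ has cokernel $\mathcal{O}/(x,y)$, supported at the origin. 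Indeed, if the common zero scheme $Z$ of $f_1,f_2$ were a nonempty finite set, then the image of your sheaf map would be $\mathcal{I}_Z(D)$ rather than $\mathcal{O}_X(D)$: this is exactly what the local regular-sequence property gives, namely a Koszul resolution of $\mathcal{O}_Z$, not the short exact sequence you wrote. In that case the map on global sections lands inside the sections vanishing on $Z$ and the lemma's conclusion would in general fail. So "codimension two" is not enough; you need $Z=\emptyset$, and your closing paragraph, which declares the homological part routine once codimension two is established, rests on this incorrect reduction.

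The fix is short and is precisely where the hypothesis $D_1 \cdot D_2 = 0$ must be used (you do mention "empty" parenthetically, but your key steps then settle for the insufficient codimension-two statement). The zero schemes of $f_1$ and $f_2$, viewed as sections of $\mathcal{O}_X(D_i)$, are the effective divisors $Z_i = \div(f_i)+D_i$, which are linearly equivalent to $D_i$. Since intersection numbers depend only on the classes, $Z_1 \cdot Z_2 = D_1 \cdot D_2 = 0$. By hypothesis $Z_1$ and $Z_2$ have no common component, so $Z_1 \cdot Z_2$ is a sum of strictly positive local intersection multiplicities over the points of $Z_1 \cap Z_2$; as this sum is zero, $Z_1 \cap Z_2 = \emptyset$. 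With disjointness in hand, at every point at least one $f_i$ is a local generator of $\mathcal{O}_X(D_i)$, so the sheaf map is surjective, while the no-common-component condition alone already gives exactness in the middle and injectivity on the left. The remainder of your argument (long exact sequence plus $h^1(D-D_1-D_2)=0$) then goes through and agrees with the paper's proof.
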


\begin{proof}
First note that due to the assumptions, 
the assignments 
$\imath \colon h \mapsto (hf_2,-hf_1)$
and $\varphi \colon (h_1,h_2) \mapsto h_1f_1+h_2f_2$
give rise to an exact sequence of sheaves
$$
\xymatrix{
0\ar[r] 
& 
{\mathcal{O}_X(-D_1-D_2)}
\ar[r]^{\imath \qquad}
& 
{\mathcal{O}_X(-D_1)}
\oplus
{\mathcal{O}_X(-D_2)}
\ar[r]^{\qquad \qquad \varphi}
& 
{\mathcal{O}_X}
\ar[r] 
& 0.
}
$$
Tensoring this sequence with $\mathcal{O}_X(D)$ 
and looking at the associated cohomology sequence,
we obtain the assertion.
\end{proof}

\begin{proposition}
\label{gen1}
Let $X$ be a K3-surface, 
$w \in \Cl(X)$ be the class 
of a smooth irreducible 
curve $D \subseteq X$ of genus $g$
and consider the 
Veronese algebra
\begin{eqnarray*}
\mathcal{R}(X,w)
& := & 
\bigoplus_{n \in \NN} \mathcal{R}(X)_{nw}.
\end{eqnarray*}
Then the algebra $\mathcal{R}(X,w)$ 
is generated
in degree one if $D$ is not hyperelliptic 
or $g \le 1$,
in degrees one and three
if $g = 2$
and in degrees one and two
if $D$ is hyperelliptic of genus $g \ge 3$.
\end{proposition}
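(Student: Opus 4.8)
The plan is to transfer the question from the surface $X$ to the curve $D$ and then appeal to the classical theory of canonical rings of curves. First I would dispose of the low genus cases directly. If $g=0$, then $D$ is a $(-2)$-curve, each $\vert nD\vert$ consists of the single divisor $nD$, so $\mathcal{R}(X,w) = \CC[f]$ with $f$ the canonical section. If $g=1$, then $D^2=0$ and $\vert D\vert$ defines an elliptic fibration $\pi \colon X \to \PP^1$ with $\mathcal{O}_X(D) = \pi^*\mathcal{O}_{\PP^1}(1)$ and $\pi_*\mathcal{O}_X = \mathcal{O}_{\PP^1}$; hence $\mathcal{R}(X,w) \cong \bigoplus_n H^0(\PP^1,\mathcal{O}_{\PP^1}(n))$ is a polynomial ring, generated in degree one. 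In both cases the assertion holds, consistent with the ``$g \le 1$'' clause.

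For $g \ge 2$ we have $D^2 = 2g-2 > 0$, so $D$ is nef and big and, by adjunction, $\mathcal{O}_D(D) = K_D$. The key reduction comes from the restriction sequence
$$
0 \to \mathcal{O}_X((n-1)D) \to \mathcal{O}_X(nD) \to \mathcal{O}_D(nK_D) \to 0,
$$
together with the vanishing $H^1(X,(n-1)D) = 0$ (Kawamata--Viehweg for $n \ge 2$, and $h^1(\mathcal{O}_X)=0$ for $n=1$), which yields surjections $H^0(X,nD) \to H^0(D,nK_D)$ for all $n$. Writing $s_D \in \mathcal{R}(X)_w = H^0(X,D)$ for the section cutting out $D$, these assemble into a surjective graded ring homomorphism whose kernel in degree $n$ is exactly $s_D \cdot \mathcal{R}(X)_{(n-1)w}$. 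Thus $\mathcal{R}(X,w)/(s_D)$ is isomorphic to the canonical ring $\bigoplus_n H^0(D,nK_D)$ of $D$. Since $s_D$ is a nonzerodivisor of degree one, a graded Nakayama argument shows that the minimal generators of $\mathcal{R}(X,w)$ in degrees $\ge 2$ correspond precisely to those of the canonical ring of $D$, while in degree one one simply adjoins $s_D$. Hence the generating degrees of $\mathcal{R}(X,w)$ are degree one together with the higher generating degrees of the canonical ring of $D$.

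It then remains to recall the generation of canonical rings. If $D$ is non-hyperelliptic of genus $g \ge 3$, then by Max Noether's theorem the canonical ring is generated in degree one, and so is $\mathcal{R}(X,w)$. For the hyperelliptic cases I would use the degree-two cover $\pi \colon D \to \PP^1$, under which $K_D = \pi^*\mathcal{O}_{\PP^1}(g-1)$ and $\pi_*\mathcal{O}_D = \mathcal{O}_{\PP^1} \oplus \mathcal{O}_{\PP^1}(-(g+1))$. Decomposing $H^0(D,nK_D)$ into invariant and anti-invariant parts, the invariant part is the coordinate ring of the rational normal curve of degree $g-1$, generated in degree one, while the anti-invariant part is a module over it generated by the section $y$ with $y^2$ the branch polynomial. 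The lowest degree in which this module is nonzero is $\lceil (g+1)/(g-1)\rceil$, equal to $3$ for $g=2$ and to $2$ for $g \ge 3$; this accounts for the extra generator and yields the claimed degrees one and three for $g=2$, one and two for hyperelliptic $g\ge 3$.

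The main obstacle is the hyperelliptic case: one must pin down the exact degree of the extra generator $y$ and verify that degree-one elements suffice to propagate it through all higher graded pieces, i.e.\ that the anti-invariant module is generated in the single degree $\lceil (g+1)/(g-1)\rceil$. This rests on the surjectivity of the multiplication maps $H^0(\PP^1,\mathcal{O}_{\PP^1}(g-1)) \otimes H^0(\PP^1,\mathcal{O}_{\PP^1}(m)) \to H^0(\PP^1,\mathcal{O}_{\PP^1}(g-1+m))$ for $m \ge 0$, which is elementary but is exactly the point at which the split between $g=2$ and $g \ge 3$ arises.
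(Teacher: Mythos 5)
Your argument is correct, and its core is the same as the paper's: both reduce to the canonical ring of $D$ via the restriction sequence $0 \to \mathcal{O}_X((n-1)D) \to \mathcal{O}_X(nD) \to \mathcal{O}_D(nK_D) \to 0$, the adjunction isomorphism $\mathcal{O}_X(D)\vert_D \cong K_D$, and vanishing of $H^1(X,(n-1)D)$, and then quote generation facts for canonical rings of curves. The differences are worth recording. Where the paper simply cites~\cite[page~117]{ACGH} for those facts, you reprove the hyperelliptic cases from scratch by decomposing under the double cover $D \to \PP_1$, using $\pi_*\mathcal{O}_D = \mathcal{O}_{\PP_1}\oplus\mathcal{O}_{\PP_1}(-(g+1))$ and surjectivity of multiplication maps on $\PP_1$; this is a correct, self-contained substitute for the citation, and your degree $\lceil (g+1)/(g-1)\rceil$ for the extra generator reproduces the split between $g=2$ and $g\ge 3$. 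You also spell out the graded Nakayama lifting step (using that the canonical section $s_D$ is a degree-one nonzerodivisor with $\mathcal{R}(X,w)/(s_D)$ the canonical ring), which the paper compresses into ``this gives the assertion''. Most notably, you treat $g=1$ separately via the elliptic fibration, whereas the paper runs $g=1$ through the same exact sequence and attributes the surjectivity to Kawamata--Viehweg; that theorem does not literally apply there, since $D^2=0$ makes $(n-1)D$ nef but not big (indeed $h^1(X,(n-1)D)=n-2$ for $n\ge 2$). Your fibration argument (or, alternatively, a dimension count showing the image in $H^0(D,\mathcal{O}_D)\cong\CC$ is nonzero) closes that small gap, so your route is slightly more careful at exactly this point.
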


\begin{proof}
If $g=0$ holds, 
then $\mathcal{R}(X,w)=\CC[s]$ with $s\in H^0(D)$,
since $D$ is irreducible with negative self-intersection.
Thus  $\mathcal{R}(X,w)$ is generated in degree one.

For non-rational $D$, the canonical algebra 
$\oplus H^0(D, nK_D)$ is generated 
in degree one if $D$ is not hyperelliptic 
or $g = 1$,
in degrees one and three
if $g = 2$
and in degrees one and two
if $D$ is hyperelliptic and $g \ge 3$,
see~\cite[page~117]{ACGH}.
By the adjunction formula we have 
$\mathcal{O}_X(D)_{\vert D} \cong K_D$.
Thus, we obtain the exact sequence
$$
\xymatrix{
0
\ar[r]
&
H^0(X,(n-1)D)
\ar[r]
&
H^0(X,nD)
\ar[r]
&
H^0(D,nK_D)
\ar[r]
&
0,
}
$$
where the last zero is due to Kawamata-Viehweg 
vanishing theorem.
This gives the the assertion.
\end{proof}

In order to prove Theorem~\ref{gen-2}~(iii), 
we use the techniques introduced in~\cite{lv}.
We say that a degree $w \in R$ is 
{\em not essential\/}
for a minimal ideal $I$ of relations
of a $K$-graded algebra $R$ if no
minimal system of homogeneous generators 
of $I$ has members of degree $w$.

\begin{theorem}
\label{rels}
See~\cite{lv}.
Let $f_1, \ldots, f_r \in \mathcal{R}(X)$ 
be a minimal system of generators 
for the Cox ring of a surface $X$ 
and set $w_i := \deg(f_i) \in \Cl(X)$.
Consider the maps
\begin{eqnarray*}
\label{two}
\varphi_{w,i} \colon
\mathcal{R}(X)_{w-w_1-w_i}
\ \oplus \ 
\mathcal{R}(X)_{w-w_2-w_i}
&  \to &
\mathcal{R}(X)_{w-w_i},
\\
(g_1,g_2) 
& \mapsto & 
g_1f_1 + g_2 f_2,
\end{eqnarray*}
where $w \in \Cl(X)$ and $i=3, \ldots, r$.
If $w_1 \mal w_2 = 0$ holds and 
$\varphi_{w,i}$ is surjective for $i = 3, \ldots, r$,
then $w$ is not essential for 
the ideal of relations arising 
from~$f_1, \ldots, f_r$.
\end{theorem}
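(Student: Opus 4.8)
The plan is to reinterpret ``$w$ is not essential'' as the vanishing of a Koszul homology group in degree $w$, and then to exhibit every cycle in that degree as a Koszul boundary, using the maps $\varphi_{w,i}$ to reduce to the two distinguished generators $f_1,f_2$ and the hypothesis $w_1\mal w_2=0$ to treat the latter. Write $R:=\mathcal{R}(X)$ and fix the presentation $S:=\CC[T_1,\ldots,T_r]\to R$, $T_i\mapsto f_i$, with kernel $I$ and grading $\deg(T_i):=w_i$. First I would recall the standard fact that the minimal generators of $I$ in degree $w$ are counted by $(I/\mathfrak{m}I)_w=\operatorname{Tor}_1^S(R,\CC)_w$, where $\mathfrak{m}\subseteq S$ is the irrelevant ideal. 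Resolving $\CC=S/\mathfrak{m}$ by the Koszul complex on $T_1,\ldots,T_r$ and tensoring with $R$ identifies this $\operatorname{Tor}$ with the first homology of the Koszul complex $K_\bullet(f_1,\ldots,f_r;R)$; thus $w$ is not essential exactly when $H_1\bigl(K_\bullet(f;R)\bigr)_w=0$, and this is what I would establish.

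Concretely, writing $e_i$ for the standard basis and $e_{ij}:=f_je_i-f_ie_j$ for the Koszul syzygies, a cycle in degree $w$ is an element $z=\sum_{k=1}^r g_ke_k$ with $g_k\in R_{w-w_k}$ and $\sum_k g_kf_k=0$, and the boundaries are the $R$-linear combinations of the $e_{ij}$. For each $i=3,\ldots,r$ the surjectivity of $\varphi_{w,i}$ lets me write $g_i=a_if_1+b_if_2$ with $a_i\in R_{w-w_1-w_i}$ and $b_i\in R_{w-w_2-w_i}$. Adding the boundary $\sum_{i\ge3}(a_ie_{1i}+b_ie_{2i})$ to $z$ then annihilates every component $g_ie_i$ with $i\ge3$ and leaves a cycle $z'=g_1'e_1+g_2'e_2$; since $z'$ is still a cycle, its coordinates satisfy $g_1'f_1+g_2'f_2=0$.

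The decisive step is to show that this surviving relation between $f_1$ and $f_2$ is itself Koszul, i.e.\ $z'\in R\mal e_{12}$, and here $w_1\mal w_2=0$ enters. I would choose divisors $D_1,D_2$ of classes $w_1,w_2$ with $f_i\in H^0(D_i)$ and no common component; then $D_1\mal D_2=0$ forces $\operatorname{div}(f_1)$ and $\operatorname{div}(f_2)$ to be disjoint, so that $f_1,f_2$ is a regular sequence and the Koszul sequence of sheaves
\[
0\to\mathcal{O}_X(-D_1-D_2)\xrightarrow{\ \imath\ }\mathcal{O}_X(-D_1)\oplus\mathcal{O}_X(-D_2)\xrightarrow{\ \varphi\ }\mathcal{O}_X\to 0,
\]
with $\imath(h)=(hf_2,-hf_1)$ and $\varphi(h_1,h_2)=h_1f_1+h_2f_2$ as in Lemma~\ref{gens}, is exact. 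Twisting by $\mathcal{O}_X(D)$ for a divisor $D$ of class $w$ and applying the left-exact functor $H^0(X,-)$ shows that the kernel of $(g_1,g_2)\mapsto g_1f_1+g_2f_2$ on $R_{w-w_1}\oplus R_{w-w_2}$ equals the image of $h\mapsto(hf_2,-hf_1)$. Hence $g_1'=hf_2$, $g_2'=-hf_1$ for some $h\in R_{w-w_1-w_2}$, so $z'=h\,e_{12}$ is a boundary, and therefore so is the original $z$. This yields $H_1\bigl(K_\bullet(f;R)\bigr)_w=0$, as wanted.

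I expect the main obstacle to lie in the justification of this last step: that $f_1,f_2$ may indeed be taken with no common divisorial component --- which I would deduce from the graded factoriality of the Cox ring, so that non-associated generators are coprime --- and that $D_1\mal D_2=0$ then really does make the sheaf-level Koszul complex exact in the middle, so that the merely left-exact functor $H^0$ already suffices to identify the syzygy module with the Koszul one. Note that the surjectivity of the global-section map governed by $h^1(D-D_1-D_2)$ in Lemma~\ref{gens} is not needed here, as that role is played by the hypothesis on the $\varphi_{w,i}$; the remaining points, chiefly the degree bookkeeping in the reduction step, are routine.
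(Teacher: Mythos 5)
The paper offers no proof of Theorem~\ref{rels} at all --- it is quoted from~\cite{lv} --- and the argument there is essentially the one you give: interpret ``$w$ is not essential'' as the vanishing of the Picard-graded Betti number $\dim_\CC\operatorname{Tor}_1^S(\mathcal{R}(X),\CC)_w=\dim_\CC H_1\bigl(K_\bullet(f_1,\ldots,f_r;\mathcal{R}(X))\bigr)_w$, use the surjectivity of the maps $\varphi_{w,i}$ to clear the components of a cycle along $e_3,\ldots,e_r$, and use the Koszul sheaf sequence of two disjoint curves to see that the surviving syzygy between $f_1$ and $f_2$ is a boundary. Your proof is correct, and the point you flag as the main obstacle closes exactly as you suggest: by minimality no $f_i$ can be a product of two homogeneous elements of nonzero degree (such an $f_i$ would be a polynomial in the remaining generators), so the graded factoriality of the Cox ring makes each $f_i$ a homogeneous prime; the zero divisors of the non-associated primes $f_1,f_2$ are therefore distinct prime divisors, and $w_1\mal w_2=0$ then forces them to be disjoint, which is precisely what is needed for the middle exactness of the twisted sheaf sequence and hence for $z'\in\mathcal{R}(X)\cdot e_{12}$.
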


\begin{proof}[Proof of Theorem~\ref{gen-2}]
Let $D_i \in \WDiv(X)$ represent 
$w_i \in \Cl(X)$.
Then $D_i^2=0$ implies that the complete 
linear system of $D_i$ defines a fibration,
which in turn gives $h^0(w_i) = 2$.
In particular, we have bases
$(f_{i1}, f_{i2})$ for $\mathcal{R}(X)_{w_i}$, 
where $i = 1,2$.
Moreover, applying Riemann-Roch
yields $h^1(w_i) = 0$.

By Proposition~\ref{genpic2geneff}, 
the classes $w_1$ and $w_2$ generate 
the effective cone.
Moreover, $h^0(w_i) = 2$ and $w_i^2 = 0$
show that $w_i$ is semiample, i.e.,
we have $\Eff(X) = \SAmple(X)$.
Consequently, any divisor class
$w = aw_1 + bw_2$ with $a,b >0$
is ample and the 
Kawamata-Viehweg vanishing theorem 
gives $h^1(w) = 0$.

We show now that $\mathcal{R}(X)$ 
is generated in 
degrees $w_1$,$w_2$ and $w_1+w_2$.
Consider a class $w = aw_1 + bw_2$.
If $a \ge 3$ and $b \ge 1$ or 
$(a,b) = (2,1)$ holds, 
then we have $h^1(w-2w_1) = 0$. 
Thus, Lemma~\ref{gens}
provides a surjective map
$$
\varphi \colon
\mathcal{R}(X)_{w-w_1}
\oplus 
\mathcal{R}(X)_{w-w_1}
\ \to \ 
\mathcal{R}(X)_{w},
\qquad
(g_1,g_2) 
\ \mapsto \ 
g_1f_{11} + g_2f_{12}.
$$ 
If $b=0$ holds, then the complete linear system 
of any representative of $w$ is composed with 
a pencil. This implies again surjectivity
of the above map $\varphi$.

Iterating this procedure, we see that for any 
$w = aw_1 + bw_2$ with $a \ge 3$ and $b \ge 1$ or 
$(a,b) = (2,1)$ or $b=0$, 
the elements of $\mathcal{R}(X)_{w}$ 
are polynomials in $f_{11}, f_{12}$ and 
the elements of $\mathcal{R}(X)_{u}$,
where 
$$
u \ = \ 2w_1+bw_2
\quad \text{if } b\geq 2,
\qquad\qquad 
u \ = \ w_1+bw_2
\quad \text{if } b=0,1.
$$
Interchanging the roles of $a$ and $b$, in this
reasoning, we finally see that any element of
$\mathcal{R}(X)_{w}$ is a polynomial in 
$f_{11}, f_{12}, f_{21}, f_{22}$ and 
elements of $\mathcal{R}(X)_{nu}$,
where $u := w_1+w_2$ and $n \le 2$.

Thus, we are left with describing the elements
of $\mathcal{R}(X)_{nu}$, where $u = w_1+w_2$.
Observe that no complete linear system 
on $X$ has fixed components, because, 
by Lemma~\ref{negative} and the adjunction 
formula, any such component would
be a $(-2)$-curve and, by our assumptions, 
there are no classes of self-intersection $-2$
in $\Cl(X)$.
Moreover, note that $w_1$ and $w_2$ are the only 
classes of elliptic curves in $\Cl(X)$.

It follows that $u=w_1+w_2$ is represented by
a smooth irreducible curve $D \subseteq X$ of 
genus $u^2/2 + 1 > 3$.
Since $u^2 \geq 6$ holds, $u$ is a primitive 
class in $\Cl(X)$ and we have $u \mal w_i \geq 3$, 
the curve $D$ is not hyperelliptic, 
see~\cite[Thm.~5.2]{SD}.
According to Proposition~\ref{gen1} 
the elements of $\mathcal{R}(X)_{nu}$
are polynomials in those of
$\mathcal{R}(X)_{u}$.

Thus, we obtained that $\mathcal{R}(X)$ is 
generated in the degrees $w_1$, $w_2$ 
and $u := w_1+w_2$. Moreover, the 
Riemann-Roch Theorem gives us  
$$ 
\dim(\mathcal{R}(X)_{w_i})
\ = \ 
2,
\qquad
\dim(\mathcal{R}(X)_{w_1+w_2})
\ = \
k + 2.
$$

We turn to the relations.
First note that any minimal 
system of generators must
comprise a basis $(f_{11},f_{12})$
of $\mathcal{R}(X)_{w_1}$ 
and a basis $(f_{21},f_{22})$
of $\mathcal{R}(X)_{w_2}$.
Now, consider any degree 
$w = aw_1+bw_2$.
If $a\geq 4$ and $b\geq 2$ or $(a,b)=(3,2)$ 
holds, then we have
$$
h^1(w - 2w_1 - w_2) 
\ = \
h^1(w - 3w_1 - w_2) 
\ = \ 
0.
$$
Thus, taking $f_1 = f_{11}$ 
and $f_2 = f_{12}$ in Theorem~\ref{rels} 
and using 
Lemma~\ref{gens}, we see that 
$w$ is not essential for any minimal ideal 
of relations of $\mathcal{R}(X)$. 
If $b=1$ holds, then 
$$
\mathcal{R}(X)_{(c-1)w_1} 
\oplus 
\mathcal{R}(X)_{(c-1)w_1}
\ \to \
\mathcal{R}(X)_{cw_1},
\qquad
(g_1,g_2)
\ \mapsto \
g_1 f_{11} + g_2 f_{12}
$$
is surjective for $c = a, \, a-1$, because 
$\mathcal{R}(X)$ is generated in degrees 
$w_1$, $w_2$ and $w_1+w_2$.
Thus, Theorem~\ref{rels} shows $w$ 
is not essential for $b=1$. 
Eventually, there are no
relations of degree $w$ for $b=0$.
In fact, then $\mathcal{R}(X)_w$  
is generated by $f_{11}$ and $f_{12}$
and hence any such relation defines a 
relation among $f_{11}$ and $f_{12}$,
which contradicts the fact that 
$f_{11},f_{12}$ define a surjection
$X \to \PP_1$.

Exchanging the roles of $w_1$ and $w_2$ 
in this consideration, we obtain that 
essential relations can only occur in
degrees $2u$ and $3u$, where $u = w_1+w_2$.

In the case $k=3$, the statements proven
so far give that any minimal system of 
generators has five members and their degrees 
are $w_1$, $w_1$, $w_2$, $w_2$ and 
$u$. Hence there must be exactly one 
relation in $\mathcal{R}(X)$.
The degree of this relation minus
the sum of the degrees of the generators 
gives the canonical class, see~\cite[Prop.~8.5]{BeHa2},
and hence vanishes.
Thus our relation must have degree~$3u$. 

Finally, let $k \ge 4$.
As observed before, $\mathcal{R}(X)_{nu}$
is generated by $\mathcal{R}(X)_{u}$.
Hence, any relation in degree $nu$ is 
also a relation of 
$$ 
\bigoplus_{n \in \NN} \mathcal{R}(X)_{nu}.
$$
Since $u \cdot w_i >3$ holds and $X$ does
not contain smooth rational curves,
\cite[Thm.~7.2]{SD} tells us that the ideal 
of relations of this algebra is generated 
in degree $2$.
Thus, there are only essential relations 
of degree $2u$ in $\mathcal{R}(X)$.

In order to determine the dimension of 
$\mathcal{I}(X)_{2u}$ for a minimal 
ideal of relations $\mathcal{I}(X)$, 
note that we have the four generators
$f_{ij}$, where $1 \le i,j \le 2$, of
degree $w_i$, where $1 \le i \le 2$, 
and $k-2$ generators of degree 
$u = w_1+w_2$.
Using Riemann-Roch, we obtain
that $\mathcal{R}(X)_{2u}$ is of dimension 
$4k+2$.
Thus, denoting by $\CC[T]$
the polynomial ring in the above
generators
and by 
$V \subseteq \mathcal{R}(X)_u$ 
the vector space spanned by the $k-2$ 
generators we of degree $u$, 
we obtain
\begin{eqnarray*}
\dim(\mathcal{I}(X)_{2u}) 
& = &
\dim(\CC[T]_{2u})
\ - \ 
\dim(\mathcal{R}(X)_{2u})
\\
& = &
\dim({\rm Sym}^2 V) 
\ + \
4 (k-2)
\ + \
9 
\\
& = & 
\frac{k(k-3)}{2}.
\end{eqnarray*}
\end{proof}

We turn to the cases $w_1^2 = -2$ and 
$w_2^2 = 0,-2$. 
In contrast to the previous 
cases, the number of degrees occuring 
in a minimal system of generators for 
the Cox ring becomes arbitrary large
when $w_1 \mal w_2$ increases.

\begin{proposition}
\label{gens-0-2}
Let $X$ be a K3-surface with  
$\Cl(X) \cong \ZZ w_1 \oplus \ZZ w_2$
and intersection form given by
$w_1^2 = -2$, $w_2^2 = 0$ and
$w_1 \mal w_2 = k \in \NN$.
\begin{enumerate}
\item
The semiample cone $\SAmple(X)$ of $X$ 
is generated by the classes
$kw_1+2w_2$ and $w_2$.
\item
The Cox ring $\mathcal{R}(X)$ 
has generators in degrees 
$w_1$ and $aw_1 + w_2$, where 
$0 \le a \le \lfloor k/2\rfloor$.
\item
If $k > 1$ holds and $k$ is odd, 
then 
the Cox ring $\mathcal{R}(X)$ 
has, in addition to those of~(ii),
generators in degree $kw_1+2w_2$.
\end{enumerate}
\end{proposition}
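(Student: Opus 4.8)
The plan is to treat the three parts in order, using throughout the $(-2)$-curve $E$ representing $w_1$ and its restriction sequences.

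\textbf{Part (i).} Since $\Cl(X)$ contains the class $w_2$ of square $0$, Theorem~\ref{ne} gives that $\Eff(X)$ is polyhedral, and Proposition~\ref{genpic2geneff} (together with Proposition~\ref{ko} for the remaining low $k$) identifies $\Eff(X)=\cone(w_1,w_2)$. I would then pass to the dual cone: a class $xw_1+yw_2$ is nef exactly when it pairs non-negatively with $w_1$ and $w_2$, i.e. $-2x+ky\ge 0$ and $kx\ge 0$, which gives $\Nef(X)=\cone(w_2,\,kw_1+2w_2)$. Finally $w_2$ is base point free (an elliptic pencil) and $kw_1+2w_2$ is nef and big, so by the argument in the proof of Theorem~\ref{k3fingen} every nef class is semiample; hence $\SAmple(X)=\Nef(X)=\cone(w_2,\,kw_1+2w_2)$.

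\textbf{Setup for (ii) and (iii).} By Proposition~\ref{ko} the extremal class $w_1$ is a smooth rational $(-2)$-curve $E$, and $h^0(nw_1)=1$ for all $n$, so $\bigoplus_n\mathcal{R}(X)_{nw_1}=\CC[x_E]$ with $x_E$ the canonical section of $E$. For any divisor $D$, multiplication by $x_E$ yields the restriction sequence
\[
0\;\longrightarrow\;\mathcal{O}_X(D-E)\;\longrightarrow\;\mathcal{O}_X(D)\;\longrightarrow\;\mathcal{O}_E(D)\;\longrightarrow\;0,
\]
where $\mathcal{O}_E(D)$ is a line bundle of degree $D\mal w_1$ on $E\cong\PP^1$. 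The key elementary observation is that restriction to $E$ is multiplicative and kills $x_E$, and I would use it to locate generators: writing a degree as $aw_1+bw_2$, any product of two nonconstant homogeneous elements landing there distributes the $w_2$-coefficient $b$ among the factors, and a factor of $w_2$-coefficient $0$ is a power $x_E^j$ with $j\ge 1$; the degree count $D\mal w_1$ on $E$ then shows that every decomposable element of the relevant degree restricts to $0$ on $E$.

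\textbf{Parts (ii) and (iii).} For $1\le a\le\lfloor k/2\rfloor$ the class $(a-1)w_1+w_2$ is nef and big (or is $w_2$ when $a=1$), so Kawamata--Viehweg vanishing gives $h^1((a-1)w_1+w_2)=0$ and the restriction sequence makes $\mathcal{R}(X)_{aw_1+w_2}\to H^0(\mathcal{O}_{\PP^1}(k-2a))$ surjective; as $k-2a\ge 0$ this is nonzero, while every decomposable of this degree vanishes on $E$, forcing a generator (the case $a=0$ is immediate, $\mathcal{R}(X)_{w_2}$ having no decomposables). For (iii) one has $(kw_1+2w_2)\mal w_1=0$, so $\mathcal{O}_E(kw_1+2w_2)=\mathcal{O}_E$; since $(k-1)w_1+2w_2$ is nef and big for $k>1$, vanishing again makes $\mathcal{R}(X)_{kw_1+2w_2}\to H^0(\mathcal{O}_E)=\CC$ onto. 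The parity now enters: a decomposable restricting nontrivially to $E$ would require two factors of $w_2$-coefficient $1$ with $w_1$-coefficients $a'+a''=k$ and $a',a''\le k/2$, i.e. $a'=a''=k/2$, impossible for odd $k$; hence all decomposables vanish on $E$ while $\mathcal{R}(X)_{kw_1+2w_2}$ does not, and a new generator in degree $kw_1+2w_2$ is forced.

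The main obstacle I anticipate is the bookkeeping in the Setup step, namely making rigorous that every decomposable element of a given degree is divisible by $x_E$ (equivalently restricts to $0$ on $E$); this requires combining the distribution of the $w_2$-coefficient with the signs of $w'\mal w_1$ and $w''\mal w_1$ on $E$. Once this is established, parts (ii) and (iii) reduce to the vanishing $h^1=0$ furnished by Kawamata--Viehweg together with the nonvanishing of the target $H^0$ on $\PP^1$, and the parity dichotomy in (iii) is precisely the failure of $k/2$ to be integral.
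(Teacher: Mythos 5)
Your proposal is correct and follows essentially the same route as the paper: both parts rest on the dual-cone computation for (i), and on combining Kawamata--Viehweg vanishing (to produce a section not divisible by the canonical section of the $(-2)$-curve representing $w_1$) with the degree-splitting combinatorics showing that every decomposable element in the relevant degree is so divisible --- your surjectivity of restriction to $E\cong\PP^1$ is precisely the paper's dimension count $h^0((a-1)w_1+w_2)<h^0(aw_1+w_2)$ read through the same exact sequence, and your parity argument in (iii) is the contrapositive of the paper's observation that one factor must have degree pairing negatively with $w_1$. The only point to tidy is $a=1$: there $(a-1)w_1+w_2=w_2$ is nef but not big, so $h^1(w_2)=0$ must come from Riemann--Roch together with $h^0(w_2)=2$ (as the paper does) rather than from Kawamata--Viehweg itself.
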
 

\begin{proof} 
To verify~(i), note that
$(aw_1+bw_2)\mal w_1 = -2a+kb$ 
and 
$(aw_1+bw_2)\mal w_2 = ka$ hold. 
These intersection products are both 
non-negative if $0\leq a\leq kb/2$ 
holds.
So the nef cone of $X$ is generated 
by the classes 
$kw_1+2w_2$ and $w_2$. 
Since $\SAmple(X)$ is polyhedral, 
the claim follows.

We prove~(ii).
By Proposition~\ref{genpic2geneff}, 
the classes $w_1$ and $w_2$ generate 
the effective cone.
Thus, $\mathcal{R}(X)$ has generators 
in the degrees $w_1$ and $w_2$. 
Note that $h^0(w_1) = 1$ and, 
fixing a generator 
$f_1 \in \mathcal{R}(X)_{w_1}$,
we obtain 
$\mathcal{R}(X)_{nw_1} = \CC f_1^n$.
Moreover, up to a constant, 
$f_1$ occurs in any minimal system
of generators of $\mathcal{R}(X)$;
we fix such a system 
$f_1, \ldots, f_r$.

We show now that $\mathcal{R}(X)$ 
has generators in degree 
$aw_1+w_2$ for any 
$1\leq a\leq [k/2]$.
By assertion~(i), 
the class $a w_1+ w_2$ 
is big and nef for 
$1\leq a\leq [k/2]$.
Using Riemann-Roch and the 
Kawamata-Viehweg vanishing theorem,
we obtain 
$$
h^0((a-1)w_1 + w_2)
\ < \ 
h^0(aw_1+w_2)
\quad
\text{for } 1 < a \le \lfloor k/2\rfloor.
$$
This implies that there exists an
$f \in \mathcal{R}(X)_{aw_1+w_2}$,
which is not a multiple of
$f_1 \in \mathcal{R}(X)_{w_1}$.
The same holds for $a=1$, because
then we have ($k=1$ implies $a=0$)
$$
h^0(w_2)
\ = \ 2,
\qquad
h^0(w_1+w_2)
\ = \ 
k+1
\ > \ 
2.
$$

Suppose that every monomial 
$m \in \mathcal{R}(X)_{aw_1+w_2}$ 
in the $f_i$ is a product $m=m_1m_2$ 
of non-constant $m_i$.
Then $m_1$ or $m_2$ belongs to 
$\mathcal{R}(X)_{bw_1} = \CC f_1^b$,
where $1\leq b\leq a$. 
Then every $f \in \mathcal{R}(X)_{aw_1+w_2}$ 
is a multiple of $f_1$, 
contradicting the previous statement.
Hence some $f_i$, where $i = 2, \ldots, r$ 
has degree $aw_1+w_2$.

We turn to~(iii). 
Then $kw_1+2w_2$ is big and nef.
Reasoning as before, we obtain  
that there is an $f \in \mathcal{R}(X)_{kw_1+2w_2}$
which is not a multiple of $f_1$. 
Suppose that every 
monomial $m\in\mathcal{R}(X)_{kw_1+2w_2}$
in the $f_i$ is a product $m=m_1m_2$ 
of non-constant $m_i$,
Then $m_1$ or $m_2$ belongs to 
$\mathcal{R}(X)_{bw_1+cw_2}$,
where $b/c \ge \lfloor k/2\rfloor+1$. 
Since $(bw_1+cw_2)\mal w_1 = -2b+kc < 0$
holds, every element of 
$\mathcal{R}(X)_{bw_1+cw_2}$
is divisible by $f_1$; 
a contradiction.
Hence some $f_i$, where $i = 2, \ldots, r$ 
has degree $kw_1+2w_2$.
\end{proof}

\begin{proposition}
\label{gens-2-2}
Let $X$ be a K3-surface with  
$\Cl(X) \cong \ZZ w_1 \oplus \ZZ w_2$
and intersection form given by
$w_1^2 = w_2^2 = -2$ and
$w_1 \mal w_2 = k \in \NN$.
\begin{enumerate}
\item
We have $k \ge 3$ and
the semiample cone $\SAmple(X)$ 
is generated by the classes
$kw_1+2w_2$ and $2w_1+kw_2$.
\item
The Cox ring $\mathcal{R}(X)$ 
has generators in degrees 
$aw_1+w_2$ and $w_1 + aw_2$, where 
$0 \le a \le \lfloor k/2\rfloor$.
\item
If $k > 1$ holds and $k$ is odd, 
then 
the Cox ring $\mathcal{R}(X)$ 
has, in addition to those of~(ii),
generators in degree $kw_1+2w_2$
and $2w_1+kw_2$.
\end{enumerate}
\end{proposition}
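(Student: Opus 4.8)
The plan is to follow the pattern of the proof of Proposition~\ref{gens-0-2}, using throughout that the intersection form is now symmetric under interchanging $w_1$ and $w_2$: any statement proved for a degree $aw_1+bw_2$ transfers verbatim to $bw_1+aw_2$, so in parts~(ii) and~(iii) it will suffice to treat the degrees $aw_1+w_2$ and $kw_1+2w_2$. As in the note following Proposition~\ref{genpic2geneff}, I first arrange that both $w_1$ and $w_2$ are effective, so that Proposition~\ref{genpic2geneff} applies and gives $\Eff(X)=\cone(w_1,w_2)$.

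For~(i), I would observe that the Picard lattice of an algebraic K3-surface of Picard number two has signature $(1,1)$ by the Hodge index theorem, so its Gram determinant $4-k^2$ must be negative; this forces $k\geq 3$. The semiample cone is then computed exactly as in Proposition~\ref{gens-0-2}: from $(aw_1+bw_2)\mal w_1=-2a+kb$ and $(aw_1+bw_2)\mal w_2=ka-2b$ one reads off that a class is nef precisely when $2a\leq kb$ and $2b\leq ka$, whence $\Nef(X)=\cone(kw_1+2w_2,\,2w_1+kw_2)$. Since $\Eff(X)$ is polyhedral, Theorem~\ref{k3fingen} (equivalently Corollary~\ref{smoothfingen}) gives $\SAmple(X)=\Nef(X)$, which is the assertion.

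For~(ii), I would fix the (up to scalar unique) sections $f_i\in\mathcal{R}(X)_{w_i}$, record that $h^0(w_i)=1$, that $\mathcal{R}(X)_{nw_i}=\CC f_i^n$, and that $f_1,f_2$ occur in every minimal system of generators. The class $aw_1+w_2$ is big and nef for $1\leq a\leq\lfloor k/2\rfloor$, so Riemann--Roch together with Kawamata--Viehweg vanishing gives $h^0(aw_1+w_2)-h^0((a-1)w_1+w_2)=k-2a+1>0$; hence $\mathcal{R}(X)_{aw_1+w_2}$ contains an element not divisible by $f_1$. On the other hand, since $w_1,w_2$ is a $\ZZ$-basis, every generator degree is of the form $cw_1+dw_2$ with $c,d\geq 0$; consequently, in any factorization of a degree-$(aw_1+w_2)$ monomial into two nonconstant factors one factor lies in some $\mathcal{R}(X)_{bw_1}=\CC f_1^b$ with $b\geq 1$ and is thus a positive power of $f_1$, so every such monomial is divisible by $f_1$. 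Comparing the two observations forces a generator in degree $aw_1+w_2$; the degree $w_2$ (case $a=0$) is covered by $f_2$.

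For~(iii), with $k$ odd and $k\geq 3$, I would treat $kw_1+2w_2$, which is big and nef with self-intersection $2(k^2-4)>0$. The same dimension count yields $h^0(kw_1+2w_2)-h^0((k-1)w_1+2w_2)=(k^2-2)-(k^2-3)=1>0$, so there is an element not divisible by $f_1$. For the monomial step I would split a degree-$(kw_1+2w_2)$ monomial as a product of two nonconstant factors of degrees $b_iw_1+c_iw_2$ with $b_1+b_2=k$ and $c_1+c_2=2$; using that $k$ is odd, one factor satisfies $b_i/c_i\geq\lfloor k/2\rfloor+1>k/2$, so its class $u=b_iw_1+c_iw_2$ has $u\mal w_1=-2b_i+kc_i<0$ and is therefore divisible by $f_1$ (either because $c_i=0$, or because $u$ meets the irreducible curve $\div(f_1)$ negatively). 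I expect this combinatorial step to be the main obstacle, since it is exactly here that the parity of $k$ enters: when $k$ is even the balanced splitting $c_1=c_2=1$, $b_1=b_2=k/2$ gives ratio $k/2$ and avoids the bound, reflecting that then $kw_1+2w_2$ is twice the generator $\tfrac{k}{2}w_1+w_2$ of~(ii) and no new generator is forced. The class $2w_1+kw_2$ follows by the symmetry $w_1\leftrightarrow w_2$.
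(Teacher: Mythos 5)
Your proposal is correct and follows essentially the same route as the paper, which proves part~(i) via the Hodge index theorem and the intersection computation, and settles~(ii) and~(iii) by explicitly invoking ``the same arguments as in Proposition~\ref{gens-0-2}'' (the Riemann--Roch/Kawamata--Viehweg dimension count $h^0((a-1)w_1+w_2)<h^0(aw_1+w_2)$ combined with the monomial-factorization argument forcing divisibility by $f_1$). Your write-up merely fills in the details the paper leaves implicit, including the parity analysis in~(iii) and the symmetry reduction $w_1\leftrightarrow w_2$.
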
 

\begin{proof}
By the Hodge Index Theorem,
$\Cl(X)$ has signature $(1,1)$, 
which implies $k\geq 3$.
Determining the semiample cone 
runs as in the proof of 
Proposition~\ref{gens-0-2}.

As to the remaining statements,
note that the semigroup 
$\SAmple(X)\cap\ZZ^2$ 
is generated by 
$aw_1+w_2$ and $w_1+aw_2$ 
with $0 \le a\le \lfloor k/2\rfloor$ 
if $k$ is even,
and by the same classes plus 
the two the extremal 
rays if $k$ is odd.
Reasoning as in the proof of 
Proposition~\ref{gens-0-2}
we obtain
\begin{eqnarray*}
h^0((a-1)w_1+w_2) 
& < & 
h^0(aw_1+w_2)
\end{eqnarray*}
whenever $0\leq a\leq \lfloor k/2\rfloor$
holds. 
This formula also holds, when 
$w_1$ and $w_2$ are exchanged. 
Now the same arguments as used in 
the proof of Proposition~\ref{gens-0-2} 
give the assertion.
\end{proof}

\begin{example}
Let $X$ be a K3-surface with  
$\Cl(X) \cong \ZZ w_1 \oplus \ZZ w_2$
and intersection form given by
$w_1^2 = w_2^2 = -2$ and
$w_1 \mal w_2 = 3$.
Then the  Cox ring $\mathcal{R}(X)$ 
has generators 
$$
f_{1,0}, f_{0,1}, f_{1,1}, g_{1,1}, f_{2,3}, f_{3,2}
$$
in the corresponding degrees by 
Proposition~\ref{gens-2-2} and its proof.
A monomial basis of 
$\operatorname{Sym}^3\mathcal{R}(X)_{w_1+w_2}$, 
plus $f_{2,3}f_{0,1}$ and $f_{3,2}f_{1,0}$, 
give $12$ linearly dependent elements of 
$\mathcal{R}(X)_{3w_1+3w_2}$ 
since this space has dimension $11$ by 
the Riemann Roch theorem. 
This means that $\mathcal{R}(X)$ 
has a relation in degree $3w_1+3w_2$.

Similarly, a monomial basis of 
$\operatorname{Sym}^5\mathcal{R}(X)_{w_1+w_2}$, 
plus $f_{2,3}f_{3,2}$ and the product of 
$f_{2,3}f_{1,0}$ for a monomial basis of 
$\operatorname{Sym}^2\mathcal{R}(X)_{w_1+w_2}$ 
give $28$ monomials. 
These are linearly dependent since the dimension 
of $\mathcal{R}(X)_{5w_1+5w_2}$ is $27$ by 
the Riemann Roch theorem. 
This means that $\mathcal{R}(X)$ has a 
relation in degree $5w_1+5w_2$.

We now give a geometric interpretation 
for generators and relations.
The map $\pi \colon X\to \PP^2$ associated 
to $w_1+w_2$ is a double cover branched 
along a smooth plane sextic, see~\cite{SD}.
Observe that $f_{1,0}f_{0,1}=\pi^*(s)$ and  
$f_{2,3}f_{3,2}=\pi^*(t)$, where $s=0$ is a 
line and $t=0$ a quintic in $\PP^2$.
The second equality gives a relation 
in degree $5w_1+5w_2$.
\end{example}

The assumptions $w_i^2 \in \{0,-2\}$ 
made in Theorem~\ref{gen-2}
imply that the primitive generators of the 
effective cone form a basis of the divisor 
class group.
However the techniques of its proof allow 
as well to treat, for example, the following
case, where the primitive generators of the 
effective cone span a sublattice of index
two in the divisor class group.

\begin{proposition}
\label{pic-eff}
Let $X$ be a K3-surface with  
$\Cl(X) \cong \ZZ w_1 \oplus \ZZ w_2$
and intersection form given by
$w_1^2 = 4$, $w_2^2 = -4$ and 
$w_1 \mal w_2 = 0$.
\begin{enumerate}
\item
The effective cone of $X$ is generated by 
$u_1 := w_1 + w_2$ and $u_2 := w_1 - w_2$.
\item
The Cox ring $\mathcal{R}(X)$ is generated
in degrees $u_1$, $u_2$ and $w_1$.
\item 
Any minimal ideal of relations 
of $\mathcal{R}(X)$ 
is generated in degree $2w_1$.
\end{enumerate}
\end{proposition}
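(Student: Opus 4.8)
The plan is to exploit the fact that, although $w_1$ and $w_2$ have self-intersection $\pm 4$, the boundary classes $u_1=w_1+w_2$ and $u_2=w_1-w_2$ satisfy $u_1^2=u_2^2=0$ and $u_1\mal u_2=8$, so that $(u_1,u_2)$ is formally in the situation of Theorem~\ref{gen-2} with $k=8$; the one new feature is that $u_1,u_2$ span only the index-two sublattice $\ZZ u_1\oplus\ZZ u_2\subseteq\Cl(X)$, the missing half-class being $w_1=\tfrac12(u_1+u_2)$. For~(i) I would first note that $\Cl(X)$ is even and that $(xw_1+yw_2)^2=4(x^2-y^2)$ never equals $-2$ for integers $x,y$; hence $X$ carries no $(-2)$-class and, a fortiori, no $(-2)$-curve. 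Thus every irreducible curve has non-negative self-intersection, so $\Eff(X)$ equals the closure of the positive cone $\{D:D^2\ge 0\}$ (big classes being effective by Riemann--Roch), whose two boundary rays are the isotropic directions $\QQ_{\ge 0}u_1$ and $\QQ_{\ge 0}u_2$. Both $u_i$ are nef, nonzero and isotropic, hence effective, giving $\Eff(X)=\cone(u_1,u_2)$.

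For~(ii) I would record three linear systems. Each $u_i$ is nef and isotropic, so $h^0(u_i)=2$ and $|u_i|$ is an elliptic pencil; consequently $\mathcal R(X,u_i)=\CC[f_{i1},f_{i2}]$ is a polynomial ring. The class $w_1$ lies in the interior of $\cone(u_1,u_2)$, hence is ample, with $h^0(w_1)=4$ by Riemann--Roch; since the only elliptic pencils are $|u_1|,|u_2|$ and $u_i\mal w_1=4>2$, the criterion of Saint-Donat~\cite{SD} shows $w_1$ is very ample, embedding $X$ as a quartic surface in $\PP^3$, so that $\mathcal R(X,w_1)=\CC[g_1,\dots,g_4]/\bangle q$ is generated in degree one. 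A short computation shows that $\Eff(X)\cap\Cl(X)$ is minimally generated by $u_1,u_2,w_1$. To lift this to the Cox ring I would run the reduction of Theorem~\ref{gen-2}: writing a degree as $\alpha u_1+\beta u_2$ with $\alpha\ge\beta$ and applying Lemma~\ref{gens} to the pencil $(f_{11},f_{12})$, multiplication by these two sections is surjective because the shifted class $(\alpha-2)u_1+\beta u_2$ has $h^1=0$ throughout such a reduction (by Kawamata--Viehweg when it is big and nef, and by a direct Euler-characteristic computation in the boundary case). Peeling off the larger of $\alpha,\beta$ therefore transports every degree either to a ray $n u_i$, handled by $\CC[f_{i1},f_{i2}]$, or to a diagonal multiple $n w_1$, handled by the quartic coordinate ring; in all cases one lands in degrees $u_1,u_2,w_1$.

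For~(iii) the eight generators are $f_{11},f_{12},f_{21},f_{22}$ in degrees $u_1,u_2$ together with $g_1,\dots,g_4$ in degree $w_1$. Since $w_1$ is very ample, $\Sym^2\mathcal R(X)_{w_1}\to\mathcal R(X)_{2w_1}$ is an isomorphism of ten-dimensional spaces, so each product $f_{1i}f_{2j}$ equals a unique quadric $Q_{ij}(g)$ and the four equations $r_{ij}:=f_{1i}f_{2j}-Q_{ij}(g)$ are relations in degree $2w_1=u_1+u_2$. To see that they generate the whole ideal I would apply Theorem~\ref{rels} with the first two generators taken to be the pencil sections $f_{11},f_{12}$, which is legitimate as $u_1^2=0$, and symmetrically with the $u_2$-pencil. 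The maps $\varphi_{w,i}$ are surjective whenever the relevant shift has $h^1=0$ (Lemma~\ref{gens}) or is a pure ray multiple (the polynomial-ring argument), and this covers every effective degree except the small diagonal classes $3w_1,4w_1,5w_1$; there, multiplying by $f_{1i}$ moves off the diagonal, so neither pencil certifies non-essentiality.

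The main obstacle is precisely the diagonal degree $4w_1$, which carries the defining quartic $q(g)$ of $X\subseteq\PP^3$ and which I must show is already a consequence of the degree-$2w_1$ relations. This is settled by the Pl\"ucker-type identity
\[
Q_{11}Q_{22}-Q_{12}Q_{21}
\;=\;
f_{11}f_{21}\,f_{12}f_{22}-f_{11}f_{22}\,f_{12}f_{21}
\;=\;0,
\]
valid identically in $\mathcal R(X)$: the quartic $Q_{11}Q_{22}-Q_{12}Q_{21}$ in the $g_i$ thus vanishes on $X$, hence is a nonzero scalar multiple of $q$, so that $q\in\bangle{r_{11},r_{12},r_{21},r_{22}}$; the relations at $3w_1$ and $5w_1$ then follow by multiplying the $r_{ij}$ by a generator. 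Together with the non-essentiality of all other degrees this shows that a minimal ideal of relations is generated in degree $2w_1$, as claimed.
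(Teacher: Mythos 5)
Your parts (i) and (ii) are correct. For (i) you argue that the lattice contains no classes of square $-2$, so every irreducible curve is nef and $\Eff(X)$ is the closed positive cone, whose boundary rays are the isotropic classes $u_1,u_2$; the paper instead takes the primitive generators $v_1,v_2$ of the polyhedral cone $\Eff(X)$, uses Proposition~\ref{ko} to force $v_1^2=v_2^2=0$, and identifies $\{v_1,v_2\}=\{u_1,u_2\}$ by a short computation. Both routes work. Your part (ii) is essentially the paper's argument: peel off the elliptic pencils via Lemma~\ref{gens} (Kawamata--Viehweg in the big-and-nef range, ad hoc vanishing or ray arguments at the boundary), and handle the rays $nu_i$ and the diagonal $nw_1$ by one-generation of $\mathcal{R}(X,u_i)$ and $\mathcal{R}(X,w_1)$; your justification of the latter via very ampleness of $w_1$ and the quartic model $X\subseteq \PP^3$ replaces the paper's appeal to Proposition~\ref{gen1} and is legitimate.

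In part (iii), however, there is a genuine gap. You correctly identify that Theorem~\ref{rels} cannot certify the diagonal degrees $3w_1,4w_1,5w_1$ (indeed, for $w=4w_1$ and a generator $g_k$ of degree $w_1$ the map $\varphi_{w,i}$ has source $\mathcal{R}(X)_{3w_1-u_1}^{\oplus 2}$ of dimension $16$ and target $\mathcal{R}(X)_{3w_1}$ of dimension $20$, so it is never surjective -- a point on which the paper's own proof, claiming non-essentiality ``unless $a=b=1$'', is unjustified), and your Pl\"ucker identity giving $q\in\bangle{r_{11},r_{12},r_{21},r_{22}}=:J$ is exactly the right new ingredient. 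But what you then write does not close the argument: to conclude that a minimal ideal of relations is generated in degree $2w_1$ you must show that \emph{every} relation of degrees $3w_1,4w_1,5w_1$ lies in $J$, whereas you only verify this for particular elements ($q$, and products of the $r_{ij}$ with generators). In degree $4w_1$, for example, the space of relations has dimension $84-34=50$, and membership of the single element $q$ says nothing about the rest; ``multiplying the $r_{ij}$ by a generator'' at $3w_1$ and $5w_1$ has the same defect. The missing step is fortunately short: a monomial of degree $nw_1$ contains equally many factors of degree $u_1$ and of degree $u_2$, hence is congruent modulo $J$ to a polynomial in $g_1,\dots,g_4$; therefore any relation of degree $nw_1$ is congruent modulo $J$ to an element of $\CC[g_1,\dots,g_4]\cap\mathcal{I}(X)=\bangle{q}$, and $q\in J$ then yields $\mathcal{I}(X)_{nw_1}\subseteq J$ for all $n$, with the required representation by nonconstant coefficients when $n\geq 3$. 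Finally, your assertion that $Q_{11}Q_{22}-Q_{12}Q_{21}$ is a \emph{nonzero} multiple of $q$ also needs a word: if it vanished identically, then, since each $Q_{ij}$ is an irreducible quadric (its restriction $f_{1i}f_{2j}$ to $X$ is a product of two non-associated primes of degrees $u_1,u_2$ in the factorial ring $\mathcal{R}(X)$, so $Q_{ij}$ cannot split into linear forms), unique factorization in $\CC[g_1,\dots,g_4]$ would force two of the $Q_{ij}$ to be proportional, making two of the pencil sections proportional -- a contradiction.
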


\begin{proof}
Note that we have $u_1^2=u_2^2=0$.
Thus, by Riemann-Roch, we can assume 
that $u_1$ and $u_2$ are effective.
Moreover, Proposition~\ref{ne}
tells us that $\Eff(X)$ is polyhedral; 
we denote by $v_1$ and $v_2$ its primitive 
generators. Then we have $v_i^2 \in 4 \ZZ$
and thus Proposition~\ref{ko} gives
$v_1^2=v_2^2=0$. Choosing presentations
$u_i = a_iv_1 + b_iv_2$ with nonnegative 
$a_i, b_i \in \QQ$, we obtain
$$
8 
\ = \ 
u_1 \mal u_2 
\ = \ 
(a_1v_1 + b_1v_2) \mal (a_2v_1+b_2v_2)
\ = \ 
(a_1b_2 + a_2b_1) \mal v_1 \mal v_2,
$$
$$
0 
\ = \ 
u_i^2 
\ = \ 
(a_iv_1 + b_iv_2)^2
\ = \ 
2a_ib_i \, v_1 \mal v_2.
$$
The first identity gives $v_1 \mal v_2 \ne 0$ 
and, thus, the second one shows $a_ib_i = 0$.
As a consequence, we obtain 
$\{u_1,u_2\}=\{v_1,v_2\}$.
This proves the first assertion.

As to the second one, note that any effective 
divisor class $w \in \Cl(X)$ can be written as
$$
w
\ = \ 
 au_1 + bu_2 + cw_1,
\quad
\text{where }
a,b \in \NN, \ c=0,1.
$$
Observe that $u_1$ and $u_2$ are classes of 
elliptic curves. Moreover, we have $h^0(u_i)=2$
and thus $\mathcal{R}(X)_{u_i}$ has a basis
of the form  $(f_{i1},f_{i2})$.

We now proceed as in the proof of 
Theorem~\ref{gen-2}. 
If $a \ge 3$ and $b \ge 1$ hold, 
then $w - 2u_1$ is nef and big, 
and thus we have $h^1(w - 2u_1)=0$. 
Since $u_1^2=0$ holds, Lemma~\ref{gens} 
shows that the sections of $w$ are 
polynomials in $f_{11}$, $f_{12}$ 
and elements of $\mathcal{R}(X)_{w-u_1}$.

Iterating this procedure and interchanging 
the roles of $a$ and $b$, 
we reduce to study classes $w$ with $a+b\leq 4$. 
A case by case analysis now shows that 
$\mathcal{R}(X)$ is generated in degrees 
$u_1$, $u_2$ and $w_1$. 
In the following table we briefly provide 
the reason why $h^1(w-2u_1)=0$ holds,
when $a \ge b$ and $w-2u_1$ is not nef and big.
\begin{center}
\begin{tabular}{c|c|c|l}
$a$ & $b$ & $c$ & $h^1(w-2w_1)=0$ because\\[3pt]
\midrule
$1$ & $1$ & $0$ & $\mathcal{R}(X,w_1)$ is $1$-generated \\[3pt]
$1$ & $0$ & $1$ & $h^1(-u_2)=0$ \\[3pt]
$1$ & $1$ & $1$ & $\mathcal{R}(X,w_1)$ is $1$-generated \\[3pt]
$2$ & $0$ & $0$ & $\mathcal{R}(X,u_1)$ is $1$-generated \\[3pt]
$2$ & $1$ & $0$ & $h^1(u_2)=0$ \\[3pt]
\end{tabular}
\end{center}

In a similar way, Theorem~\ref{rels} and Lemma~\ref{gens} 
imply that $ w$ is not essential unless $a=b=1$.
Since $w_1$ is the class of a smooth irreducible 
curve of genus at most one, Proposition~\ref{gen1}
yields that $\mathcal{R}(X)_{w_1}$ is generated in degree one. 
This implies that the monomials $f_{1i}f_{2k}$ are 
quadratic functions in the $f_i \in \mathcal{R}(X)_{w_1}$:
\begin{eqnarray*}
f_{1i}f_{2k}
& = & 
q_{ik}(f_0,f_1,f_2,f_3),
\end{eqnarray*}
where $q_{ik}$ is a homogeneous polynomial of degree two. 
This gives $4$ independent relations in degree $(2,0)$ 
as can be checked since $h^0(2w_1)=10$ and the number 
of monomials of type $f_if_j$ and $f_{1i}f_{2k}$ is $14$.
\end{proof}

\section{Cox rings and coverings}
\label{sec:abcov}

In this section, we investigate the 
effect of certain, e.g.~cyclic,
coverings $\pi \colon X \to Y$ 
on the Cox ring.
Among other things, we obtain 
that finite generation of the Cox 
ring is preserved, provided that 
$\pi^*(\Cl(Y))$ is of finite index in 
$\Cl(X)$,
see Proposition~\ref{prop:abcovfingen}.
In the whole section, we work 
over an algebraically closed 
field $\KK$ of characteristic zero.
First, we make precise, which type of 
coverings we will treat.

\begin{construction}
\label{constr:abcov}
Let $Y$ be a normal variety and
$D_1, \ldots, D_r \in \CaDiv(Y)$ be 
a linearly independent collection of 
Cartier divisors.
Denote by $M^+ \subseteq \CaDiv(Y)$ the 
semigroup generated by the divisors
$D_1, \ldots, D_r$
and set
$$ 
Y(D_1, \ldots, D_r) 
\ := \
\Spec_Y (\mathcal{A}),
\qquad \qquad
\mathcal{A}
\ := \ 
\bigoplus_{D \in M^+}  \mathcal{O}_Y(-D).
$$
Then the inclusion $\mathcal{O}_Y \to \mathcal{A}$
defines a morphism $\alpha \colon Y(D_1, \ldots, D_r) \to Y$,
which is a (split) vector bundle of rank $r$ over $Y$.
Similarly, with $n_1, \ldots, n_r \in \ZZ_{>0}$
and $E_i := n_iD_i$, 
denote by $N^+ \subseteq \CaDiv(Y)$ the 
semigroup generated by $E_1, \ldots, E_r$.
Setting
$$ 
Y(E_1, \ldots, E_r) 
\ := \
\Spec_Y (\mathcal{B}),
\qquad \qquad
\mathcal{B}
\ := \ 
\bigoplus_{E \in N^+}  \mathcal{O}_Y(-E)
$$
gives a further (split) vector bundle 
$\beta \colon Y(E_1, \ldots, E_r) \to Y$ 
of rank $r$ over $Y$.
The inclusion $\mathcal{B} \subseteq \mathcal{A}$ 
defines a morphism 
$\kappa \colon Y(D_1, \ldots, D_r) \to Y(E_1, \ldots, E_r)$.
Now, let $\sigma \colon Y \to Y(E_1, \ldots, E_r)$
be a section such that all projections of 
$\sigma$ to the factors $Y(E_i)$ are nontrivial.
Then we define 
$$ 
X
\ := \ 
\kappa^{-1}(\sigma(Y))
\ \subseteq \ 
Y(D_1, \ldots, D_r). 
$$
Restricting $\alpha$ gives a morphism $\pi \colon X \to Y$;
which we call an {\em abelian covering\/} of $Y$.
Note that $\pi \colon X \to Y$ is the quotient for the 
action of the abelian group 
$\ZZ/n_1\ZZ \oplus \ldots \oplus \ZZ/n_r\ZZ$ on 
$X$ defined by the inclusion $\mathcal{B} \subseteq \mathcal{A}$ 
of graded algebras.
\end{construction}

\begin{remark}
For a smooth variety $Y$, 
Cartier divisors $D$ 
and $E := nD$ on $Y$
and a section 
$\sigma \colon Y \to Y(E)$ 
with non-trivial 
reduced divisor $B$, 
Construction~\ref{constr:abcov} 
gives a branched 
$n$-cyclic covering
of~Y, see~\cite[Sec.~1.17]{BPV}.
\end{remark}

In order to formulate our first result,
we need the following pullback construction 
for Weil divisors under an abelian 
covering $\pi \colon X \to Y$ 
of normal varieties.
Given $D \in \WDiv(Y)$, consider the 
restriction $D'$ of $D$ to the set 
$Y' \subseteq Y$ of smooth points and 
take the usual pullback $\pi^*(D')$
on $\pi^{-1}(Y')$. Since $\pi$ is finite,
the complement $X \setminus \pi^{-1}(Y')$
is of codimension at least two 
in $X$ and hence $\pi^*(D')$
uniquely extends to a Weil divisor 
$\pi^*(D)$ on $X$.

\begin{proposition}
\label{prop:abcov}
Let $\pi \colon X \to Y$ be 
an abelian covering as 
in~\ref{constr:abcov},
assume that $X$ is normal,
and let $K \subseteq \WDiv(Y)$ be
a subgroup containing 
$D_1, \ldots, D_r$ 
of~\ref{constr:abcov}.
Set
$$
\mathcal{S}_Y
\ := \ 
\bigoplus_{D \in K}
\mathcal{O}_Y(D),
\qquad\qquad
\mathcal{S}_X
\ := \ 
\bigoplus_{D \in K}
\mathcal{O}_X(\pi^*D).
$$
Then setting $\deg(T_i) := D_i$ turns 
$\mathcal{S}_Y[T_1, \ldots, T_r]$
into a $K$-graded sheaf of 
$\mathcal{O}_Y$-algebras and  
there is a $K$-graded isomorphism
of sheaves
\begin{eqnarray*}
\pi_* \mathcal{S}_X
& \cong &
\mathcal{S}_Y[T_1, \ldots, T_r]
/
\bangle{T_1^{n_1}-g_1, \ldots, T_r^{n_r}-g_r},
\end{eqnarray*}
where $g_i \in \Gamma(Y,\mathcal{O}(E_i))$ are 
sections
such that the branch divisor $B$ of the covering 
$\pi \colon X \to Y$ is given as 
\begin{eqnarray*}
B 
& = & 
\div(g_1) + \ldots + \div(g_r).
\end{eqnarray*}
\end{proposition}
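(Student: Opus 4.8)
The plan is to reduce everything to the case of the structure sheaf by computing $\pi_*\mathcal{O}_X$ directly from Construction~\ref{constr:abcov}, and then to upgrade this to arbitrary $K$-degrees by a reflexivity argument over the smooth locus of $Y$. First I would unwind the construction. Recall that $\mathcal{A}=\bigoplus_{a\in\NN^r}\mathcal{O}_Y(-\sum_i a_iD_i)$ is free over its subalgebra $\mathcal{B}=\bigoplus_{b\in\NN^r}\mathcal{O}_Y(-\sum_i b_iE_i)$ on the reduced monomials $T^a$ with $0\le a_i<n_i$, the inclusion $\mathcal{B}\subseteq\mathcal{A}$ being $\kappa^*S_i=T_i^{n_i}$ on generators. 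The section $\sigma$ is an element of $\Gamma(Y,\bigoplus_i\mathcal{O}_Y(E_i))$, hence a tuple $(g_1,\ldots,g_r)$ with $g_i\in\Gamma(Y,\mathcal{O}_Y(E_i))$ each nonzero by the hypothesis on $\sigma$, and it corresponds to an algebra homomorphism $\sigma^\#\colon\mathcal{B}\to\mathcal{O}_Y$ sending the $i$-th generator to $g_i$. Thus $X=\kappa^{-1}(\sigma(Y))$ satisfies $\pi_*\mathcal{O}_X=\mathcal{A}\otimes_{\mathcal{B},\sigma^\#}\mathcal{O}_Y=\mathcal{A}/\bangle{T_1^{n_1}-g_1,\ldots,T_r^{n_r}-g_r}$, and base change along $\sigma^\#$ shows this quotient is free over $\mathcal{O}_Y$ on the reduced monomials. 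Reading off the $\mathcal{A}$-grading gives
\[
\pi_*\mathcal{O}_X \ = \ \bigoplus_{0\le a_i<n_i}\mathcal{O}_Y\bigl(-\sum_i a_iD_i\bigr)\,T^a,
\]
which is the asserted isomorphism in $K$-degree zero. The same local model, via the Jacobian criterion for $T_i^{n_i}-g_i$ (equivalently the cyclic-cover description in~\cite{BPV}), identifies the branch divisor as $B=\div(g_1)+\cdots+\div(g_r)$.

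Next I would introduce the comparison morphism. Since $D_i\in K$, setting $\deg(T_i):=D_i$ makes $\mathcal{S}_Y[T_1,\ldots,T_r]$ a $K$-graded sheaf of $\mathcal{O}_Y$-algebras, and as $E_i=n_iD_i\in K$ the relations $T_i^{n_i}-g_i$ are homogeneous of degree $E_i$. I would define a $K$-graded $\mathcal{O}_Y$-algebra homomorphism $\Phi\colon\mathcal{S}_Y[T_1,\ldots,T_r]\to\pi_*\mathcal{S}_X$ by pulling back sections on the summand $\mathcal{S}_Y=\bigoplus_{D\in K}\mathcal{O}_Y(D)$ (the adjunction maps $\mathcal{O}_Y(D)\to\pi_*\mathcal{O}_X(\pi^*D)$) and sending $T_i$ to the tautological section $\tau_i\in\Gamma(X,\mathcal{O}_X(\pi^*D_i))$ coming from the $i$-th fibre coordinate and characterized by $\tau_i^{n_i}=\pi^*g_i$. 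As $\Phi(T_i^{n_i}-g_i)=\tau_i^{n_i}-\pi^*g_i=0$, the map $\Phi$ descends to
\[
\overline{\Phi}\colon\ \mathcal{S}_Y[T_1,\ldots,T_r]/\bangle{T_1^{n_1}-g_1,\ldots,T_r^{n_r}-g_r}\ \lra\ \pi_*\mathcal{S}_X,
\]
and it remains to prove that $\overline{\Phi}$ is an isomorphism.

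The hard part will be passing from Cartier to Weil divisors, that is, showing $\overline{\Phi}$ is an isomorphism in every $K$-degree $w$ when $w$ is not Cartier. My plan is a reflexivity argument. In degree $w$ the source is $\bigoplus_{0\le a_i<n_i}\mathcal{O}_Y(w-\sum_i a_iD_i)$ and the target is $\pi_*\mathcal{O}_X(\pi^*w)$; both are reflexive $\mathcal{O}_Y$-modules, the former as a finite sum of divisorial sheaves on the normal variety $Y$, the latter as the pushforward under the finite morphism $\pi$ of the divisorial sheaf $\mathcal{O}_X(\pi^*w)$ on the normal variety $X$ (finite maps preserve codimension, so $\pi_*$ preserves the $S_2$-property). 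Hence it suffices to check that $\overline{\Phi}$ is an isomorphism over the smooth locus $Y'\subseteq Y$, whose complement has codimension at least two, a morphism of reflexive sheaves that is an isomorphism in codimension one being an isomorphism. Setting $X':=\pi^{-1}(Y')$, on $Y'$ the class $w$ is Cartier, so $\pi^*w$ is Cartier on $X'$ and the projection formula together with the degree-zero computation gives $\pi_*\mathcal{O}_{X'}(\pi^*w)\cong\mathcal{O}_{Y'}(w)\otimes\pi_*\mathcal{O}_{X'}=\bigoplus_{0\le a_i<n_i}\mathcal{O}_{Y'}(w-\sum_i a_iD_i)$; one checks that under these identifications $\overline{\Phi}$ matches $T^a$ with the $a$-th summand and is the tautological isomorphism. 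This yields the claimed $K$-graded isomorphism over all of $Y$, and compatibility with the algebra structures is automatic, since both are determined by their restriction to $Y'$ by normality.

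I expect two points to demand the most care. The first is the grading bookkeeping reconciling the $\NN^r$-grading of $\mathcal{A}$, in which the degree-$D_i$ piece is $\mathcal{O}_Y(-D_i)$, with the $K$-grading of $\mathcal{S}_Y$, in which it is $\mathcal{O}_Y(+D_i)$; this is precisely what makes the shifts $w-\sum_i a_iD_i$ come out correctly in both descriptions. The second is the justification that $\pi_*$ of a divisorial sheaf under the finite map $\pi$ is again reflexive, which legitimizes the reduction to the smooth locus and the simultaneous transport of the module structure, the grading, and the $\mathcal{O}_Y$-algebra structure across the codimension-two complement.
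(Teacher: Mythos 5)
Your proof is correct, but it takes a genuinely different route from the paper's for the core computation, so it is worth comparing the two. Both arguments rest on the same two pillars: the projection formula to reduce everything to $\pi_*\mathcal{O}_X$, and restriction to the smooth locus $Y'\subseteq Y$ to deal with classes in $K$ that are only Weil. The paper dispatches the second point at the very outset (``we may assume $K$ consists of Cartier divisors'', since both $\mathcal{S}_Y$ and $\pi_*\mathcal{S}_X$ have sections determined over $Y'$), and then computes $\pi_*\mathcal{O}_X$ by pulling the entire covering diagram back to the torsor $\t{Y}=\Spec_Y(\mathcal{S}_Y)$: there the bundles $q^*D_i$ acquire explicitly glued global frames $f_i$, the preimage $\t{X}$ is cut out by the global equations $f_i^{n_i}-\t{\alpha}^*g_i$, and the presentation of the whole graded algebra descends at once by taking degree-zero parts, using that the ideal sheaves are generated in degree zero. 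You instead avoid the torsor entirely: you compute $\pi_*\mathcal{O}_X$ directly as the base change $\mathcal{A}\otimes_{\mathcal{B},\sigma^{\#}}\mathcal{O}_Y$, which exhibits the classical cyclic-cover decomposition $\bigoplus_{0\le a_i<n_i}\mathcal{O}_Y(-\sum_i a_iD_i)$ explicitly, then build a global graded comparison morphism via the tautological sections $\tau_i$ with $\tau_i^{n_i}=\pi^*g_i$, and finally check it is an isomorphism degree by degree, using reflexivity of divisorial sheaves and of their pushforwards under the finite map $\pi$ to reduce to $Y'$, where the projection formula closes the argument. Your approach is more elementary and self-contained, and the explicit eigensheaf decomposition it produces is useful in its own right; the paper's torsor argument is heavier but fits the Cox-ring/torsor machinery used throughout the paper and packages all degrees simultaneously rather than one at a time. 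Two small points to tighten in your write-up: globally $\mathcal{A}$ is not free over $\mathcal{B}$ on the monomials $T^a$ but is a direct sum of the twists $\mathcal{B}\otimes\mathcal{O}_Y(-\sum_i a_iD_i)$ (your displayed formula already reflects this, so nothing breaks), and the reflexivity of $\pi_*$ of a divisorial sheaf deserves the one-line justification that a finite surjection preserves codimension, so sections extend across $\pi^{-1}$ of codimension-two sets.
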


\begin{proof}
Note that for any open set $V \subseteq Y$
and its intersection $V' := V \cap Y'$ with 
the set $Y' \subseteq Y$ of smooth points, 
the sections of $\mathcal{S}_Y$ over $V$
and $V'$ coincide
and also the sections of $\pi_* \mathcal{S}_X$ 
over $V$ and $V'$ coincide.
Hence, we may assume that $K \subseteq \WDiv(Y)$
consists of Cartier divisors.

A first step is to express the direct image 
$\pi_* \mathcal{S}_X$ 
in terms of $\pi_* \mathcal{O}_X$ and data living 
on $Y$.
Using the projection formula, we obtain
\begin{equation}
\label{eqn:cyclic1}
\pi_* \mathcal{S}_X 
\ = \
\pi_* 
\bigoplus_{D \in K} \mathcal{O}_X(\pi^*D)
\ \cong \ 
\bigoplus_{D \in K} 
\mathcal{O}_Y(D) \otimes_{\mathcal{O}_Y} \pi_* \mathcal{O}_X
\ \cong \
\mathcal{S}_Y \otimes_{\mathcal{O}_Y} \pi_* \mathcal{O}_X.
\end{equation}

Now we have to investigate 
$\pi_* \mathcal{O}_X$.
Denote by $q \colon \t{Y} \to Y$ 
the torsor associated to 
$\mathcal{S}_Y$, i.e., we have 
$\t{Y} = \Spec_Y(\mathcal{S}_Y)$.
Moreover in~\ref{constr:abcov}
we constructed the rank $r$ 
vector bundles
$$
\alpha \colon Y(D_1, \ldots, D_r) \ \to \ Y,
\qquad\qquad
\beta \colon Y(E_1, \ldots, E_r) \ \to \ Y.
$$
Using the pullback divisors
$q^*(D_i)$ and $q^*(E_i)$,
we obtain the respective pullback bundles
$$
\t{\alpha} \colon
\t{Y}(q^*D_1, \ldots, q^*D_r) \ \to \ \t{Y},
\qquad\qquad
\t{\beta} \colon\t{Y}(q^*E_1, \ldots, q^*E_r) \ \to \ \t{Y}.
$$
Set for short $Y({\bf D}) := Y(D_1, \ldots, D_r)$ 
and  $\t{Y}(q^*{\bf D}) := \t{Y}(q^*D_1, \ldots, q^*D_r)$.
Similarly, define $Y({\bf E})$ and $\t{Y}(q^*{\bf E})$.
Then we have a commutative diagram
$$ 
\xymatrix{
{\t{X}}
\ar[rr]^{\t{\imath}}
\ar[d]_{p}
& &
{\t{Y}}(q^*{\bf D})
\ar[rr]^{\t{\kappa}}
\ar@/^2pc/@<1ex>[rrrr]^{\t{\alpha}}
\ar[d]^{q_{\bf D}}
& &
{\t{Y}}(q^*{\bf E})
\ar[rr]_{\qquad \t{\beta}}
\ar@{<-}@<1ex>[rr]^{\qquad \t{\sigma}}
\ar[d]_{q_{\bf E}}
& &
{\t{Y}}
\ar[d]^{q}
\\
X
\ar[rr]_{\imath}
\ar@/_3pc/[rrrrrr]_{\pi}
& &
Y({\bf D})
\ar[rr]_{\kappa}
& &
Y({\bf E})
\ar[rr]_{\qquad \beta}
\ar@{<-}@<1ex>[rr]^{\qquad \sigma}
& &
Y
}
$$
where $p$, $q_{\bf D}$ and $q_{\bf E}$ are 
the canonical morphisms,
we set $\t{X} := q_{\bf D}^{-1}(X)$
and $\t{\sigma} := q^* \sigma$ 
is the pull back section.

Recall that $q \colon \t{Y} \to Y$ is 
the quotient for the free action of
the torus $H := \Spec(\KK[K])$ 
defined by the grading of 
$q_* \mathcal{O}_{\t{Y}} = \mathcal{S}_Y$.
Thus, $\t{Y}({\bf D})$ and $\t{X}$ 
inherit free $H$-actions having 
$q_{\bf D}$ and $p$ as their respective 
quotients.
Moreover, let 
$\t{\mathcal{I}}$
denote the ideal sheaf of $\t{X}$
in $\t{Y}(q^*{\bf D})$. 
Then $\t{\mathcal{I}}$ is homogeneous, and we have
\begin{equation}
\label{eqn:cyclic2}
\pi_* \mathcal{O}_X
\ \cong \
\left(
\pi_* p_*  \mathcal{O}_{\t{X}} 
\right)_0
\ \cong \
\left(
\pi_* p_* \t{\imath}^*
\left(\mathcal{O}_{\t{Y}(q^*{\bf D})} / \t{\mathcal{I}}\right)
\right)_0
\ = \
\left(
q_* \t{\alpha}_* 
\left(\mathcal{O}_{\t{Y}(q^*{\bf D})} / \t{\mathcal{I}}\right)
\right)_0.
\end{equation}

To proceed, we need a suitable 
trivialization of the bundle 
$\t{\alpha} \colon {\t{Y}}(q^*{\bf D}) \to \t{Y}$.
For this, consider an open affine subset 
$V \subseteq Y$ such that on $V$ 
we have 
$D_i = \div(h_{i,V})$ for $1 \le i \le r$.
This gives us sections
$$ 
\eta_{i,V} \ := \ h_{i,V}^{-1} 
\ \in \ 
\Gamma(V,\mathcal{O}_Y(D_i))
\ \subseteq \
\Gamma(q^{-1}(V),\mathcal{O}_{\t{Y}})_{D_i},
$$
$$
q_{\bf D}^* (h_{i,V})
\ \in \
q_{\bf D}^* 
\left(\Gamma(\alpha^{-1}(V),\mathcal{O}_{Y({\bf D})})\right)
\ \subseteq \
\Gamma(\t{\alpha}^{-1}(q^{-1}(V)),\mathcal{O}_{\t{Y}(q^*{\bf D})})_0 
$$
Given another open affine subset 
$W \subseteq Y$ such that on $W$ 
we have 
$D_i = \div(h_{i,W})$ for $1 \le i \le r$,
we obtain over $V \cap W$ for the corresponding
sections:
$$ 
\frac{\t{\alpha}^*(\eta_{i,W})}{\t{\alpha}^*(\eta_{i,V})}
\ = \ 
\t{\alpha}^* q^* 
\left( 
\frac{\eta_{i,W}}{\eta_{i,V}}
\right)
\ = \ 
q_{\bf D}^* \alpha^*
\left( 
\frac{h_{i,V}}{h_{i,W}}
\right)
\ = \ 
\frac{q_{\bf D}^* (h_{i,V})}{q_{\bf D}^* (h_{i,W})}.
$$
Covering $Y$ with  $V$'s
as above, we obtain that the functions 
$\t{\alpha}^*(\eta_{i,V}) \cdot q_{\bf D}^* (h_{i,V})$ 
living on $\t{\alpha}^{-1}(q^{-1}(V))$ glue 
together to a global regular function 
$f_i$ of degree $D_i$ on $\t{Y}(q^*D_i)$
generating $\mathcal{O}_{\t{Y}}(q^*D_i)$ 
over $\mathcal{O}_{\t{Y}}$. 
Thus, the $f_i$ define a trivialization
$$ 
\xymatrix{
&&
{\t{Y} \times \KK^r}
\ar[rr]^{(\t{y},z) \mapsto (\t{y},z^n)}
& &
{\t{Y} \times \KK^r}
\ar@<1ex>@{<-}@/^1pc/[drr]^{\qquad \id \times g}
& &
\\
{\t{X}}
\ar[rr]^{\t{\imath}}
& &
{\t{Y}}(q^*{\bf D})
\ar[rr]_{\t{\kappa}}
\ar[u]_{\cong}^{\t{\alpha} \times f}
& &
{\t{Y}}(q^*{\bf E})
\ar[rr]_{\qquad \t{\beta}}
\ar@{<-}@<1ex>[rr]^{\qquad \t{\sigma}}
\ar[u]_{\cong}^{\t{\beta} \times f^n}
& &
{\t{Y}}
\ar@{<-}@/_1pc/[ull]^{\pr_{\t{Y}}}
}
$$
where we write
$z$ for $(z_1,\ldots, z_r)$ and
$z^n$ for $(z_1^{n_1}, \ldots, z_r^{n_r})$
etc..
Since $\t{\sigma} = q^*\sigma$ is $H$-equivariant, 
each component $g_i$ of $g$ is homogeneous 
with $\deg(g_i) = E_i$.
Note that the divisors $\div(g_i)$  
describe the branch divisor as claimed.

Denote by 
$\t{\mathcal{J}} \subseteq \mathcal{O}_{\t{Y}}[T_1,\ldots,T_r]$ 
the ideal 
sheaf of the image of $\t{X}$ in $\t{Y} \times \KK^r$.
Then, using
$\t{X}  = \t{\kappa}^{-1}(\t{\sigma}(\t{Y}))$,
we obtain
$$
\t{\mathcal{I}} 
\ = \
\bangle{
f_1^{n_1}-\t{\alpha}^*g_1,\, \ldots, \, f_r^{n_r}-\t{\alpha}^*g_r
},
\qquad\qquad
\t{\mathcal{J}} 
\ = \
\bangle{
T_1^{n_1}- g_1,\, \ldots, \, T_r^{n_r}-g_r
}.
$$
Thus, using the isomorphism 
$q_*\t{\alpha}_* \mathcal{O}_{\t{Y}(q^*{\bf D})}
\cong 
\mathcal{S}_Y[T_1,\ldots,T_r]$
established by the above commutative diagram,
we may continue~(\ref{eqn:cyclic2}) as 
\begin{equation}
\label{eqn:cyclic3}
\pi_* \mathcal{O}_X
\ \cong \
\left(
\mathcal{S}_Y[T_1, \ldots, T_r] / \t{\mathcal{J}}
\right)_0
\ \cong \
\mathcal{S}_Y[T_1, \ldots, T_r]_0 / \t{\mathcal{J}}_0
\end{equation}
The homogeneous ideal sheaf $\t{\mathcal{I}}$ is 
locally, over $Y$, generated in degree zero in the sense
that we have 
$\t{\mathcal{I}} = 
\t{\alpha}^* \mathcal{O}_{\t{Y}} \cdot \t{\mathcal{I}}_0$.
The same holds for $\t{\mathcal{J}}$, and we obtain
$$ 
\pi_* \mathcal{S}_X 
\ \cong \
\mathcal{S}_Y 
\otimes_{\mathcal{O}_Y} 
\pi_* \mathcal{O}_X
\ \cong \ 
\mathcal{S}_Y 
\otimes_{\mathcal{O}_Y}
\mathcal{S}_Y[T_1, \ldots, T_r]_0 / \t{\mathcal{J}}_0
\ \cong \
\mathcal{S}_Y[T_1, \ldots, T_r] / \t{\mathcal{J}}.
$$
\end{proof}

\begin{proposition}
\label{prop:vafin2crfin}
Consider a normal variety $X$,
a finitely generated subgroup 
$K \subseteq \WDiv(X)$ mapping 
onto $\Cl(X)$, a subgroup $L \subseteq K$ 
and the algebras
$$ 
R \ := \ \bigoplus_{D \in K} \Gamma(X,\mathcal{O}_X(D)),
\qquad\qquad
A \ := \ \bigoplus_{D \in L} \Gamma(X,\mathcal{O}_X(D)).
$$
If the subgroup $L \subseteq K$ is of finite index
and the algebra $A$ is 
finitely generated, then also the algebra 
$R$ is finitely generated.
\end{proposition}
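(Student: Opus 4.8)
The plan is to show that $R$ is a finite module over $A$; since $A$ is a finitely generated $\KK$-algebra, this immediately yields finite generation of $R$. First I would record the structural features of the two rings. As $K \subseteq \WDiv(X)$ is free abelian and $X$ is a variety, $R$ embeds into $\KK(X)[K]$ by sending a homogeneous section $h \in \Gamma(X,\mathcal{O}_X(D))$ to $h\,t^D$; in particular $R$ is an integral domain, graded by $K$, and $A = \bigoplus_{D \in L} R_D$ is exactly its $L$-graded part, a graded subdomain. Thus the whole task is to control $R$ as an $A$-module.

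The key numerical input is that $m := [K:L]$ annihilates the finite group $K/L$, so that $mD \in L$ for every $D \in K$. Consequently, any homogeneous $f \in R_D$ satisfies $f^m \in R_{mD} \subseteq A$, hence is a root of the monic polynomial $T^m - f^m \in A[T]$. Since the elements of $R$ integral over $A$ form a subring and $R$ is additively generated by its homogeneous parts, this shows that $R$ is integral over $A$.

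Next I would check that $\operatorname{Frac}(R)$ is a \emph{finite} extension of $\operatorname{Frac}(A)$. The point is that any $a/b$ with $a,b \in R$ homogeneous and $\deg(a)-\deg(b) \in L$ already lies in $\operatorname{Frac}(A)$: rewriting it as $a\,b^{m-1}/b^m$, both numerator and denominator now have degree in $L$ and hence lie in $A$. Choosing one nonzero homogeneous element $f_{\bar c} \in R$ for each coset $\bar c \in K/L$ whose graded pieces are not all zero, every homogeneous $g \in R$ becomes a $\operatorname{Frac}(A)$-multiple of the corresponding $f_{\bar c}$, since $g/f_{\bar c}$ is homogeneous of degree in $L$. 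Therefore $\operatorname{Frac}(R)$ is generated over $\operatorname{Frac}(A)$ by the finitely many algebraic elements $f_{\bar c}$, and the extension is finite.

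Finally, $R$ is contained in the integral closure $\bar{A}$ of $A$ inside $\operatorname{Frac}(R)$. As $A$ is a finitely generated domain over the field $\KK$ and $\operatorname{Frac}(R)/\operatorname{Frac}(A)$ is finite, Noether's finiteness theorem for the normalization guarantees that $\bar{A}$ is a finite $A$-module; since $A$ is Noetherian, its submodule $R$ is then a finite $A$-module as well, and so a finitely generated $\KK$-algebra. I expect the main obstacle to be the middle step: one must rule out that the infinitely many graded components force an infinite field extension, and it is precisely here, together with the appeal to finiteness of integral closure, that the finite generation of $A$ (rather than mere Noetherianity) enters in an essential way.
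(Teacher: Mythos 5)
Your proof is correct, but it follows a genuinely different and more elementary route than the paper's. The paper argues geometrically: it replaces $X$ by its smooth locus, forms the relative spectra $Z = \Spec_X(\mathcal{R})$ and $\widetilde{X} = \Spec_X(\mathcal{A})$, uses the hypothesis that $K$ surjects onto $\Cl(X)$ to find homogeneous $f_i \in A$ whose zero sets are the complements of an affine cover of $X$, shows that $\widetilde{X} \to Y = \Spec(A)$ is an open embedding with complement of codimension at least two, and then applies Lemma~\ref{lem:vafin2crfin}: there one builds a finitely generated graded subalgebra $S \subseteq R$ with $S_{f_i} = R_{f_i}$, realizes $Z$ as an open subvariety with small complement in $Z' = \Spec(S)$ via the finite quotient $Z' \to Y$ by $G = \Spec(\KK[K/L])$, and reads off $R = \Gamma(Z,\mathcal{O}) = \Gamma(Z'',\mathcal{O})$ from the normalization $Z''$ of $Z'$. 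You instead stay entirely at the level of graded rings: integrality of $R$ over $A$ (from $f^m \in A$ with $m = [K:L]$), finiteness of $\operatorname{Frac}(R)$ over $\operatorname{Frac}(A)$ via coset representatives, and Noether's finiteness of the integral closure $\overline{A}$ combined with Noetherianity of $A$ to trap $R$ inside a finite $A$-module. Both proofs ultimately rest on module-finiteness of normalization, but yours isolates the purely algebraic content: it never uses the hypothesis that $K$ maps onto $\Cl(X)$, and it uses normality of $X$ only through the definition of the section spaces, so it really proves a statement about Veronese subalgebras of $K$-graded domains with respect to finite-index subgroups $L \subseteq K$; the paper's hypotheses are, by contrast, genuinely consumed by its argument (surjectivity to produce the covering sections $f_i$, normality to control $Z$), and its Lemma~\ref{lem:vafin2crfin} is cast in a sheaf-theoretic form consistent with the Cox-ring machinery used elsewhere in the paper. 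Two points you should make explicit in a write-up: $R$ is a domain because the finitely generated group $K \subseteq \WDiv(X)$ is free, so $R$ embeds into a Laurent polynomial ring over $\KK(X)$; and Noether's theorem is applied to the affine domain $A$ inside the finite field extension $\operatorname{Frac}(R) \supseteq \operatorname{Frac}(A)$, which is unproblematic in the paper's characteristic-zero setting. Your closing remark is also accurate: finite generation of $A$, not mere Noetherianity, is essential here, since integral closures of general Noetherian domains need not be module-finite.
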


\begin{lemma}
\label{lem:vafin2crfin}
Let $X$ be a normal variety,
$K$ be a finitely generated abelian group,
$\mathcal{R}$ be a quasicoherent 
sheaf of normal 
$K$-graded $\mathcal{O}_X$-algebras
and $Z$ be its relative spectrum,
$$
\mathcal{R} 
\ = \ 
\bigoplus_{w \in K} \mathcal{R}_w,
\qquad
\qquad
Z 
\ := \ 
\Spec_{X}(\mathcal{R}),
$$
where we assume $\mathcal{R}$ to be 
locally of finite type.
Let $L \subseteq K$ be a subgroup of finite 
index and consider the associated Veronese 
subalgebra of the algebra of global sections
$$ 
A 
\ := \ 
\bigoplus_{w \in L} \Gamma(X,\mathcal{R}_w)
\ \subseteq \ 
\bigoplus_{w \in K} \Gamma(X,\mathcal{R}_w)
\ =: \ 
R.
$$
Suppose that there are homogeneous sections 
$f_1, \ldots, f_r \in A$ 
such that each $R_{f_i}$ is finitely
generated, 
each
$Z_{f_i} = Z \setminus V(Z,f_i)$ is an affine
variety and we have
\begin{eqnarray*}
Z 
& = & 
Z_{f_1} \cup \ldots \cup Z_{f_r}.
\end{eqnarray*}
If $A$ is finitely generated and 
for $Y := \Spec(A)$ 
the complement  
$Y \setminus (Y_{f_1} \cup \ldots \cup Y_{f_r})$
is of codimension at least two in $Y$,
then $R$ is finitely generated.
\end{lemma}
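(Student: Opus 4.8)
The plan is to read $R$ as the ring of global functions on the relative spectrum and to study the affinization morphism to $Y$. Writing $Z = \Spec_X(\mathcal{R})$, the projection formula gives the canonical identifications $\Gamma(Z,\mathcal{O}_Z) = \Gamma(X,\mathcal{R}) = R$ and $\Gamma(Z_{f_i},\mathcal{O}_Z) = R_{f_i}$. The inclusion $A \subseteq R$ induces a morphism $\psi \colon Z \to Y = \Spec(A)$, and since each $f_i \in A$ pulls back to $f_i \in R$, one has $\psi^{-1}(Y_{f_i}) = Z_{f_i}$. Because $Z = Z_{f_1}\cup\dots\cup Z_{f_r}$, this yields $\psi^{-1}(U) = Z$ for the open set $U := Y_{f_1}\cup\dots\cup Y_{f_r}$, and consequently $\Gamma(U, \psi_*\mathcal{O}_Z) = \Gamma(Z,\mathcal{O}_Z) = R$.

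The second step is a finiteness statement over $U$. Since $L \subseteq K$ has finite index $m$, every homogeneous $x \in R_w$ satisfies $x^m \in R_{mw}$ with $mw \in L$, hence $x^m \in A$; thus each homogeneous element, and therefore all of $R$, is integral over $A$. Localizing, $R_{f_i}$ is integral over $A_{f_i}$, and being a finitely generated $\KK$-algebra it is a \emph{finite} $A_{f_i}$-module. Because each $Z_{f_i}$ is affine with coordinate ring $R_{f_i}$, the morphism $\psi$ is finite over $U$, so $\mathcal{F} := (\psi_*\mathcal{O}_Z)|_U$ is a coherent sheaf of $\mathcal{O}_U$-algebras with $\Gamma(U,\mathcal{F}) = R$.

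The third step promotes coherence over $U$ to finiteness over all of $Y$. Here I would use that $\mathcal{R}$ is a sheaf of normal algebras: its $L$-Veronese subsheaf $\bigoplus_{w \in L}\mathcal{R}_w$ is again normal, so $A$ is normal and $Y$ satisfies Serre's condition $S_2$; likewise $Z$ is normal, so the finite pushforward $\mathcal{F}$ is $S_2$, hence reflexive, on the normal open $U$. The plan is now to pick a finite $A$-submodule $M \subseteq R$ with $M_{f_i} = R_{f_i}$ for all $i$ (clearing denominators of module generators), pass to the reflexive hull $\overline{M}$, a coherent sheaf on $Y$ with $\overline{M}|_U \cong \mathcal{F}$. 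Since $Y$ is normal and the complement $Y \setminus U$ has codimension at least two, reflexivity forces
\begin{eqnarray*}
\Gamma(Y,\overline{M}) & = & \Gamma(U,\overline{M}|_U) \ = \ \Gamma(U,\mathcal{F}) \ = \ R.
\end{eqnarray*}
As $\overline{M}$ is coherent on the affine $Y$, its global sections $R$ form a finite $A$-module; together with finite generation of $A$ this shows that $R$ is a finitely generated $\KK$-algebra.

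The step I expect to be the main obstacle is the third one: extracting finiteness of $\Gamma(U,\mathcal{F})$ from coherence on $U$ alone. This is precisely where both the codimension-two hypothesis on $Y\setminus U$ and the $S_2$ (normality) property of $Y$ are indispensable — without a depth bound along the removed locus the sections over $U$ need not form a finite module, as the obstruction lives in the local cohomology $H^1_{Y\setminus U}$ of a coherent extension. The one delicate supporting point is the claim that the $L$-Veronese subsheaf of a normal graded sheaf is again normal, which is what secures the $S_2$ property of $Y$ on which the extension argument rests; I would verify it by a direct depth computation or by realizing the Veronese as invariants under the finite group $K/L$.
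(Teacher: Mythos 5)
Your proof is correct, but it takes a genuinely different route from the paper's. You stay on the base: after observing that $R$ is integral over $A$ (via the finite index of $L$ in $K$), you make $\psi\colon Z\to U$ a finite morphism, push forward to get a coherent sheaf $\mathcal{F}$ on $U$ with $\Gamma(U,\mathcal{F})=R$, and extend across the codimension-two complement using a reflexive hull on the normal affine $Y$. This forces you to supply several auxiliary facts: that $A$ is normal --- which indeed follows by viewing $A$ as the invariants of $R$ under the finite group $\Spec(\KK[K/L])$, whereas the \emph{direct depth computation} you mention as an alternative would not suffice, since normality also involves the condition $R_1$ --- that depth is preserved under finite pushforward (plus going-down to compare dimensions), and the torsion-free-plus-$S_2$ criterion for reflexivity. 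The paper instead works upstairs: it chooses a finitely generated graded subalgebra $S\subseteq R$ with $A\subseteq S$ and $S_{f_i}=R_{f_i}$, so that $Z$ becomes an open subvariety of $Z':=\Spec(S)$ whose complement has codimension at least two (the codimension bound is transported from $Y\setminus(Y_{f_1}\cup\ldots\cup Y_{f_r})$ via the finite quotient map $Z'\to Y$ for the $\Spec(\KK[K/L])$-action), and then concludes $R=\Gamma(Z,\mathcal{O})=\Gamma(Z'',\mathcal{O})$ for the normalization $Z''\to Z'$, which is finitely generated by Noether's theorem on finiteness of the integral closure. The paper's argument is more elementary --- it needs neither normality of $A$ nor any depth or reflexivity machinery, only algebraic Hartogs on the normal variety $Z''$ --- while your argument has the merit of staying on $Y$ throughout and exhibiting $R$ explicitly as a finite $A$-module.
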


\begin{proof}
First note that $Z$ is a variety with 
$\Gamma(Z, \mathcal{O}) = R$ and that
$\Gamma(Z_{f_i}, \mathcal{O}) = R_{f_i}$ 
holds.
Since each $R_{f_i}$ is finitely 
generated, we may construct a finitely generated 
$K$-graded subalgebra $S \subseteq R$ with
$$ 
A \ \subseteq \ S,
\qquad \qquad
S_{f_i} \ = \ R_{f_i}
\quad \text{ for } 
1 \le i \le r.
$$

Set $Z' := \Spec(S)$.
Then the inclusion $S \subseteq R$ 
defines a canonical 
morphism $\imath \colon Z \to Z'$.
Moreover, $S$ is canonically graded by 
the factor group $K/L$ 
and hence $Z'$ comes with an action 
of the finite abelian group $G := \Spec(\KK[K/L])$.
The inclusion $A \subseteq S$ defines the 
quotient map $\pi \colon Z' \to Y$ 
for the action of $G$.
By construction, we have 
$$ 
Z_{f_i}
\ = \ 
\imath^{-1}(Z'_{f_i})
\ \cong \ 
Z'_{f_i},
\qquad\qquad
Z'_{f_i}
\ = \ 
\pi^{-1}(Y_{f_i}).
$$
Consequently, $\imath \colon Z \to Z'$ is an 
open embedding, and we may regard $Z$ 
as a subset of $Z'$.
By our assumptions, 
setting $V := Y_{f_1} \cup \ldots \cup Y_{f_r}$, 
we obtain $Z = \pi^{-1}(V)$.
Since $\pi \colon Z' \to Y$ is a finite map,
we can conclude
$$ 
\dim(Z' \setminus Z)
\ = \ 
\dim(Y \setminus V) 
\ \le \ 
\dim(Y) - 2
\ = \ 
\dim(Z') - 2.
$$
Let $Z'' \to Z'$ be the normalization.
Since $Z$ is normal, we have 
$Z \subseteq Z''$.
We conclude that 
$R = \Gamma(Z,\mathcal{O}) = \Gamma(Z'',\mathcal{O})$
holds, and thus $R$ is finitely 
generated.
\end{proof}

\begin{proof}[Proof of Proposition~\ref{prop:vafin2crfin}]
First note that the rings $R$ and $A$ do not change if 
we replace $X$ with the set of its smooth points.
Thus, we may assume that $X$ is smooth.
Then we obtain graded sheaves of normal
$\mathcal{O}_X$-algebras
$$
\mathcal{R}  
\ = \ 
\bigoplus_{w \in K} \mathcal{R}_w,
\qquad \qquad
\mathcal{A}  = \bigoplus_{w \in L} \mathcal{R}_w,
$$
which are locally of finite type.
Our task is to verify the assumptions 
of Lemma~\ref{lem:vafin2crfin} 
for $\mathcal{R}$ and $A = \Gamma(X,\mathcal{A})$.
Setting $Z  := \Spec_X(\mathcal{R})$ 
and
$\t{X} :=  \Spec_X(\mathcal{A})$,
we obtain normal varieties, 
and we have a commutative diagram 
of affine morphisms
$$ 
\xymatrix{
Z
\ar[rr]^{\kappa}
\ar[dr]_{p}
&&
{\t{X}}
\ar[dl]^{q}
\\
&
X 
&
}
$$
where $\kappa$ is the quotient for the action of 
the finite abelian group $G := \Spec(\KK[K/L])$
on $Z$ defined by the canonical $(K/L)$-grading 
of $\mathcal{R}$.
Moreover, we have an affine variety $Y := \Spec(A)$, 
and there is a canonical morphism 
$\imath \colon \t{X} \to Y$.

To obtain the desired sections $f_i \in A$, 
cover $X$ by affine open subsets 
$U_1, \ldots, U_r$.
Then each $X \setminus U_i$ is the set of zeroes 
of a suitable homogeneous section $f_i \in R_i$. 
Replacing $f_i$ with a suitable power, we achieve
$f_i \in A_i$. 
Thus, we can cover $\t{X}$ by the 
affine open subsets
$$
q^{-1}(U_i) 
\ = \ 
\t{X}_{f_i}
\ = \ 
\imath^{-1}(Y_{f_i});
$$ 
use e.g.~\cite[Lemma~2.3]{Ha2}. 
Now, 
$\Gamma(\t{X},\mathcal{O}) 
= A = 
\Gamma(Y, \mathcal{O})$ 
implies
$\t{X}_{f_i} \cong Y_{f_i}$.
Thus, each restriction 
$\imath \colon \t{X}_{f_i} \to Y_{f_i}$
is an isomorphism 
and $\imath \colon \t{X} \to Y$
is an open embedding.
Moreover, using again
$\Gamma(\t{X}, \mathcal{O}) 
=
\Gamma(Y, \mathcal{O})$,
we see that we have a small complement 
$$
Y \setminus \t{X}
\ = \
Y \setminus (\t{X}_{f_1} \cup \ldots \cup \t{X}_{f_r})
\ = \ 
Y \setminus (Y_{f_1} \cup \ldots \cup Y_{f_r}).
$$
\end{proof}

\begin{proposition}
\label{prop:abcovfingen}
Let $\pi \colon X \to Y$ be 
an abelian covering of 
normal varieties with finitely 
generated free divisor class groups
such that $\pi^*(\Cl(Y))$ 
is of finite index in $\Cl(X)$.
Then the following statements 
are equivalent.
\begin{enumerate}
\item
The Cox ring $\mathcal{R}(X)$ is 
a finitely generated $\KK$-algebra.
\item
The Cox ring $\mathcal{R}(Y)$ is 
a finitely generated $\KK$-algebra.
\end{enumerate}
\end{proposition}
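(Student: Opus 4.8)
The plan is to reduce both implications to statements about a single intermediate algebra, namely the Veronese subalgebra of $\mathcal{R}(X)$ supported on the finite-index subgroup $L := \pi^*(\Cl(Y)) \subseteq \Cl(X)$, and then to move between this algebra and $\mathcal{R}(Y)$ using the explicit description of an abelian covering given in Proposition~\ref{prop:abcov}.

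First I would fix a subgroup $K \subseteq \WDiv(Y)$ mapping isomorphically onto $\Cl(Y)$ and containing the divisors $D_1, \ldots, D_r$ of Construction~\ref{constr:abcov}; this is possible because $\Cl(Y)$ is free. With the sheaves $\mathcal{S}_Y$ and $\mathcal{S}_X$ of Proposition~\ref{prop:abcov} one has $\mathcal{R}(Y) = \Gamma(Y, \mathcal{S}_Y)$, and I set
$$A \ := \ \Gamma(X, \mathcal{S}_X) \ = \ \Gamma(Y, \pi_* \mathcal{S}_X).$$
Since $\pi$ is finite, $\pi_* \pi^*$ is multiplication by $\deg \pi$, so $\pi^*$ is injective on the torsion-free group $\Cl(Y)$ and its image is the full-rank subgroup $L$; consequently $A$ is exactly the Veronese subalgebra $\bigoplus_{w \in L} \mathcal{R}(X)_w$ of $\mathcal{R}(X)$. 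On the other hand, Proposition~\ref{prop:abcov} gives a $K$-graded isomorphism $\pi_* \mathcal{S}_X \cong \mathcal{S}_Y[T_1, \ldots, T_r]/\bangle{T_1^{n_1}-g_1, \ldots, T_r^{n_r}-g_r}$, and the right hand side is free as an $\mathcal{S}_Y$-module with basis the monomials $T^j = T_1^{j_1} \cdots T_r^{j_r}$, $0 \le j_i < n_i$. Passing to global sections, which commutes with this finite free decomposition, shows that $A$ is a free $\mathcal{R}(Y)$-module of rank $n_1 \cdots n_r$ containing $\mathcal{R}(Y)$ (the part of $T$-degree zero) as a subalgebra.

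For ``(ii)$\Rightarrow$(i)'', assume $\mathcal{R}(Y)$ is finitely generated. Then $A = \mathcal{R}(Y)[T_1, \ldots, T_r]/\bangle{T_i^{n_i}-g_i}$ is a quotient of a polynomial ring over a finitely generated algebra, hence finitely generated. Now $L$ is of finite index in a subgroup of $\WDiv(X)$ realizing $\Cl(X)$, so Proposition~\ref{prop:vafin2crfin}, applied with this $L$ and $A$, shows that $\mathcal{R}(X)$ is finitely generated. For ``(i)$\Rightarrow$(ii)'', assume $\mathcal{R}(X)$ is finitely generated. The Veronese subalgebra $A$ over the finite-index subgroup $L$ is then again finitely generated: if $f_1, \ldots, f_m$ generate $\mathcal{R}(X)$ and $N := [\Cl(X):L]$, then each $f_i^N$ has degree in $L$ and hence lies in $A$, so $\mathcal{R}(X)$ is integral and module-finite over $A$, and the Artin--Tate lemma gives finite generation of $A$. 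Finally, $A$ is module-finite over its subalgebra $\mathcal{R}(Y)$ by the previous paragraph, so a second application of Artin--Tate yields that $\mathcal{R}(Y)$ is finitely generated.

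I expect the main obstacle to be the sheaf-theoretic bookkeeping in the middle step: one has to be sure that taking global sections of the quotient sheaf $\mathcal{S}_Y[T_1,\ldots,T_r]/\bangle{T_i^{n_i}-g_i}$ really produces the free $\mathcal{R}(Y)$-module $\bigoplus_j \mathcal{R}(Y) T^j$, with no contribution from higher cohomology. This is precisely why I would exploit the $\mathcal{S}_Y$-module freeness of the quotient, which reduces the computation to the trivial fact that global sections commute with finite direct sums. A subsidiary technical point is arranging the subgroup $K$ to contain the $D_i$ while still mapping isomorphically onto $\Cl(Y)$; the freeness of $\Cl(Y)$ makes this routine but it should be stated explicitly. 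Everything else is standard commutative algebra (Artin--Tate and Veronese subalgebras over finite-index subgroups) together with the two results already established in this section.
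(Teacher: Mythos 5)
Your proposal is correct and follows essentially the same route as the paper's own proof: both arguments pass through the Veronese subalgebra of $\mathcal{R}(X)$ supported on $L = \pi^*(\Cl(Y))$, identify it via Proposition~\ref{prop:abcov} as a module-finite extension of $\mathcal{R}(Y)$, and then conclude with Proposition~\ref{prop:vafin2crfin} for (ii)$\Rightarrow$(i) and the Artin--Tate lemma for (i)$\Rightarrow$(ii). The differences are purely expository --- you spell out the free module basis of monomials $T^j$ and the $f_i^N$/Artin--Tate argument for finite generation of the Veronese subalgebra, both of which the paper merely asserts, while the technical point you flag about choosing $K$ to contain the $D_i$ and still map isomorphically onto $\Cl(Y)$ is passed over in silence by the paper's proof as well.
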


\begin{proof}
Let $M \subseteq \WDiv(Y)$ and 
$K \subseteq \WDiv(X)$ be subgroups 
mapping isomorphically to the respective 
divisor class groups $\Cl(Y)$ and $\Cl(X)$.
Then the Cox rings are given as  
$$ 
\mathcal{R}(Y) 
\ = \ 
\bigoplus_{E \in M} \Gamma(Y,\mathcal{O}_Y(E)),
\qquad\qquad
\mathcal{R}(X) 
\ = \ 
\bigoplus_{D \in K} \Gamma(X,\mathcal{O}_X(D)).
$$

Since $\Cl(Y)$ is free and 
$\pi \colon X \to Y$ is the quotient for a finite
group action, the pullback $\pi^* \colon \Cl(Y) \to \Cl(X)$ 
is injective, see~\cite[Ex.~1.7.6]{Fu}.
Consequently, there are a unique subgroup 
$L \subseteq K$ 
and an isomorphism $\pi^*(M) \to L$ inducing the 
identity on $\pi^*(\Cl(Y))$.
By our assumption, $L$ is of finite index in $K$.
Moreover, we have canonical identifications
$$ 
\mathcal{R}(Y)
\ \subseteq \
\bigoplus_{E \in M} \Gamma(X,\mathcal{O}_X(\pi^*(E)))
\ = \
S := \bigoplus_{D \in L} \Gamma(X,\mathcal{O}_X(D))
\ \subseteq \
\mathcal{R}(X).
$$

Suppose that $\mathcal{R}(X)$ is finitely 
generated over $\KK$. 
Then also the Veronese subalgebra 
$S \subseteq \mathcal{R}(X)$ is 
finitely generated over $\KK$.
Moreover, by Proposition~\ref{prop:abcov},
the algebra $S$ is a finite module over 
$\mathcal{R}(Y)$. Thus, the tower 
$\KK \subseteq \mathcal{R}(Y) \subseteq S$ 
fullfills the assumptions of the Artin-Tate
Lemma~\cite[Prop.~7.8]{AM}, 
and we obtain that $\mathcal{R}(Y)$
is a finitely generated $\KK$-algebra.

Now let $\mathcal{R}(Y)$ be finitely 
generated over~$\KK$. 
Then  Proposition~\ref{prop:abcov} tells 
us that $S$ is finitely generated over~$\KK$.
Thus Proposition~\ref{prop:vafin2crfin} 
shows that $\mathcal{R}(X)$ 
is finitely generated over~$\KK$.
\end{proof}

\section{Cox rings and blowing up}
\label{sec:coxblow}

In this section, we compute the Cox ring 
of the fourth Hirzebruch 
surface blown up at three general points.
As in the preceding section, we work 
over an algebraically closed 
field $\KK$ of characteristic zero.
We use the technique of toric ambient 
modifications provided in~\cite{Ha2},
and begin with giving short outline of this
technique.
A basic ingredient is the following 
construction of the Cox ring and the 
universal torsor of 
a toric variety given in~\cite{Cox}.

\begin{construction}
\label{constr:cox}
Let $Z$ be the toric variety arising 
from a complete fan $\Sigma$ in a 
lattice~$N$,
and suppose that the primitive generators
$v_1, \ldots, v_r$ of $\Sigma$ 
span the lattice~$N$.
Then we have mutually dual
exact sequences
$$ 
\xymatrix{
0
\ar[rr]
&&
L 
\ar[rr]
&&
{\ZZ^r}
\ar[rr]^{P \colon e_i \mapsto v_i}
&&
N
\ar[rr]
&&
0,
\\
0
&&
K
\ar[ll]
&&
{\ZZ^r}
\ar[ll]^{Q}
&&
M
\ar[ll]
&&
0,
\ar[ll]
}
$$
where the lattice $K$ is isomorphic to the 
divisor class group $\Cl(Z)$.
The Cox ring of $Z$ is the polynomial ring 
$\mathcal{R}(Z) =\KK[T_1,\ldots,T_r]$ 
with the $K$-grading
defined by $\deg(T_i) := Q(e_i)$.
Moreover,  denoting by $\delta \subseteq \QQ^r$ 
the positive orthant, we obtain a fan in $\ZZ^r$ 
consisting of certain faces of $\delta$, namely
\begin{eqnarray*}
\rq{\Sigma}
& := & 
\left\{
\rq{\sigma} \preceq \delta; \; P(\rq{\sigma}) \subseteq \sigma 
\text{ for some } \sigma \in \Sigma 
\right\}.
\end{eqnarray*}
The associated toric variety $\rq{Z}$  
is an open toric 
subvariety of $\b{Z} := \KK^r$.
The toric morphism $p \colon \rq{Z} \to Z$ 
defined by $P \colon \ZZ^r \to N$ is 
a universal torsor; it is a quotient for 
the action of the torus $\Spec(\KK[K])$
on $\b{Z}$ defined by the $K$-grading
of $\mathcal{R}(Z)$.
\end{construction}

Given a variety $X_0$,
the rough idea of~\cite{Ha2}
is to work with a suitable 
embedding $X_0 \subseteq Z_0$ into 
a toric variety,
consider the proper transform
$X_1 \subseteq Z_1$ under suitable
toric modifications $Z_1 \to Z_0$
and then compute
the Cox ring $\mathcal{R}(X_1)$ 
in terms of $\mathcal{R}(X_0)$ 
using the toric universal
torsors over $Z_1$ and $Z_0$.
In our outline, we restrict 
to the case of blowing up a
smooth projective surface $X_0$ 
with divisor class group $K_0 \cong \ZZ^{k_0}$ 
and a Cox ring, which admits a 
representation 
$$ 
\mathcal{R}(X_0) 
\ = \ 
\KK[T_1, \ldots, T_r] / \bangle{f_0},
\qquad
\deg(T_i) \ = \ w_i \ \in \ K_0,
$$
where $f_0 \in \KK[T_1, \ldots, T_r]$ is a 
homogeneous polynomial and the $T_i$ define
pairwise nonassociated prime elements
in $\mathcal{R}(X_0)$.
We use this presentation to embed $X_0$ 
into a toric variety. 
First note that we have mutually dual
sequences
$$ 
\xymatrix{
0
\ar[rr]
&&
M 
\ar[rr]
&&
{\ZZ^r}
\ar[rr]^{Q_0 \colon e_i \mapsto w_i}
&&
K_0
\ar[rr]
&&
0,
\\
0
&&
N 
\ar[ll]
&&
{\ZZ^r}
\ar[ll]^{P_0}
&&
L_0
\ar[ll]
&&
0.
\ar[ll]
}
$$
Consider any complete simplicial 
fan $\Sigma_0$ in $N$
having the images $v_i := P_0(e_i) \in N$ of 
the canonical base vectors $e_i \in \ZZ^r$ 
as the generators of its rays.
Let $\rq{\Sigma}_0$ be the fan 
consisting of faces of the positive orthant 
$\delta \subseteq \QQ^r$ provided by 
Construction~\ref{constr:cox}.
Then the toric variety $\rq{Z}_0$ is an open toric 
subvariety of $\b{Z}_0 := \KK^r$, and the 
toric morphism $p_0 \colon \rq{Z}_0 \to Z_0$ 
defined by $P_0 \colon \ZZ^r \to N$ is 
a universal torsor. 
Moreover, setting 
$$
\b{X}_0 \  := \ V(\b{Z}_0,f_0),
\qquad\qquad
\rq{X}_0 \ := \ \b{X}_0 \cap \rq{Z}_0,
$$
we obtain $X_0 \cong p_0(\rq{X}_0)$, which allows us
to view $X_0$ as a closed subvariety of the toric
variety $Z_0$, see~\cite[Prop.~3.14]{Ha2}.
Note that our freedom in choosing the fan 
$\Sigma_0$ essentially relies on the assumption
that $X_0$ is a surface; in general one 
has to proceed more carefully, as 
$X_0$ and $p_0(\rq{X}_0)$ may differ by 
a small birational transformation.
 
Now we perform a toric modification.
Suppose that for some $d \ge 2$
the cone $\sigma_0$ spanned
by $v_1, \ldots, v_d$
belongs to $\Sigma_0$ and that
we have a primitive lattice vector 
$$
v_\infty 
\ := \
v_1 + \ldots + v_d
\ \in \
N.
$$ 
Recall that $\sigma_0$ corresponds to a toric orbit 
$T_0 \mal z_0 \subseteq Z_0$.
Moreover the stellar subdivision
$\Sigma_1$ of $\Sigma_0$ at $v_\infty$
defines a modification
$Z_1 \to Z_0$ of toric varieties having 
the closure of the toric orbit 
$T_0 \mal z_0 \subseteq Z_0$ as its center.
Then we have commutative diagrams
$$ 
\xymatrix{
{\b{Z}_1}
\ar[r]^{\b{\pi}}
&
{\b{Z}_0}
\\
{\rq{Z}_1}
\ar[r]
\ar[u]
\ar[d]_{p_1}
&
{\rq{Z}_0}
\ar[u]
\ar[d]^{p_0}
\\
Z_1
\ar[r]_{\pi}
&
Z_0
}
\qquad
\qquad
\qquad
\xymatrix{
{\b{X}_1}
\ar[r]
&
{\b{X}_0}
\\
{\rq{X}_1}
\ar[r]
\ar[u]
\ar[d]
&
{\rq{X}_0}
\ar[u]
\ar[d]
\\
X_1
\ar[r]
&
X_0
}
$$
where $p_1 \colon \rq{Z}_1 \to Z_1$ denotes 
the toric universal torsor,
$X_1 \subseteq Z_1$ the proper transform of 
$X_0 \subseteq Z_0$
and we write
$\rq{X}_1 = p_1^{-1}(X_1)$ for the inverse image and 
$\b{X}_1$ for the closure of
$\rq{X}_1$ in $\b{Z}_1 = \KK^{r+1}$.
Note that we have
\begin{eqnarray*} 
\b{\pi}(z_1, \ldots, z_{\infty})
& = & 
(z_1z_\infty, \ldots, z_dz_\infty, z_{d+1}, \ldots, z_r)
\end{eqnarray*}
for the lifting $\b{\pi} \colon \b{Z}_1 \to \b{Z}_0$ 
of the toric modification $\pi \colon Z_1 \to Z_0$ 
to the total coordinate spaces,
see~\cite[Lemma.~5.3]{Ha2}.
Moreover, $p_1 \colon \rq{Z}_1 \to Z_1$ 
defines another pair of dual sequences
$$ 
\xymatrix{
0
\ar[rr]
&&
L_1
\ar[rr]
&&
{\ZZ^{r+1}}
\ar[rr]^{P_1 \colon e_i \mapsto v_i}
&&
N
\ar[rr]
&&
0
\\
0
&&
K_1 
\ar[ll]
&&
{\ZZ^{r+1}}
\ar[ll]^{Q_1}
&&
M
\ar[ll]
&&
0
\ar[ll]
}
$$

Now, the basic observation is that under some 
mild assumptions, $\b{X}_1$ is the total 
coordinate space of $X_1$ and the 
explicit description of $\b{\pi}$ given above,
enables us to compute moreover the Cox ring.
For the precise statement,
consider the $\ZZ_{\ge 0}$-grading of
$\KK[T_1, \ldots, T_r]$ given by 
\begin{eqnarray*}
\deg(T_i) 
& := &
\begin{cases}
1 &   1 \le i  \le d,
\\
0 & d+1 \le i \le r.
\end{cases}
\end{eqnarray*}
Then we can write 
$f_0 = g_{k_0} + \ldots + g_{k_m}$ 
with $g_{k_i}$ homogeneous of degree
$k_i \in \ZZ_{\ge 0}$ and 
$k_0 < \ldots < k_m$.
We call
$f_0 \in \KK[T_1, \ldots, T_r]$
{\em admissible}, if 
$g_{k_0}$ is an irreducible 
polynomial in at least two variables 
and, moreover, 
$\b{X}_0 = V(f_0)$ intersects the toric orbit 
$0 \times \TT^{r-d}$ of $\KK^r$.
Then~\cite[Prop.~7.2]{Ha2} says the following.

\begin{proposition}
\label{recipe}
Suppose that the polynomial 
$f_0 \in \KK[T_1, \ldots, T_r]$
is admissible.
Then the proper transform $X_1$ has $\b{X}_1$ 
as its total coordinate space,
and the Cox ring of $X_1$ is given as
$$ 
\mathcal{R}(X_1) 
\ = \ 
\KK[T_1, \ldots, T_r,T_\infty] / \bangle{f_1},
\qquad
f_1 
\ := \ 
\frac{f_0(T_1T_\infty, \ldots, T_dT_\infty,T_{d+1}, \ldots, T_r)}{T_\infty^d},
$$
where $d \in \NN$ is maximal such that $f_1$ stays a polynomial.
The divisor class group of $X_1$ is given by 
$\Cl(X_1) \cong K_1$ and the degrees of the variables 
$T_1, \ldots, T_r,T_\infty$ is given by 
$\deg(T_i) = Q_1(e_i)$.
\end{proposition}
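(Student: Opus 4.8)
The plan is to identify the candidate total coordinate space $\b{X}_1$ with the proper transform of $\b{X}_0$ under the explicit lift $\b{\pi} \colon \b{Z}_1 \to \b{Z}_0$, to read off its defining equation, and then to verify that this proper transform really is the total coordinate space of $X_1$, so that its algebra of global functions becomes the asserted Cox ring. First I would compute the total transform $\b{\pi}^{-1}(\b{X}_0) = V(f_0 \circ \b{\pi})$. Writing $f_0 = g_{k_0} + \ldots + g_{k_m}$ with $g_{k_i}$ homogeneous of degree $k_i$ for the $\ZZ_{\ge 0}$-grading $\deg(T_i) = 1$ if $i \le d$ and $0$ otherwise, the explicit form of $\b{\pi}$ gives $g_{k_i}(T_1T_\infty, \ldots, T_dT_\infty, T_{d+1}, \ldots, T_r) = T_\infty^{k_i} g_{k_i}$, hence $f_0 \circ \b{\pi} = T_\infty^{k_0} f_1$, where $f_1$ is coprime to $T_\infty$ since $g_{k_0}$ does not involve $T_\infty$; here $k_0$ is exactly the maximal power of $T_\infty$ one can extract, matching the phrase ``$d$ maximal such that $f_1$ stays a polynomial'' in the displayed formula. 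Since $V(T_\infty) \subseteq \b{Z}_1$ is the exceptional part introduced by the stellar subdivision at $v_\infty$, the total transform divisor splits as $k_0$ times the exceptional divisor plus the residual locus $V(f_1)$, so the proper transform is $\b{X}_1 = V(f_1)$, identifying it with the asserted zero set. The admissibility hypotheses enter precisely here: $g_{k_0}$ being irreducible in at least two variables guarantees that $f_1$ is prime, so that $\b{X}_1$ is an irreducible reduced variety, and the condition that $\b{X}_0$ meets the orbit $0 \times \TT^{r-d}$ ensures that $X_0$ passes through the center of the blow up, so that $X_1 \to X_0$ is a genuine modification and $\b{X}_1$ is the proper, not the total, transform.

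The main work, and the step I expect to be the chief obstacle, is to promote the identification $\b{X}_1 = V(f_1)$ to the statement that $\b{X}_1 = \Spec(\mathcal{R}(X_1))$ really is the total coordinate space of $X_1$; for this I would invoke the general theory of toric ambient modifications from~\cite{Ha2}. Concretely, one has to check three things: that the $K_1$-grading given by $\deg(T_i) = Q_1(e_i)$ restricts from $Z_1$ to an isomorphism $\Cl(Z_1) \cong K_1 \to \Cl(X_1)$; that $\rq{X}_1 = \b{X}_1 \cap \rq{Z}_1$ is the universal torsor over $X_1$, i.e.\ that $p_1$ realizes $X_1$ as the good quotient $\rq{X}_1 \quot H_1$ by $H_1 := \Spec(\KK[K_1])$; and that the complement $\b{X}_1 \setminus \rq{X}_1$ is of codimension at least two in $\b{X}_1$, so that $\mathcal{R}(X_1) = \Gamma(\rq{X}_1, \mathcal{O}) = \Gamma(\b{X}_1, \mathcal{O}) = \KK[T_1, \ldots, T_r, T_\infty]/\bangle{f_1}$. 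One must also confirm that this quotient ring is normal and that the classes of the $T_i$ are pairwise non-associated primes, so that it genuinely carries the defining properties of a Cox ring. The two delicate points are the isomorphism on divisor class groups and the codimension estimate; in~\cite{Ha2} both are controlled by the requirement that $\Sigma_0$ be chosen so that $X_0$ agrees with $p_0(\rq{X}_0)$ — available because $X_0$ is a surface — together with admissibility of $f_0$, which rules out the small birational discrepancy flagged before the statement.

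Granting these verifications, the remaining assertions are immediate: the isomorphism $\Cl(X_1) \cong K_1$ and the degree formula $\deg(T_i) = Q_1(e_i)$ are simply the restriction to $\b{X}_1$ of the toric Cox data for $Z_1$ furnished by Construction~\ref{constr:cox} applied to the subdivided fan $\Sigma_1$. Thus the only genuinely new content beyond the computation of $f_1$ is the ambient-modification argument identifying $\b{X}_1$ as the total coordinate space, and this is exactly where I would lean on~\cite{Ha2}.
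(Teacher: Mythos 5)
Your proposal is correct and, in substance, takes the same route as the paper: the paper offers no proof of Proposition~\ref{recipe} at all, quoting it directly as~\cite[Prop.~7.2]{Ha2}, so the verifications you list (class-group isomorphism, torsor property, small complement, normality and primality of the generators) are exactly the content that is deferred to that reference in both cases. Your preliminary computation that $f_0 \circ \b{\pi} = T_\infty^{k_0} f_1$ with $f_1$ coprime to $T_\infty$ — so that the ``maximal $d$'' in the displayed formula is the lowest weight $k_0$ of $f_0$ — is accurate and correctly explains the role of admissibility.
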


We turn to the 
fourth Hirzebruch surface $\FF_4$.
If $q \colon \FF_4 \to \PP_1$ denotes the
bundle projection, we write
$C_1, C_2, C_3, C_5  \subseteq  \FF_4$
for the section at infinity,
the fiber $q^{-1}(0)$,
 the fiber $q^{-1}(\infty)$
and the zero section
respectively.
 As a toric variety, $\FF_4$ arises from the fan
\begin{center}
\begin{picture}(0,0)%
\includegraphics{fbblow.pstex}%
\end{picture}%
\setlength{\unitlength}{1243sp}%
\begingroup\makeatletter\ifx\SetFigFontNFSS\undefined%
\gdef\SetFigFontNFSS#1#2#3#4#5{%
  \reset@font\fontsize{#1}{#2pt}%
  \fontfamily{#3}\fontseries{#4}\fontshape{#5}%
  \selectfont}%
\fi\endgroup%
\begin{picture}(3423,4566)(2911,-4594)
\put(4276,-2986){\makebox(0,0)[lb]{\smash{{\SetFigFontNFSS{6}{7.2}{\familydefault}{\mddefault}{\updefault}{\color[rgb]{0,0,0}$v_1$}%
}}}}
\put(4501,-2086){\makebox(0,0)[lb]{\smash{{\SetFigFontNFSS{6}{7.2}{\familydefault}{\mddefault}{\updefault}{\color[rgb]{0,0,0}$v_2$}%
}}}}
\put(3376,-1861){\makebox(0,0)[lb]{\smash{{\SetFigFontNFSS{6}{7.2}{\familydefault}{\mddefault}{\updefault}{\color[rgb]{0,0,0}$v_5$}%
}}}}
\put(2926,-4336){\makebox(0,0)[lb]{\smash{{\SetFigFontNFSS{6}{7.2}{\familydefault}{\mddefault}{\updefault}{\color[rgb]{0,0,0}$v_3$}%
}}}}
\end{picture}%

\end{center}
and the curves $C_1, C_2, C_3, C_5$ are 
the toric curves corresponding to the 
rays through $v_1, v_2, v_3, v_5$ respectively.
In the sequel, we consider blow ups of 
$\FF_4$ and we will denote any proper 
transform of some $C_i$ again by $C_i$.

\begin{proposition}
\label{prop:f4bl3}
Let $X$ be the blow up of $\FF_4$ at 
points 
$c_0, c_\infty, c_1 \in \FF_4 \setminus C_1$, 
no two of them lying in a common fiber of 
$q \colon \FF_4 \to \PP_1$, and
let $C_4 \subseteq X$ 
be the exceptional divisor over
$c_0$.
Then $X$ is a smooth surface with 
\begin{eqnarray*}
\Cl(X) 
& \cong & 
\ZZ \mal w_1 \ \oplus \ \ldots \ \oplus \ \ZZ \mal  w_5,
\end{eqnarray*} 
where $w_i \in \Cl(X)$ denotes the 
class of the curve $C_i \subseteq X$.
Moreover, the Cox ring
of $X$ is the polynomial ring 
\begin{eqnarray*}
\mathcal{R}(X)
& = & 
\KK[T_1, \ldots, T_8]
/ \bangle {T_2T_4 + T_3T_6 + T_7T_8},
\end{eqnarray*}
and, with respect to the basis $(w_1, \ldots, w_5)$
of $\Cl(X)$, the degree of the generator
$T_i \in \mathcal{R}(X)$ is 
the $i$-th column of the matrix:
$$ 
\left[
\begin{array}{rrrrrrrr}
1 & 0 & 0 & 0 & 0 & 0 & -1 & 1
\\
0 & 1 & 0 & 0 & 0 & 1 & -2 & 3 
\\
0 & 0 & 1 & 0 & 0 & -1 & -1 & 1 
\\
0 & 0 & 0 & 1 & 0 & 1 & -1 & 2 
\\
0 & 0 & 0 & 0 & 1 & 0 & 1 & -1 
\end{array}
\right]
$$
\end{proposition}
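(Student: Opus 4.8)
The plan is to compute $\mathcal{R}(X)$ by the technique of toric ambient modifications recalled in Construction~\ref{constr:cox} and Proposition~\ref{recipe}, blowing up the three points $c_0,c_\infty,c_1$ one at a time. First I would record the toric fan of $\FF_4$ shown above together with the dual sequences $P_0,Q_0$ of Construction~\ref{constr:cox}, so that $\FF_4$ is presented as a toric surface with Cox ring $\KK[T_1,T_2,T_3,T_5]$ graded by $\Cl(\FF_4)\cong\ZZ^2$, the variables cutting out the toric curves $C_1,C_2,C_3,C_5$. Using the automorphisms of $\FF_4$ (fibrewise translations, scalings, and automorphisms of the base $\PP_1$) I would normalize the configuration so that the fibre through $c_0$ is $C_2=q^{-1}(0)$ and the fibre through $c_\infty$ is $C_3=q^{-1}(\infty)$, retaining the hypotheses that all three points avoid $C_1$ and that no two share a fibre. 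This normalization is what makes the first two blow ups toric and leaves the third point genuinely general.

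Since the blow up of $\FF_4$ at the first two points is a toric surface, I would obtain it from the twice subdivided fan; the essential, non-toric step is the last blow up, computed by a single toric ambient modification. Concretely I would present this intermediate surface as a hypersurface $X_0=V(f_0)$ in a three-dimensional toric variety $Z_0$ following the outline preceding Proposition~\ref{recipe}, choosing a complete simplicial fan $\Sigma_0$ in $N$ whose rays carry the vectors $v_i=P_0(e_i)$, and arranging the embedding so that the toric curve attached to the cone $\sigma_0=\cone(v_1,\ldots,v_d)$ meets $X_0$ in exactly the third point $c_1$. Setting $v_\infty=v_1+\cdots+v_d$ and passing to the stellar subdivision $\Sigma_1$ then realizes the blow up at $c_1$ on proper transforms. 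After verifying that $f_0$ is admissible in the sense preceding Proposition~\ref{recipe}, that proposition yields
\begin{eqnarray*}
\mathcal{R}(X)
& = &
\KK[T_1,\ldots,T_8]/\bangle{T_2T_4+T_3T_6+T_7T_8},
\end{eqnarray*}
where the relation is the transform $f_1=f_0(T_1T_\infty,\ldots,T_dT_\infty,T_{d+1},\ldots,T_r)/T_\infty^d$, together with $\Cl(X)\cong K_1\cong\ZZ^5$; writing the columns $Q_1(e_i)$ in the basis $(w_1,\ldots,w_5)$ of the curve classes gives the stated degree matrix.

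Finally I would cross-check the outcome geometrically and isolate the crux. The three monomials of the relation all have degree $w_2+w_4$, which is the class $F$ of a fibre of the $\PP_1$-fibration $X\to\PP_1$ induced by $q$; since $h^0(F)=2$ and the monomials $T_2T_4$, $T_3T_6$, $T_7T_8$ represent the total transforms of the three fibres through $c_0$, $c_\infty$, $c_1$, the relation is exactly the linear dependence of these three members of the pencil $\vert F\vert$. This both explains its shape and verifies the identities $w_2+w_4=w_3+w_6=w_7+w_8$ visible in the degree matrix. The main obstacle is the geometric input into Proposition~\ref{recipe}: one must show that the chosen stellar subdivision genuinely realizes the blow up of the \emph{general} point $c_1$, that is, that the centre toric curve meets $X_0$ transversally in $c_1$ alone, that the eight $T_i$ remain pairwise non-associated primes, and that admissibility holds, so that $\b{X}_1$ is the total coordinate space of $X$ and not of a small modification of it. Encoding the general-position hypotheses on $c_0,c_\infty,c_1$ into these combinatorial and primality conditions is the delicate step; once it is settled, the computation of the relation and of the degrees is routine.
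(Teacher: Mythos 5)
Your strategy coincides with the paper's (normalize by automorphisms, blow up twice torically, realize the third blow up as a toric ambient modification via Proposition~\ref{recipe}), but the one concrete normalization you state is insufficient, and the claim you build on it is false as written. Placing $c_0$ and $c_\infty$ in the fibres $q^{-1}(0)$ and $q^{-1}(\infty)$ does \emph{not} make the blow ups at these points toric: for that the points must be fixed by the torus, i.e.\ must in addition lie on the zero section $C_5$. Worse, leaving $c_1$ ``genuinely general'' breaks the later step you defer. The only toric orbit closure of $Z_0$ that can cut $c_1$ out of $X_0$ is $V(T_5)\cap V(T_7)$, where $T_7$ is the redundant generator whose zero set on $X_0$ is the fibre through $c_1$; but $V(T_5)\cap V(T_7)\cap X_0$ is the point where that fibre meets the zero section, which equals $c_1$ only if $c_1\in C_5$. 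The paper's normalization resolves both problems at once and is the step your proposal is missing: move the three points into the fibres over $0,1,\infty$, choose a section $s\colon \PP_1\to\FF_4$ through all three (possible since they lie in distinct fibres off $C_1$), and apply the fibrewise translation $x\mapsto x-s(q(x))$. After this, all three points lie on $C_5$, with $c_0,c_\infty$ torus-fixed and $c_1$ the distinguished point of the open orbit of $C_5$ --- so $c_1$ is pinned down, not general.

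With that normalization the ``arranging the embedding'' you leave open becomes explicit, and this is where the actual work of the proof lies: one adds the redundant generator $T_7$ with relation $f_0=T_7-T_2T_4+T_3T_6$ (so that $T_2T_4$, $T_3T_6$ are the total transforms of the fibres through $c_0,c_\infty$ and $V(T_7)$ cuts out the fibre through $c_1$), chooses the fan $\Sigma_0$ with the rays $v_1',\ldots,v_7'$ dual to the resulting grading, checks admissibility (the degree-zero part $T_2T_4-T_3T_6$ is irreducible and $V(f_0)$ meets the orbit $\{T_5=T_7=0\}\times\TT^5$), and performs the stellar subdivision at $v_5'+v_7'$; Proposition~\ref{recipe} then yields the ring and the matrix $Q_1$. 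There remains the verification you list but do not carry out, namely that $X_1=X$, i.e.\ that the ambient modification really induces the blow up of the point $c_1$: the paper settles this by noting that $X_1$ is smooth, by \cite[Cor.~4.13]{Ha2}, and that the exceptional curve is smooth and rational, hence a $(-1)$-curve. Your closing geometric cross-check --- that the three monomials of the relation are the members of the fibre pencil through $c_0,c_\infty,c_1$, forcing the degree identities $w_2+w_4=w_3+w_6=w_7+w_8$ --- is correct and matches the paper's outcome, but it presupposes the grading matrix that only the omitted normalization and embedding allow one to compute.
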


\begin{proof}
We first reduce to the case that our three
points $c_0,c_1,c_\infty$ belong to the 
zero section $C_5 \subseteq \FF_4$ 
and within that $c_0$, $c_\infty$ 
are toric fixed points, 
whereas $c_1$ is the distinguished 
point of the nontrivial toric orbit;
note that $C_5$ is the closure of the 
toric orbit corresponding 
to $\cone(v_5)$.

We proceed in two steps.
First choose an automorphism of $\FF_4$
that moves $c_0,c_1$ and $c_\infty$ 
into the fibres over $0,1$ and $\infty$,
respectively.
Then construct a section $s \colon \PP_1 \to \FF_4$
mapping $0,1$ and $\infty$ to  $c_0,c_1$ and $c_\infty$,
and apply the automorphism $x \mapsto x - s(\pi(x))$,
where $\pi \colon \FF_4 \to \PP_1$ denotes the 
projection.

Blowing up the Hirzebruch surface $\FF_4$ at 
the points $c_0$ and $c_\infty$ gives 
a toric variety $X_0$; its fan looks as follows.
\begin{center}
\begin{picture}(0,0)%
\includegraphics{fb2blow.pstex}%
\end{picture}%
\setlength{\unitlength}{1243sp}%
\begingroup\makeatletter\ifx\SetFigFontNFSS\undefined%
\gdef\SetFigFontNFSS#1#2#3#4#5{%
  \reset@font\fontsize{#1}{#2pt}%
  \fontfamily{#3}\fontseries{#4}\fontshape{#5}%
  \selectfont}%
\fi\endgroup%
\begin{picture}(3423,4611)(2911,-4639)
\put(4276,-2986){\makebox(0,0)[lb]{\smash{{\SetFigFontNFSS{6}{7.2}{\familydefault}{\mddefault}{\updefault}{\color[rgb]{0,0,0}$v_1$}%
}}}}
\put(4501,-2086){\makebox(0,0)[lb]{\smash{{\SetFigFontNFSS{6}{7.2}{\familydefault}{\mddefault}{\updefault}{\color[rgb]{0,0,0}$v_2$}%
}}}}
\put(3376,-1861){\makebox(0,0)[lb]{\smash{{\SetFigFontNFSS{6}{7.2}{\familydefault}{\mddefault}{\updefault}{\color[rgb]{0,0,0}$v_5$}%
}}}}
\put(2926,-4336){\makebox(0,0)[lb]{\smash{{\SetFigFontNFSS{6}{7.2}{\familydefault}{\mddefault}{\updefault}{\color[rgb]{0,0,0}$v_3$}%
}}}}
\put(4276,-1636){\makebox(0,0)[lb]{\smash{{\SetFigFontNFSS{6}{7.2}{\familydefault}{\mddefault}{\updefault}{\color[rgb]{0,0,0}$v_4$}%
}}}}
\put(2926,-3436){\makebox(0,0)[lb]{\smash{{\SetFigFontNFSS{6}{7.2}{\familydefault}{\mddefault}{\updefault}{\color[rgb]{0,0,0}$v_6$}%
}}}}
\end{picture}%

\end{center}

\goodbreak

The matrix $P$ having $v_1, \ldots, v_6$ 
as its columns defines a surjection 
$\ZZ^6 \to \ZZ^2$ and hence an exact 
sequence.
As a matrix for the  
projection $\ZZ^6 \to \ZZ^4$ in the dual 
sequence, we may take
\begin{eqnarray*}
Q
& := & 
\left[
\begin{array}{rrrrrr}
1 & 0 & 0 & 0 & 1 & 0 
\\
0 & 1 & 0 & 0 & 3 & 1
\\
0 & 0 & 1 & 0 & 1 & -1
\\
0 & 0 & 0 & 1 & 2 & 1
\end{array}
\right]
\end{eqnarray*}

Assigning to $T_1, \ldots, T_6$ the columns 
$w_1, \ldots, w_6$ of $Q$ as their degrees, 
we obtain an action of the four torus 
$\TT^4 = (\KK^*)^4$ on $\KK^6$.
Note that $X_0$ is obtained as GIT-quotient 
of the set of $\TT^4$-semistable points 
associated to the 
$\TT^4$-linearization of the trivial bundle 
given by the weight $(3,11,2,10) \in \ZZ^4$;
in fact, by~\cite[Cor.~4.3]{Ha2},
we could take any weight from 
the relative interior of the moving cone
inside 
the moving cone 
\begin{eqnarray*}
\Mov(X)
& = & 
\bigcap_{i = 1}^6 \cone(w_j; \; j \ne i).
\end{eqnarray*}

In order to blow up the point $c_1 \in C_5$, 
we first embed $X_0$ in a suitable toric 
variety $Z_0$. 
Consider the polynomial ring 
$\KK[T_1, \ldots, T_7]$ with the 
$\ZZ^4$-grading given by 
$$
\deg(T_i) := w_i \text{ for } 1 \le i \le 6,
\qquad
\deg(T_7) := (0,1,0,1),
$$
and let $Q_0$ denote the matrix having these
degrees as its columns.
Then we have a surjection 
$\KK[T_1, \ldots, T_7] \to \KK[T_1, \ldots, T_6]$
of $\ZZ^4$-graded rings, defined by
$$ 
T_i \mapsto T_i \text{ for } 1 \le i \le 6,
\qquad
T_7 \mapsto T_2T_4 - T_3T_6.
$$
This gives a $\TT^4$-equivariant
embedding of $\b{X}_0 := \KK^6$ 
into $\b{Z}_0 := \KK^7$.
Note that the vanishing ideal of 
$\b{X}_0$ in $\b{Z}_0$ 
is generated by the polynomial
\begin{eqnarray*}
f_0 
& := &
T_7 -  T_2T_4 + T_3T_6.
\end{eqnarray*}

Consider the linearization of the trivial
bundle on $\b{Z}_0 = \KK^7$ given by 
the weight $(3,11,2,10) \in \ZZ^4$.
Then the corresponding set of semistable
points $\rq{Z}_0 \subseteq \b{Z}_0$
is an open toric subvariety.
The quotient $Z_0 := \rq{Z}_0 / \TT^4$ 
is a smooth projective toric variety;  
its fan $\Sigma_0$ has the 
columns $v_1', \ldots, v_7'$ 
of the matrix 
\begin{eqnarray*}
P_0
& = & 
\left[
\begin{array}{rrrrrrr}
0 & -1 &  1 & -1 & 0 & 1 & 0
\\
0 & -1 &  0 & -1 & 0 & 0 & 1
\\
-1 & -2 & -1 & -1 & 1 & 0 & -1
\end{array}
\right]
\end{eqnarray*}
corresponding to $Q_0$ as the 
primitive generators of its rays, 
and the 10 maximal cones of the fan $\Sigma_0$ 
are given as
$$ 
\cone(v_1',v_2',v_3'),\
\cone(v_1',v_2',v_7'),\
\cone(v_1',v_3',v_7'),\
\cone(v_2',v_3',v_6'),\
\cone(v_2',v_4',v_6'),
$$
$$
\cone(v_2',v_4',v_7'),\
\cone(v_3',v_6',v_7'),\
\cone(v_4',v_5',v_6'),\
\cone(v_4',v_5',v_7'),\
\cone(v_5',v_6',v_7').
$$

The closed embedding $\b{X}_0 \subseteq \b{Z}_0$
induces a closed embedding $X_0 \to Z_0$ of the quotient
spaces, and this is a neat embedding in the 
sense of~\cite{Ha2}, see~\cite[Prop.~3.14]{Ha2}.
By our choice of the embedding, the curve $C_5$ intersects 
the toric orbit corresponding to 
$\cone(v_5',v_7') \in \Sigma_0$ exactly in the point
$c_1$. 

Moreover, the polynomial $f_0$ is admissible,
the $g_{k_0}$-term is just $T_2T_4+T_3T_6$
and thus we may perform a toric ambient blow up
$Z_1 \to Z_0$
at the toric orbit closure corresponding to 
$\cone(v'_5,v'_7) \in \Sigma_0$.
Adding the column $v_5'+v_7' = (0,0,1)$
to the matrix $P_0$ gives the matrix $P_1$ 
describing the quotient presentation 
$\rq{Z}_1 \to Z_1$.

With the proper transform $X_1 \subseteq Z_1$ 
we obtain a modification $X_1 \to X_0$ 
of $X_0$ centered at the point $c_1 \in X_0$.
According to Proposition~\ref{recipe},
the Cox ring of $X_1$ is given as
\begin{eqnarray*}
\mathcal{R}(X_1)
& = & 
\KK[T_1, \ldots, T_8]
/ \bangle {T_7T_8 - T_2T_4 + T_3T_6},
\end{eqnarray*}
where the $\ZZ^5$-grading assigns to the 
generator $T_i$ the $i$-th column 
of the matrix $Q_1$ corresponding to $P_1$;
a direct computation shows that $Q_1$ 
is the matrix given in the assertion.

\goodbreak

We still have to check that $X_1 = X$ holds,
that means that the modification 
$X_1 \to X_0$ is indeed a blow up of 
the point $c_1 \in X_0$.
For this, note first that, 
according to~\cite[Cor.~4.13]{Ha2},
the variety $X_1$ inherits smoothness from its 
toric ambient variety $Z_1$.
Secondly, the exceptional curve 
over $c_1$ is smooth and rational,
and thus it must be a $(-1)$-curve.
\end{proof}

\section{K3-surfaces with a non-symplectic involution}
\label{sec-double}

We now take a closer look at (complex algebraic) 
K3-surfaces~$X$ admitting a non-symplectic involution,
i.e., an automorphism 
$\sigma \colon X \to X$ of order two such 
that $\sigma^* \omega_X = - \omega_X$ holds, 
where $\omega_X$ is a non zero holomorphic 
two form of $X$. 
Since $\Cl(X) = H^2(X,\ZZ) \cap \omega_X^{\perp}$ 
holds, and $\sigma$ is non-symplectic, 
one has 
$$
L^{\sigma}
\ := \
\{u \in H^2(X,\ZZ); \; \sigma^*(u)=u\}
\ \subseteq \ 
\Cl(X)
$$
for the fixed lattice.
The K3-surface $X$ is called 
{\em generic\/} if $\Cl(X)=L^{\sigma}$ 
holds; for fixed $\Cl(X)=L^{\sigma}$,
these K3-surfaces form a family of dimension
$20-\rk(L^{\sigma})$, see~\cite{n2}.
Our aim is to determine the Cox ring for 
the generic K3-surfaces with Picard number
$2\leq \varrho(X)\leq 5$, 
see Propositions~\ref{rhoX2} to~\ref{rhoX5},
and for those that are generic double 
covers of del Pezzo surfaces, 
see Proposition~\ref{doubledelp}.

For any K3-surface with a non-symplectic 
involution $\sigma \colon X \to X$, 
one has a quotient surface 
$Y := X / \bangle{\sigma}$ and the quotient 
map $\pi \colon X \to Y$.
We will use the following basic facts.

\begin{proposition}
\label{genk3double}
Let $X$ be a generic K3-surface with
a non-symplectic involution 
$\sigma \colon X \to X$.
Then the quotient map $\pi \colon X \to Y$
is a double cover and
\begin{enumerate}
\item 
if $\pi\colon X \to Y$ is unramified then 
the quotient surface $Y$ is an Enriques 
surface, 
\item
if $\pi\colon X \to Y$ is ramified, 
then $Y$ is a smooth rational surface and  
the following statements hold:
\begin{enumerate}
\item 
the branch divisor $B \in \WDiv(Y)$ 
of $\pi$ is smooth and, denoting by 
$K_Y$ the canonical 
divisor of $Y$, we have 
$$
\pi^*(B) 
\ = \ 
2\pi^{-1}(B),
\qquad 
B 
\ \sim \ 
-2 K_Y,
$$
\item 
the pullback $\pi^* \colon \Cl(Y) \to \Cl(X)$ 
is injective and $\pi^*(\Cl(Y))$ is 
of index $2^{n-1}$ in
$\Cl(X)$, where $n$ is the number of 
components of $B$.
\end{enumerate}
\end{enumerate}
\end{proposition}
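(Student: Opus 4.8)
The plan is to fix the local structure of the involution first, then split into the unramified and ramified cases, and finally extract the index in (ii)(b) from lattice theory. Since $\sigma$ has order two, $\pi \colon X \to Y$ is a degree two map. The fixed locus of a finite order automorphism of a smooth variety in characteristic zero is smooth, and at a fixed point $d\sigma$ is an involution of the tangent plane; as $\sigma$ is non-symplectic, $\sigma^*$ acts by $-1$ on $\omega_X$, so $\det(d\sigma) = -1$ at every fixed point and the eigenvalues of $d\sigma$ are $1$ and $-1$. Hence $\mathrm{Fix}(\sigma)$ is a disjoint union of smooth curves, the local model of $\pi$ transverse to such a curve is $(z_1,z_2) \mapsto (z_1,z_2^2)$, and therefore $Y$ is smooth and the branch divisor $B$, being the isomorphic image of $\mathrm{Fix}(\sigma)$ under $\pi$, is smooth as well. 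The dichotomy of the statement is precisely $\mathrm{Fix}(\sigma) = \emptyset$ versus $\mathrm{Fix}(\sigma) \neq \emptyset$.

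For (i), when $\pi$ is unramified it is an \'etale double cover, so $K_X = \pi^*K_Y$; as $K_X$ is trivial and $\pi^*$ is injective this forces $2K_Y \sim 0$, while $K_Y \not\sim 0$ since the non-symplectic form $\omega_X$ does not descend to $Y$. With $q(Y) = 0$, inherited from $b_1(X) = 0$, this is exactly the defining data of an Enriques surface. In the ramified case the ramification formula reads $0 = K_X = \pi^*K_Y + \pi^{-1}(B)$ together with $\pi^*(B) = 2\pi^{-1}(B)$; pushing forward and using $\pi_*\pi^{-1}(B) = B$ and $\pi_*\pi^* = 2$ gives $B \sim -2K_Y$, which is (ii)(a). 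Rationality of $Y$ then follows from Castelnuovo's criterion, since $q(Y) = 0$ and every plurigenus vanishes: already $h^0(2K_Y) = h^0(-B) = 0$ because $B$ is effective and non-zero.

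It remains to prove (ii)(b). Injectivity of $\pi^*$ is immediate from $\Cl(Y)$ being free, which holds as $Y$ is a smooth rational surface, together with \cite[Ex.~1.7.6]{Fu}, exactly as in the proof of Proposition~\ref{prop:abcovfingen}. For the index I first note that $\pi^*\pi_* = 1 + \sigma^*$ on $\Cl(X)$, which equals $2\,\id$ because genericity gives $\Cl(X) = L^\sigma$, i.e.\ $\sigma^* = \id$; hence $2\Cl(X) \subseteq \pi^*\Cl(Y)$, the cokernel is finite and $2$-elementary, and both lattices have the same rank $r := \rk\,\Cl(X)$. I would then compare discriminants: pullback along a degree two map multiplies the intersection form by two, so on $\pi^*\Cl(Y)$ the form is $2$ times that of $\Cl(Y)$, and since $\Cl(Y) = H^2(Y,\ZZ)$ is unimodular for a smooth rational surface, $|\mathrm{disc}(\pi^*\Cl(Y))| = 2^r$. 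By Nikulin the invariant lattice $\Cl(X) = L^\sigma$ is $2$-elementary with discriminant group $(\ZZ/2)^a$, so the sublattice formula $|\mathrm{disc}(\pi^*\Cl(Y))| = [\Cl(X):\pi^*\Cl(Y)]^2\,|\mathrm{disc}(\Cl(X))|$ yields $[\Cl(X):\pi^*\Cl(Y)]^2 = 2^{\,r-a}$.

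The final step is to identify $(r-a)/2$ with $n-1$, and this is where the real content lies. By Nikulin's description of the fixed locus of a non-symplectic involution, $\mathrm{Fix}(\sigma)$ is one curve of positive genus together with $(r-a)/2$ rational curves, with the degenerate value $(r,a)=(10,8)$ giving two disjoint elliptic curves; in all cases the number of components, hence the number $n$ of components of $B$, is $(r-a)/2 + 1$. Substituting $r - a = 2(n-1)$ gives $[\Cl(X):\pi^*\Cl(Y)] = 2^{\,n-1}$. The main obstacle is thus not the homological bookkeeping but this input from Nikulin's theory of $2$-elementary lattices and its count of fixed components. A more self-contained alternative would be to show directly that $\Cl(X)/\pi^*\Cl(Y)$ is generated by the reduced ramification classes of the $n$ components subject only to the single relation that their sum equals $\pi^*(-K_Y)$; but verifying that these classes both generate the cokernel and satisfy no further relation appears to require essentially the same lattice information, so I would present the discriminant computation as the primary route.
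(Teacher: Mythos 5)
Your proof of the key part, (ii)(b), is essentially the paper's own argument: injectivity of $\pi^*$ from freeness of $\Cl(Y)$ together with \cite[Ex.~1.7.6]{Fu}; finite index from genericity; the discriminant comparison $[\Cl(X):\pi^*(\Cl(Y))]^2 = \det \pi^*(\Cl(Y))/\det\Cl(X)$ (the paper cites \cite[Lemma~2.1]{BPV} for this); unimodularity of $\Cl(Y)$ giving $2^{\varrho(Y)}$ in the numerator; and Nikulin's \cite[Thm.~4.2.2]{n2} used twice, once for the $2$-elementarity of $L^\sigma$ and once for the identity $\varrho(Y)-a = 2(n-1)$ relating the lattice invariant to the number of branch components. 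Your only variations there are cosmetic: you obtain finite index from $\pi^*\pi_* = 1+\sigma^* = 2\,\id$ rather than from $\pi^*(\Cl_\QQ(Y)) = \Cl_\QQ(X)^\sigma$. Where you genuinely depart from the paper is in (i) and (ii)(a): the paper simply cites \cite[Lemma~1.2]{Z}, whereas you prove these directly (eigenvalue analysis of $d\sigma$ at fixed points, Hurwitz, pushforward, Castelnuovo's rationality criterion). That self-contained treatment is correct in outline and is a modest gain in completeness over the citation.

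However, one step of your case (i) fails as written. For an unramified double cover $\pi\colon X \to Y$ the pullback $\pi^*$ on divisor classes is \emph{not} injective: its kernel is generated precisely by the $2$-torsion class defining the cover, which here is $K_Y$. Your inference ``$K_X$ trivial and $\pi^*$ injective forces $2K_Y \sim 0$'' is moreover incoherent on its own terms: injectivity together with $\pi^*K_Y = K_X \sim 0$ would force $K_Y \sim 0$, contradicting the nontriviality of $K_Y$ that you assert in the very next clause. The correct derivation uses the pushforward instead: $2K_Y = \pi_*\pi^*K_Y = \pi_*K_X \sim 0$, and then $K_Y \not\sim 0$ follows, as you say, from the absence of a $\sigma$-invariant holomorphic $2$-form. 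This is a localized and easily repaired slip, occurring in the part of the statement the paper outsources to Zhang; the remainder of your argument, including all of (ii), is sound.
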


\begin{proof}
The fact that $\pi \colon X \to Y$ is a 
double cover and~(i) as well as~(ii)
up to part~(b) are known, see~\cite[Lemma~1.2]{Z}.
In order to show part~(b) of~(ii) note first
that $\Cl(Y)$ is free, because $Y$ arises 
by blowing up points from $\PP_2$ or a 
Hirzebruch surface. 
According to~\cite[Ex.~1.7.6]{Fu}, 
we have
$$
\pi^*(\Cl_\QQ(Y)) 
\ = \
\Cl_\QQ(X)^\sigma,
\qquad
\pi_*\pi^*(\Cl(Y))
\ = \ 
2 \Cl(Y).
$$
Since $X$ is generic, the first
equation tells us that $\pi^*(\Cl(Y))$ 
is of finite index in~$\Cl(X)$.
The second one shows that $\pi^*$ 
is injective.
Moreover, by~\cite[Lemma~2.1]{BPV}, 
we have 
\begin{eqnarray*}
[\pi^*(\Cl(Y)):\Cl(X)]^2 
& = & 
\frac{\det \pi^*(\Cl(Y))}{\det \Cl(X)}.
\end{eqnarray*}
Since $\Cl(Y)$ is unimodular, see~\cite{BPV},
the numerator equals $2^{\varrho(Y)}$. 
By~\cite[Thm.~4.2.2]{n2},
the lattice $L^{\sigma}=\Cl(X)$ has determinant  
$2^l$ and
the difference $\varrho(Y)-l$ equals $2(n-1)$,
where $n$ is the number of connected 
components of the branch divisor. 
 \end{proof}

\begin{lemma}
\label{lem:doublecov}
Let $X$ be a generic K3-surface with
a non-symplectic involution 
such that the associated double cover
$\pi \colon X \to Y$ has 
branch divisor $B=C_1 + C_B$, 
where $C_1 \subseteq Y$ is a smooth 
rational curve and $C_B \subseteq Y$ 
is any irreducible curve.
\begin{enumerate}
\item
Let $w_1 \in \Cl(Y)$ be the class 
of $C_1 \subseteq Y$.
Then $(w_1,w_2, \ldots, w_r)$ is a basis of 
$\Cl(Y)$ if and only if
$(\pi^*(w_1)/2, \pi^*(w_2), \ldots, \pi^*(w_r))$ 
is a basis of $\Cl(X)$.
\item 
With respect to bases as in~(ii), 
the homomorphism
$\pi^* \colon \Cl(Y) \to \Cl(X)$ 
is given by the matrix
\begin{eqnarray*}
A 
& := &
{\tiny 
\left[
\begin{array}{cccc}
2 &   &        & 0 \\ 
  & 1 &        &   \\
  &   & \ddots &   \\
0 &   &        & 1
\end{array}
\right]
}.
 \end{eqnarray*}
\item 
Let $C \subseteq X$ be any smooth rational 
curve and let $w \in \Cl(X)$ be its class. 
Then precisely one of the following 
statements holds:
\begin{enumerate}
\item 
$\pi(C)$ is a component of $B$ 
and $\pi(C)^{2}=-4$,
\item
$w \in \pi^*(\Cl(Y))$ and $\pi(C)^2=-1$. 
\end{enumerate}
\end{enumerate}
\end{lemma}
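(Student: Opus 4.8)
The plan is to deduce~(ii) from~(i), to prove~(i) by an index computation for the sublattice $\pi^*(\Cl(Y)) \subseteq \Cl(X)$, and to handle~(iii) by using that genericity forces every $(-2)$-curve on $X$ to be $\sigma$-invariant. The geometric input throughout is Proposition~\ref{genk3double}. Since $C_1$ is a component of the smooth branch divisor $B = C_1 + C_B$, its components are disjoint, and matching supports in the relation $\pi^*(B) = 2\pi^{-1}(B)$ yields $\pi^*(w_1) = 2[\t C_1]$, where $\t C_1 := \pi^{-1}(C_1)$ is the reduced preimage. Thus $u_1 := \pi^*(w_1)/2 = [\t C_1]$ is an integral class, so~(i) and~(ii) are meaningful; as $C_1$ lies in the branch locus, $\pi$ restricts to an isomorphism $\t C_1 \to C_1 \cong \PP_1$, so $\t C_1$ is a $(-2)$-curve. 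Granting~(i), assertion~(ii) is immediate, since in the two matching bases one has $\pi^*(w_1) = 2u_1$ and $\pi^*(w_j) = \pi^*(w_j)$ for $j \ge 2$, which is exactly the matrix $A$.

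For~(i), note that $B$ has exactly two components, so Proposition~\ref{genk3double} gives $[\Cl(X):U] = 2$ for $U := \pi^*(\Cl(Y))$. Since $2u_1 = \pi^*(w_1) \in U$, the decisive observation is that $u_1 \in U$ holds if and only if $w_1$ is divisible by $2$ in $\Cl(Y)$: indeed $u_1 = \pi^*(v)$ forces $\pi^*(w_1) = \pi^*(2v)$, hence $w_1 = 2v$ by injectivity of $\pi^*$, and conversely. Now if $(w_1, \ldots, w_r)$ is a basis of $\Cl(Y)$, then $w_1$ is primitive, so $u_1 \notin U$; the lattice generated by $u_1, \pi^*(w_2), \ldots, \pi^*(w_r)$ then contains $U$ with $u_1$ of order two modulo $U$, so it has index $2$ over $U$ and hence equals $\Cl(X)$, whence these $r$ elements form a basis. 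Conversely, if $(u_1, \pi^*(w_2), \ldots, \pi^*(w_r))$ is a basis of $\Cl(X)$, the vectors $\pi^*(w_i)$ are expressed in it by the matrix $A$ of determinant $2$, so they span the index-$2$ sublattice $U$; as $\pi^* \colon \Cl(Y) \to U$ is an isomorphism, $(w_1, \ldots, w_r)$ is a basis of $\Cl(Y)$.

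For~(iii), the crucial step is that for generic $X$ one has $\Cl(X) = L^\sigma$, so $\sigma$ preserves every divisor class. A smooth rational curve $C$ has $C^2 = -2$ and so is the unique effective divisor in its class; since $[\sigma(C)] = [C]$ and $\sigma(C)$ is effective, this forces $\sigma(C) = C$. If $\sigma|_C = \mathrm{id}$, then $C \subseteq \mathrm{Fix}(\sigma) = \pi^{-1}(B)$, so $D := \pi(C)$ is a component of $B$, $\pi$ restricts to an isomorphism $C \to D$, and $\pi^*(D) = 2C$; then $2D^2 = (\pi^*D)^2 = 4C^2 = -8$ gives $D^2 = -4$, which is case~(a). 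If $\sigma|_C \ne \mathrm{id}$, then $C \not\subseteq \pi^{-1}(B)$, so $D := \pi(C)$ is not a component of $B$, $\pi^{-1}(D) = C$ with $\pi|_C$ of degree two, and hence $\pi^*(D) = C$; thus $w = [C] = \pi^*[D] \in \pi^*(\Cl(Y))$, and $2D^2 = (\pi^*D)^2 = w^2 = -2$ gives $D^2 = -1$, which is case~(b). As $-4 \ne -1$, exactly one case occurs.

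The main obstacle is the genericity step $\sigma(C) = C$: once the triviality of $\sigma^*$ on $\Cl(X)$ and the rigidity of $(-2)$-classes are combined, the remaining work is the lattice-index bookkeeping in~(i) and the routine intersection computations $(\pi^*D)^2 = 2D^2$ in~(iii).
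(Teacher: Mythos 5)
Your proof is correct and follows essentially the same route as the paper: parts~(i) and~(ii) rest on exactly the two facts from Proposition~\ref{genk3double}~(ii) — that $\pi^*(w_1)=2u_1$ for an integral class $u_1$ and that $\pi^*(\Cl(Y))$ has index two in $\Cl(X)$ — while part~(iii) uses genericity ($\Cl(X)=L^\sigma$) together with the rigidity of $(-2)$-curves to force $\sigma(C)=C$, followed by the same case distinction on whether $\sigma$ restricts to the identity on $C$. The only difference is presentational: you spell out the lattice-index bookkeeping and replace the paper's appeal to $h^0(w)=1$ (via Riemann--Roch) by the equivalent uniqueness of the effective divisor in a $(-2)$-class.
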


\begin{proof}
Since $\pi^* \colon \Cl(Y) \to \Cl(X)$ 
is injective, 
$(w_1,\ldots,w_r)$ is a basis of 
$\Cl(Y)$ if and only if
$(\pi^*(w_1), \ldots, \pi^*(w_r))$ is
a basis of $\pi^*(\Cl(Y))$.
By Proposition~\ref{genk3double}~(ii),
we have $\pi^*(w_1)=2u_1$ with 
some $u_1 \in \Cl(X)$.
Moreover, also by
Proposition~\ref{genk3double}~(ii),
the pullback $\pi^*(\Cl(Y))$ is of index 
$2$ in $\Cl(X)$.
This gives~(i) and~(ii).

To prove~(iii), let $w \in \Cl(X)$ 
denote the class of $C \subseteq X$. 
The adjunction formula and Riemann-Roch
give $w^2=-2$ and $h^0(w)=1$. 
Since the elements of $\Cl(X)$ 
are fixed under the involution 
$\sigma \colon X \to X$, 
we can conclude $\sigma(C) = C$.
If $\sigma=\id$ holds on $C$, 
then $C$ is contained in the 
ramification divisor.
By Proposition~\ref{genk3double}~(ii),
we have $2C = \pi^{-1}(\pi(C))$, 
which implies $\pi(C)^2=-4$. 
If $\sigma \ne \id$ on $C$,
then the restriction $\pi \colon C \to \pi(C)$ 
is a double cover.
This implies 
$C=\pi^{-1}(\pi(C))$ and 
$\pi(C)^2 = 1/2 \mal C^2=-1$.  
\end{proof}

We are ready to describe the quotient surfaces 
of generic K3-surfaces with small Picard number.
In the sequel, we denote by $\Bl_k(Z)$ the blow 
up of a variety $Z$ in $k$ general points.
Moreover, we adopt the standard notation 
for integral lattices, see~\cite[Sec.~2, Chap.~I]{BPV},
and $L(k)$ denotes the lattice obtained 
from $L$ by multiplying the intersection matrix 
by $k$.

\begin{proposition}
\label{quot}
Let $X$ be a generic K3-surface $X$ with 
a non-symplectic involution 
and associated double cover 
$\pi \colon X \to Y$.
For $2 \le \varrho(X) \le 5$,
the table 

\medskip

\begin{center}
{\small
\begin{tabular}{c | l | l | l}
\label{lats}
$\varrho(X)$ & $\Cl(X)$ & Y  & B\\
\midrule
$2$  
& 
$U$,\ $U(2)$,\ $(2)\oplus A_1$ 
&  
$\FF_4,\ \FF_0,\ \Bl_1(\PP^2)$ 
&
$\PP_1 + C_{10}$, \ $C_9$, \ $C_9$
\\[5pt]
$3\leq k\leq 5$ 
& 
$U\oplus A_1^{k-2}$,\ $U(2)\oplus A_1^{k-2}$ 
&
$\Bl_{k-2}(\FF_4), \ \Bl_{k-2}(\FF_0)$  
&
$\PP_1 + C_{12-k}$, \ $C_{11-k}$
\end{tabular}
}
\end{center}

\medskip

\noindent
describes the intersection form of~$X$,
the quotient surface~$Y$
and the branch divisor $B$
of $\pi$,
where $C_g$ denotes a smooth irreducible curve of 
genus $g$. 
\end{proposition}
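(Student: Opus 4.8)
The plan is to combine Nikulin's classification of non-symplectic involutions with the geometric dictionary provided by Proposition~\ref{genk3double} and Lemma~\ref{lem:doublecov}. First I would recall that for a generic $X$ the fixed lattice $\Cl(X) = L^\sigma$ is a $2$-elementary hyperbolic lattice, that is, its discriminant group is isomorphic to $(\ZZ/2\ZZ)^a$ for some $a$. By Nikulin's classification~\cite{n2}, such a lattice is determined up to isometry by the triple $(r,a,\delta)$, where $r = \rk(L^\sigma)$, the integer $a$ is the length of the discriminant group, and $\delta \in \{0,1\}$ records whether the discriminant quadratic form takes half-integer values. Enumerating the admissible triples with $2 \le r \le 5$ yields $(2,0,0)$, $(2,2,0)$, $(2,2,1)$ for $r = 2$ and $(k,k-2,1)$, $(k,k,1)$ for $3 \le k = r \le 5$; identifying the corresponding isometry classes gives exactly the lattices $U$, $U(2)$, $(2)\oplus A_1$ and $U\oplus A_1^{k-2}$, $U(2)\oplus A_1^{k-2}$, which is the first column of the table.

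Next I would read off the branch divisor from the same invariants. Since $r < 10$ we are never in the fixed-point free case $(10,10,0)$, so by Proposition~\ref{genk3double} the cover $\pi$ is ramified, the surface $Y$ is smooth rational, and $B \sim -2K_Y$ is smooth. Nikulin's description of the fixed locus says that $\mathrm{Fix}(\sigma)$, and hence $B = \pi(\mathrm{Fix}(\sigma))$, consists of one curve of genus $g = (22 - r - a)/2$ together with $(r-a)/2$ pairwise disjoint rational curves; substituting the invariants reproduces the entries $C_9$, $\PP_1 + C_{10}$, $\PP_1 + C_{12-k}$, $C_{11-k}$ of the last column. I would cross-check these genera directly by adjunction on $Y$: in the irreducible case $2g - 2 = B\cdot(B + K_Y) = 2K_Y^2$ gives $g = K_Y^2 + 1$, while if $B = C_1 + C_B$ with $C_1$ the rational component then Lemma~\ref{lem:doublecov}(iii)(a) forces $C_1^2 = -4$, whence $K_Y \cdot C_1 = 2$, $C_1 \cdot C_B = 0$ and $g(C_B) = K_Y^2 + 2$. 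Using $K_Y^2 = 8$ for the minimal surfaces and $K_Y^2 = 10 - k$ after $k-2$ blow ups then matches every genus in the table.

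It remains to identify $Y$ itself. Combining $\varrho(Y) = a + 2(n-1)$ from Proposition~\ref{genk3double} with the number of components $n = (r-a)/2 + 1$ just found gives $\varrho(Y) = r$, so $Y$ is a rational surface of Picard number $r$. For $r = 2$ this makes $Y$ a Hirzebruch surface $\FF_m$: in the two-component case the $(-4)$-curve $C_1 \subseteq B$ is the unique negative irreducible curve, forcing $m = 4$; in the one-component case the invariant $\delta$ fixes the parity of the intersection form of $Y$, and ruling out the higher $\FF_m$ leaves $\FF_0$ when $\delta = 0$ and $\Bl_1(\PP^2) = \FF_1$ when $\delta = 1$. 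For $3 \le r \le 5$ the surface $Y$ is accordingly a blow up of $\FF_4$ or of $\FF_0$ in $r-2$ points, with the centres in general position and the exceptional curves recognised through the $(-1)$-curve alternative of Lemma~\ref{lem:doublecov}(iii)(b).

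The step I expect to be the main obstacle is exactly this last identification, since the numerical data $\varrho(Y)$ and $K_Y^2$ alone do not determine the minimal model or the blow up configuration. The decisive extra input is the genericity of $X$, namely that $\Cl(X)$ is the abstract lattice $L^\sigma$ and carries no further classes of square $-2$. Concretely, any smooth rational curve on $Y$ disjoint from $B$ pulls back to two disjoint $(-2)$-curves on $X$ (an \'etale double cover of $\PP_1$), and any special position of the blow up centres would produce such a curve, raising $\varrho(X)$ above $r$; excluding these configurations is what forces $Y = \FF_0$ rather than $\FF_2$ and keeps the centres general. Making this ``no unexpected rational curves'' principle precise, and applying it uniformly across the ranks $2 \le r \le 5$, is the crux of the argument.
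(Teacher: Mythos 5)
Your handling of the first and third columns is essentially the paper's own argument: both rest on Nikulin's classification of hyperbolic $2$-elementary lattices by the triple $(r,a,\delta)$ and on his description of the fixed locus (one curve of genus $(22-r-a)/2$ plus $(r-a)/2$ disjoint rational curves), and your adjunction cross-check is consistent with Proposition~\ref{genk3double}. The genuine divergence is in the identification of $Y$, and there you have correctly diagnosed, but not closed, the decisive step. The paper closes it with a key input you do not use: for $\Cl(X)=U\oplus A_1^{k-2}$ it invokes Kond\=o's lemma (see \cite{ko}) producing an elliptic fibration on $X$ with a section $E$ and $k-2$ reducible fibres; since the genus-$(12-k)$ fixed curve is transverse to the fibres, every fibre is $\sigma$-invariant and $E$ is the rational fixed curve, so Lemma~\ref{lem:doublecov} computes the intersection form on $\Cl(Y)$ in a geometric basis consisting of a ruling, a $(-4)$-section and $(-1)$-curves lying in distinct fibres and off the $(-4)$-section. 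This yields at once the minimal model $\FF_4$, the number of blow-ups, and the position of the centres. In the one-component cases the paper instead uses $\Cl(X)=\pi^*(\Cl(Y))$ from Proposition~\ref{genk3double}(ii), so the form on $\Cl(Y)$ is $U\oplus(-1)^{k-2}$, respectively $(1)\oplus(-1)$, and argues as before.

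Your alternative route -- $\varrho(Y)=r$, the $(-4)$-curve forcing $\FF_4$ as minimal model, $\delta$ fixing the parity, and genericity excluding extra rational curves -- is sound in outline, and the mechanism you name is the right one: any $(-2)$-curve $C\subseteq Y$ has $K_Y\cdot C=0$, hence is disjoint from $B\sim -2K_Y$, hence pulls back to two disjoint $(-2)$-curves interchanged by $\sigma$ whose classes cannot both lie in $L^\sigma=\Cl(X)$; this is exactly what rules out $Y=\FF_2$ when $\delta=0$ and what forbids special or infinitely near centres. But, as you concede, this crux is left as a sketch, and it is not routine: one must still prove that a rational surface with $\varrho(Y)=r$, no $(-2)$-curves and a $(-4)$-curve is a blow-up of $\FF_4$ at $r-2$ points in general position, which requires an MMP/elementary-transformation analysis (for instance, $\Bl_p\FF_3$ with $p$ on the negative section contains a $(-4)$-curve and no $(-2)$-curve, and one has to recognize it as such a blow-up of $\FF_4$). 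So the proposal is a viable and genuinely different route, but its load-bearing step is unproven; the paper's appeal to the elliptic fibration is precisely the device that makes that step unnecessary.
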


\begin{proof}
According to \cite[Sec.~4]{n2},
a lattice $L$ of rank at most five is the fixed 
lattice $L^{\sigma}$ of an involution 
$\sigma$ on a K3-surface if and only if it is 
an even lattice of signature $(1,k-1)$ which is 
$2$-elementary, i.e., satisfies 
$\Hom(L,\ZZ)/L=\ZZ_2^a$, where $2^a= \vert \det(L) \vert$.
Such lattices are classified up to isometries 
by three invariants: the rank $k$, the integer~$a$ 
and an invariant $\delta$ defined as 
\begin{eqnarray*}
\delta(L)
& = & 
\begin{cases}
0 & \text{if }  u^2 \in \ZZ \text{ for all } u \in \Hom(L,\ZZ),
\\
1 & \text{ otherwise}.
\end{cases}
\end{eqnarray*}
It is easy to check that the lattices in the table 
are the only $2$-elementary even lattices of 
signature $(1,k-1)$ with $2\leq k \leq 5$
since they cover all possible triples $(k,a, \delta)$,
see~\cite[Thm.~4.3.1]{n2} and also~\cite[Sec.~2.3]{AlNi}.

Now, suppose that the intersection form on $\Cl(X)$
is $U\oplus A_1^{k-2}$. Then it is known that 
there is an elliptic fibration 
$p \colon X\to \PP^1$ with a section $E$ and 
$k-2$ reducible fibers, see \cite[Lemma 3.1]{ko}.
In fact, if $e,f$ is the natural basis of 
$U$ and $v_1,\dots, v_{k-2}$ is an orthogonal 
basis of $A_1^{k-2}$, we can assume that 
the class of $E$ is $f-e$ and $v_i$ are 
represented by components of the reducible 
fibers not intersecting $E$.

By~\cite[Thm.~4.2.2]{n2} the ramification 
divisor of $\sigma$ is the disjoint union of 
a smooth irreducible curve of genus $12-k$ and 
a smooth irreducible rational curve. 
This implies that $C$ is transverse to the 
fibers of $p$, hence any fiber is preserved 
by $\sigma$ and the section $E$ is the rational 
curve in the ramification divisor. 

A basis of $\Cl(X)$ is given by 
$e,f-e, x_1,\dots, x_{k-2}$. 
It follows from Lemma~\ref{lem:doublecov}~(ii)
and~(iii) that the Picard lattice of $Y$ 
has intersection form
$$
\left(\begin{array}{cc}
0 & 1\\ 1 & -4\end{array}\right)
\ \oplus \ 
(-1)^{k-2}.
$$
Consequently, the classification of minimal rational 
surfaces yields that $Y$ is the blowing up of the 
Hirzebruch surface $\mathbb F_4$ at $k-2$ points.

Now assume that the intersection form on $\Cl(X)$ is
$U(2)\oplus A_1^{k-2}$.
Then, by~\cite[Thm.~4.2.2]{n2}, the ramification 
divisor has only one connected component and 
this is a smooth irreducible curve of genus $11-k$.
Thus, Proposition~\ref{genk3double}~(ii) gives
$\Cl(X)=\pi^*(\Cl(Y))$. 
It follows that the intersection form on
$\Cl(Y)$ is $U \oplus (-1)^{k-2}$. 
Hence, as before, we can conclude that $Y$ is 
the blowing up of $\mathbb F_0$ at $k-2$ points. 
  
Similarly, if the intersection form on $\Cl(X)$
is $(2)\oplus A_1$, then we obtain that the 
intersection form 
on $\Cl(Y)$ is $(1)\oplus (-1)$,
the ramificartion divisor is a smooth irreducible curve 
of genus $9$ and conclude that $Y$ is 
the blowing up of $\PP^2$ at one point.
\end{proof}

\goodbreak

\begin{proposition}
\label{f4double} 
Let $X$ be a generic K3-surface with a 
non-symplectic involution.
Suppose that 
\begin{itemize}
\item
the branch divisor 
of the associated double 
cover $\pi \colon X \to Y$ is of the 
form $B=C_1 + C_B$ with 
$C_1, C_B \subseteq Y$ irreducible 
and $C_1$ rational,
\item
the Cox ring of $Y$ 
is a polynomial ring $S = S'[t_1]$ with 
the canonical section $t_1$ of $C_1$
and a finitely generated $\CC$-algebra $S'$.
\end{itemize}
Moreover, denote by $f \in S'$ 
the canonical 
section of~$C_B$.
Then the Cox ring of $X$ is  
given as 
\begin{eqnarray*}
R 
& = &
\pi^*(S')[T_1,T_2] / \bangle{T_2^2-\pi^*(f)},
\end{eqnarray*}
with the $\Cl(X)$-grading defined by 
$\deg(\pi^*(g)) := \pi^*(\deg(g))$
for any homogeneous $g \in S'$ 
and 
$$
\deg(T_1) \ := \ \frac{\pi^*(w_1)}{2},
\qquad
\deg(T_2) \ := \ - \frac{\pi^*(2K_Y + w_1)}{2}.
$$
Moreover, the pullback homomorphism 
$\pi^* \colon S \to R$ of graded rings
is given on the grading groups by
$\ZZ^r \to \ZZ^r$, $w \mapsto Aw$  
and as a ring homomorphism by 
$$ 
t_1 \ \mapsto \ T_1^2, 
\qquad \qquad 
g  \ \mapsto \ \pi^*(g)
\text{ for any homogeneous } g \in S'.
$$
\end{proposition}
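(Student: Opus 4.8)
The plan is to deduce the statement from Proposition~\ref{prop:abcov} by first computing the Veronese subalgebra of $\mathcal{R}(X)$ supported on the index-two sublattice $\pi^*(\Cl(Y)) \subseteq \Cl(X)$, and then passing to the full Cox ring by adjoining a single square root. I would begin by fixing the geometry. By Proposition~\ref{genk3double}~(ii) the branch divisor satisfies $B = C_1 + C_B \sim -2K_Y$ and is smooth, so its components $C_1, C_B$ are disjoint; hence $\pi \colon X \to Y$ is exactly the cyclic double cover of Construction~\ref{constr:abcov} with $r=1$, $D_1 = -K_Y$, $n_1 = 2$, $E_1 = B$ and defining section $g_1 = t_1 f$, the canonical section of $B$ being the product of the canonical sections of its two components. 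Writing $\t{C}_1 := \pi^{-1}(C_1)$ and $\t{C}_B := \pi^{-1}(C_B)$ for the two components of the ramification divisor, Lemma~\ref{lem:doublecov}~(iii) gives $\pi^*C_1 = 2\t{C}_1$ and $\pi^*C_B = 2\t{C}_B$, so the classes are $[\t{C}_1] = \pi^*(w_1)/2 = \deg(T_1)$ and $[\t{C}_B] = \pi^*(-2K_Y-w_1)/2 = \deg(T_2)$. Denoting by $\tau_1, \tau_2 \in \mathcal{R}(X)$ the canonical sections of $\t{C}_1, \t{C}_B$, one has $\tau_1^2 = \pi^*(t_1)$ and, after rescaling, $\tau_2^2 = \pi^*(f)$, since both sides cut out $2\t{C}_B$ and carry the same degree.

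Next I would record the even part. Set $M := \pi^*(\Cl(Y))$, an index-two sublattice of $\Cl(X)$ by Proposition~\ref{genk3double}~(ii), and let $S := \bigoplus_{w \in M} \mathcal{R}(X)_w$ be the associated Veronese subalgebra. Applying Proposition~\ref{prop:abcov} with a subgroup of $\WDiv(Y)$ containing $D_1 = -K_Y$ and taking global sections identifies $S$ with $\mathcal{R}(Y)[T]/\bangle{T^2 - t_1 f}$, where $T$ corresponds to $\tau_1\tau_2$; in particular $S$ is generated as a $\CC$-algebra by $\pi^*(\mathcal{R}(Y)) = \pi^*(S')[\tau_1^2]$ together with $\tau_1\tau_2$. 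I would then define a graded homomorphism $\Phi \colon R \to \mathcal{R}(X)$ by $\pi^*(g) \mapsto \pi^*(g)$ for homogeneous $g \in S'$, $T_1 \mapsto \tau_1$ and $T_2 \mapsto \tau_2$; it is well defined because $\tau_2^2 = \pi^*(f)$, and homogeneous by the degree computations above. Since the even part of $R$ is generated by $\pi^*(S')$, $T_1^2$, $T_1T_2$ and $T_2^2$, which map onto the generators $\pi^*(S')$, $\tau_1^2$, $\tau_1\tau_2$, $\tau_2^2$ of $S$, the map $\Phi$ sends the even part of $R$ onto $S$, so $S \subseteq \Phi(R)$.

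Finally I would show $\Phi$ is an isomorphism by a normalization argument. The ring $R = S'[T_1,T_2]/\bangle{T_2^2-f}$ is a normal domain: $S'$ is normal because the polynomial ring $\mathcal{R}(Y) = S'[t_1]$ is, while the canonical section $f$ of the irreducible curve $C_B$ is a prime of the factorial ring $\mathcal{R}(Y)$, see~\cite{BeHa1}, lying in $S'$, hence prime there, so $T_2^2 - f$ is irreducible and defines a normal hypersurface over the normal ring $S'[T_1]$. A direct count gives $\dim R = \dim S' + 1 = 2 + \varrho(X) = \dim \mathcal{R}(X)$. Because $[K:M]=2$ forces $2w \in M$ for every $w \in \Cl(X)$, every homogeneous $h \in \mathcal{R}(X)$ has $h^2 \in S$, so $\mathcal{R}(X)$ is integral, hence module-finite, over $S \subseteq \Phi(R)$; thus $\dim \Phi(R) = \dim \mathcal{R}(X) = \dim R$, which forces $\ker \Phi = 0$, so $\Phi$ is injective and $\Phi(R) \cong R$ is normal. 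Moreover the field of fractions of $\mathcal{R}(X)$ is a degree-two extension of that of $S$, generated by any odd-degree element such as $\tau_1 \in \Phi(R)$, so $\Phi(R)$ and $\mathcal{R}(X)$ share their field of fractions; as $\mathcal{R}(X)$ is integral over the normal domain $\Phi(R)$, we conclude $\Phi(R) = \mathcal{R}(X)$. The asserted form of $\pi^*$ is then immediate from $\pi^*(t_1) = \tau_1^2 = T_1^2$ and Lemma~\ref{lem:doublecov}~(ii).

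The delicate point is the passage from the Veronese subalgebra $S$, which Proposition~\ref{prop:abcov} computes directly, to the full Cox ring, where the index-two discrepancy obliges one to adjoin the square root $\tau_1$ of $\pi^*(t_1)$. Rather than chasing generators and relations across the two cosets of $M$, I expect the argument to hinge entirely on the normality of $R$ together with the degree-two field extension and the integrality over $S$, which jointly pin down $\mathcal{R}(X)$ as the integral closure of $\Phi(R)$ in their common field of fractions.
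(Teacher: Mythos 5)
Your proposal is correct, and it shares the paper's skeleton---both compute the Veronese subalgebra of $\mathcal{R}(X)$ supported on $\pi^*(\Cl(Y))$ via Proposition~\ref{prop:abcov} and then close the index-two gap by a normality argument---but the mechanics of the second step are genuinely different. The paper builds the auxiliary algebra $R_L[u_1,u_2]/\bangle{u_1^2-\pi^*b_1,\,u_2^2-\pi^*b_2,\,u_1u_2-r}$, identifies it with $S[T]/\bangle{T^2-b_2}$ using the hypothesis $S=S'[t_1]$, observes that the resulting map $\psi$ onto $\mathcal{R}(X)$ is finite of degree one, and concludes that $\psi$ is an isomorphism because its source is normal; normality is obtained from the graded factoriality criterion of Scheja--Storch \cite{ScSt}, using that the canonical section of $C_B$ has odd degree in the $\ZZ$-grading by the $w_1$-coefficient. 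You instead map the candidate ring $R$ directly into $\mathcal{R}(X)$ by evaluating $T_1,T_2$ at the canonical sections $\tau_1,\tau_2$ of the two ramification components, prove injectivity of $\Phi$ by a dimension count, and prove surjectivity by combining three facts: the image contains the Veronese subalgebra, $\mathcal{R}(X)$ is integral over it with the same fraction field (any odd-degree element such as $\tau_1$ generates the degree-two extension), and $R$ is normal, so that $\mathcal{R}(X)$ must equal the integral closure of $\Phi(R)$, i.e.\ $\Phi(R)$ itself. The paper's route gets factoriality as a byproduct and needs no dimension or fraction-field bookkeeping; your route avoids the auxiliary two-variable algebra and the degree-one finiteness argument, replacing them with standard integral-closure reasoning. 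The one place where you are terser than the paper is precisely where it invests its main effort: your assertion that $T_2^2-f$ ``defines a normal hypersurface over the normal ring $S'[T_1]$'' is true, but it needs justification---either Serre's criterion (the cover is finite flat over a normal base, giving condition $(S_2)$, and is branched along the reduced divisor of the prime $f$, giving $(R_1)$) or a citation playing the role that \cite{ScSt} plays in the paper. Two small slips: the pullback formulas $\pi^*C_1=2\t{C}_1$ and $\pi^*C_B=2\t{C}_B$ come from Proposition~\ref{genk3double}~(ii)(a) rather than Lemma~\ref{lem:doublecov}~(iii), and your use of $S$ for the Veronese subalgebra collides with the statement's $S=\mathcal{R}(Y)$.
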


\begin{proof}
First note that by Proposition~\ref{prop:abcovfingen}, 
the Cox ring $R$ of $X$ inherits finite 
generation from the Cox ring $S$ of $Y$.
Consider the pullback group of $\Cl(Y)$ 
and the corresponding Veronese subalgebra 
$$
L \ := \ \pi^*(\Cl(Y)) \ \subseteq \ \Cl(X),
\qquad\qquad
R_{L} \ := \ \bigoplus_{w \in L} R_w.
$$

Write, for the moment $B = B_1+B_2$, and 
let $r$ and $b_i$ denote the canonical sections
of $\pi^{-1}(B)$ and $B_i$ respectively.
We claim that there is a commutative diagram
of finite ring homomorphisms
$$ 
\xymatrix{
R 
&
&
R_{L}
\ar[ll]
\ar[dl]
\\
& 
{\frac{R_{L}[u_1,u_2]}{\bangle{u_1^2-\pi^*b_1,u_2^2-\pi^*b_2,u_1u_2-r}}}
\ar[ul]^{\psi}
&
\\
& 
{\frac{S[T]}{\bangle{T^2-b_2}}}
\ar[u]_{\kappa}^{\cong}
\ar@/^2pc/[uul]
& 
}
$$
where, denoting by $r_1$ and $r_2$ the canonical 
sections of the reduced divisors
$\pi^{-1}(B_1)$ and $\pi^{-1}(B_2)$ respectively, 
the homomorphism $\psi$ is induced by $u_i \mapsto r_i$.

In this claim, everything is straightforward
except the definition of the isomorphism
$\kappa$.
By~\ref{prop:abcov} 
we know that $R_L$ is generated 
as a $\pi^*(S)$-module by 
$1$ and a section $s \in R_L$
satisfying $s^2 = \pi^*(b)$, where 
$b$ denotes the canonical section of $B$.
According to Lemma~\ref{lem:doublecov},
we may choose $s$ to be the canonical 
section $r$ of the ramification divisor 
$\pi^{-1}(B)$.
Thus, we obtain isomorphisms
\begin{eqnarray*}
& & 
R_{L}[u_1,u_2] / \bangle{u_1^2-b_1,u_2^2-b_2,u_1u_2-r}
\\
& \cong &
\pi^*(S)[y,u_1,u_2] / \bangle{y^2 - b,u_1^2-b_1,u_2^2-b_2,u_1u_2-y}
\\
& \cong &
\pi^*(S)[u_1,u_2] / \bangle{u_1^2-b_1,u_2^2-b_2}.
\end{eqnarray*}
Now we use our assumption $S = S'[t_1]$.
This enables us to define a ring homomorphism 
$$ 
\t{\kappa} \colon 
S[T] \ \to \ \pi^*(S)[u_1,u_2],
\qquad
S' \ni g \ \mapsto \ \pi^*(g) \in \pi^*(S'),
\quad
t_1 \mapsto u_1, 
\quad
T \mapsto u_2.
$$
It sends $t_1^2$ to $u_1^2$, which 
defines the same element in 
$\pi^*(S)[u_1,u_2] / \bangle{u_1^2-b_1,u_2^2-b_2}$
as $\pi^*(t_1)$. Consequently, $\t{\kappa}$
induces the desired isomorphism
$$ 
\kappa \colon 
S[T] / \bangle{T^2-b_2} 
\ \to \ 
\pi^*(S)[u_1,u_2] / \bangle{u_1^2-b_1,u_2^2-b_2}.
$$

The next step is to show that the homomorphism~$\psi$ 
of the above diagram is an isomorphism.
For this, it is enough to show that 
$S[T]/\bangle{T^2-b_2}$ is a normal ring.
Indeed, $R_L \to R$ is of degree two,
$$
R_L \ \to \ R_L[u_1,u_2]/\bangle{u_1^2-b_1,u_2^2-b_2,u_1u_2-r}
$$ 
is of degree at least two and thus $\psi$ is a 
finite morphism of degree one. If we know 
that $S[T]/\bangle{T^2-b_2}$ is normal, 
we can conclude that $\psi$ is an isomorphism.

In order to show that $S[T]/\bangle{T^2-b_2}$
is normal, note that $S$ can be made into a 
$\ZZ$-graded ring by assigning to each 
$\ZZ^r$-homogeneous element 
the $w_1$-component of its $\ZZ^r$-degree.
In particular, then $\deg(b_2)$ is odd.
Morever, $b_2 \in S$ is a prime element.
Thus, we can apply the result~\cite[p.~45]{ScSt}
and obtain that $S[T]/\bangle{T^2-b_2}$
is even factorial. In particular it is 
normal.

Having verified that $\psi$ is an isomorphism,
the commutative diagram tells us that the Cox 
ring $R$ of $X$ is isomorphic to 
$S[T]/\bangle{T^2-b_2}$.
Consequently, $R$ is the polynomial ring 
$\pi^*(S')[T_1,T_2]$ divided by the relation 
$T_2^2 - \pi^*(b_2)$, 
where $\pi^*(b_2)$ only depends on 
the first variable. 
The degrees of the generators $T_i$ are 
easily computed using Lemma~\ref{lem:doublecov}~(ii).
\end{proof}

\goodbreak

We are ready to compute the Cox rings of 
generic K3-surfaces $X$ admitting
a non-symplectic involution and 
satisfying $2 \le \varrho(X) \le 5$.
We will work with the curves 
$D_1, D_2 \subseteq \FF_0$ given by 
$$
D_1 \ := \ \{0\} \times \PP_1,
\qquad
\qquad
D_2 \ := \ \PP_1 \times \{0\}
,$$
 the curves $C_1, C_2 \subseteq \FF_4$ 
given by 
$$ 
C_1^2 \ = \ -4,
\qquad
C_2 \ := \ q^{-1}(0),
\qquad
C_3 := q^{-1}(\infty),
$$
where $q \colon \FF_4 \to \PP_1$ is the 
bundle projection, and the curves 
$E_1,E_2 \subseteq \Bl_1(\PP^2)$
with
$$
E_1^2 \ = \ 1,
\qquad
\qquad
E_2^2 \ =  \ -1.
$$
Moreover, on blow ups of the 
surfaces $\FF_0$ and $\FF_4$,
we denote the proper 
transforms of the curves $D_i$ 
and $C_j$ again by $D_i$ and $C_j$.

\begin{proposition}
\label{rhoX2}
Let $X$ be a generic K3-surface admitting a 
non-symplectic involution, and let 
$\pi \colon X \to Y$ be the associated double cover.
If $\varrho(X)=2$ holds, then the following 
cases can occur.
\begin{enumerate}
\item
We have $Y = \mathbb F_0$.
Then $\Cl(X) = \ZZ \mal \pi^*(w_1) \oplus \ZZ \mal \pi^*(w_2)$
holds, where $w_i  \in \Cl(Y)$ is the class 
of $D_i \in \WDiv(Y)$.
The Cox ring of $X$ is 
\begin{eqnarray*}
\mathcal{R}(X)
& = &
\CC[T_1,\ldots, T_5]/\bangle{T_5^2 - f}
\end{eqnarray*}
with a polynomial $f \in \CC[T_1, \ldots, T_4]$ 
and the degree of $T_i$ w.r.t. the above basis 
is the $i$-th column of the matrix
\begin{eqnarray*}
Q
& = & 
{\tiny
\left[
\begin{array}{rrrrr}
1 & 0 & 1 & 0 & 2
\\
0 & 1 & 0 & 1 & 2
\end{array}
\right]
}.
\end{eqnarray*}
\item
We have $Y = \FF_4$. Then $\Cl(X) =
\ZZ \mal \pi^*(w_1)/2 
\oplus 
\ZZ \mal \pi^*(w_2)
$
holds,
where $w_i  \in \Cl(Y)$ denotes the class 
of $C_i \in \WDiv(Y)$.
The Cox ring of $X$ is 
\begin{eqnarray*}
\mathcal{R}(X)
& = &
\CC[T_1,\ldots, T_5]/\bangle{T_5^2 - f}
\end{eqnarray*}
with a polynomial $f \in \CC[T_1^2, T_2, T_3, T_4]$ 
and, w.r.t. the above basis, the degree of $T_i$ is the 
$i$-th column of the matrix
\begin{eqnarray*}
Q
& = & 
{\tiny
\left[
\begin{array}{rrrrr}
1 & 0 & 2 & 0 & 3
\\
0 & 1 & 4 & 1 & 6
\end{array}
\right]
}.
\end{eqnarray*}
\item
We have $Y = Bl_1(\PP^2)$. Then $\Cl(X) =
\ZZ \mal \pi^*(w_1)
\oplus 
\ZZ \mal \pi^*(w_2)
$
holds,
where $w_i  \in \Cl(Y)$ denotes the class 
of $E_i \in \WDiv(Y)$.
The Cox ring of $X$ is 
\begin{eqnarray*}
\mathcal{R}(X)
& = &
\CC[T_1,\ldots, T_5]/\bangle{T_5^2 - f}
\end{eqnarray*}
with a polynomial $f \in \CC[T_1, T_2, T_3, T_4]$ 
and, w.r.t.~the above basis, the degree of $T_i$ is the 
$i$-th column of the matrix
\begin{eqnarray*}
Q
& = & 
{\tiny
\left[
\begin{array}{rrrrr}
1 & 0 & -1 & -1 & -1
\\
0 & 1 & 1 & 1 & 3
\end{array}
\right]
}.
\end{eqnarray*}
\end{enumerate}
\end{proposition}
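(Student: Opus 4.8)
The plan is to organize the three cases according to the number of components of the branch divisor, taking Proposition~\ref{quot} as the point of departure. For $\varrho(X)=2$ that proposition already identifies the quotient surface $Y$ and the branch divisor $B$ of $\pi\colon X\to Y$: one has $Y=\FF_0$ with $B=C_9$ irreducible, $Y=\Bl_1(\PP^2)$ with $B=C_9$ irreducible, and $Y=\FF_4$ with $B=\PP_1+C_{10}$ having two components. Since all three surfaces $Y$ are toric, Construction~\ref{constr:cox} presents their Cox rings as explicit polynomial rings, and $-K_Y$ is read off as the sum of the classes of the torus invariant divisors. Throughout I would use that $B\sim -2K_Y$ by Proposition~\ref{genk3double}(ii)(a), and that the index of $\pi^*(\Cl(Y))$ in $\Cl(X)$ equals $2^{n-1}$ with $n$ the number of components of $B$, by Proposition~\ref{genk3double}(ii)(b).

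For $Y=\FF_0$ and $Y=\Bl_1(\PP^2)$ the divisor $B$ is irreducible, so $n=1$ and Proposition~\ref{genk3double}(ii)(b) gives that $\pi^*\colon\Cl(Y)\to\Cl(X)$ is an isomorphism. I would then view $\pi$ as the covering of Construction~\ref{constr:abcov} with $r=1$, $n_1=2$ and $D_1=-K_Y$, and apply Proposition~\ref{prop:abcov}, obtaining
\begin{eqnarray*}
\mathcal{R}(X)
&\cong&
\mathcal{R}(Y)[T_5]/\bangle{T_5^2-f},
\end{eqnarray*}
where $f$ is the pullback of the section cutting out $B$, so $\deg(f)=\pi^*(-2K_Y)$ and $\deg(T_5)=\pi^*(-K_Y)$. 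The generators $T_1,\ldots,T_4$ are the pullbacks of the toric coordinates of $\mathcal{R}(Y)$, and their degrees are the $\pi^*$-images of the classes of the torus invariant divisors; assembling these with $\deg(T_5)$ reproduces the matrices $Q$ of parts~(i) and~(iii). Note that $f$ lies in $\CC[T_1,\ldots,T_4]$ precisely because it is a section pulled back from $Y$.

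For $Y=\FF_4$ the branch divisor is $B=C_1+C_{10}$ with $C_1$ the rational section at infinity, so $n=2$ and $\pi^*$ is no longer surjective; here the right tool is Proposition~\ref{f4double}. Its hypotheses hold: the Cox ring of $\FF_4$ is a polynomial ring, splitting as $S=S'[t_1]$ with $t_1$ the canonical section of $C_1=\PP_1$, and the canonical section of $C_{10}$ serves as $f\in S'$. Proposition~\ref{f4double} then yields $\mathcal{R}(X)=\pi^*(S')[T_1,T_2]/\bangle{T_2^2-\pi^*(f)}$ with $\deg(T_1)=\pi^*(w_1)/2$ and $\deg(T_2)=-\pi^*(2K_Y+w_1)/2$. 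Relabelling $T_2$ as $T_5$ and reading off the remaining degrees from the same proposition together with the matrix $A$ of Lemma~\ref{lem:doublecov}(ii) gives the matrix $Q$ of part~(ii); the relation $\pi^*(t_1)=T_1^2$ of Proposition~\ref{f4double} explains why $f$ lands in $\CC[T_1^2,T_2,T_3,T_4]$.

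The conceptual steps are therefore routine once the correct reduction is chosen, and the real work is the explicit degree bookkeeping, which is also the step most prone to error. Concretely, in each case one must compute $-K_Y$ and its pullback in the chosen basis, and in case~(ii) carefully track the factor of two coming from $\pi^*(w_1)=2\deg(T_1)$ so that the columns of $Q$ and the polynomial subring containing $f$ come out exactly as stated.
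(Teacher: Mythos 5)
Your proposal is correct and follows essentially the same route as the paper's proof: Proposition~\ref{quot} to identify $Y$ and the branch divisor, Construction~\ref{constr:cox} for the toric Cox rings of $Y$, Propositions~\ref{genk3double}~(ii) and~\ref{prop:abcov} in the irreducible-branch cases (i) and (iii), and Lemma~\ref{lem:doublecov} together with Proposition~\ref{f4double} for the two-component case $Y=\FF_4$. The only difference is expository: you make explicit the choice $r=1$, $n_1=2$, $D_1=-K_Y$ in Construction~\ref{constr:abcov} and the degree bookkeeping, which the paper leaves implicit.
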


\begin{proof}
First note that by 
Proposition~\ref{quot},
the surface $Y$ is one of 
the three types listed in the 
assertion.

If $Y  = \PP_1 \times \PP_1$ holds,
then $\Cl(Y) \cong \ZZ^2$ is generated
by the classes $w_1, w_2$ of $D_1, D_2$
and the Cox ring of $Y$ is given by
$$ 
\CC[T_1, \ldots, T_4],
\quad
\deg(T_1) = \deg(T_3) \ = w_1,
\quad
\deg(T_2) = \deg(T_4) \ = w_2,
$$
use e.g. Construction~\ref{constr:cox}.
Similarly, if $Y=\Bl_1(\PP^2)$, then 
$\Cl(Y) \cong \ZZ^2$ is generated
by the classes $w_1, w_2$ of $E_1, E_2$
and the Cox ring of $Y$ is given by
$$ 
\CC[T_1, \ldots, T_4],
\quad
\deg(T_1) = w_1,
\quad
\deg(T_2) = \deg(T_3) \ = w_1-w_2,
\quad
\deg(T_4)=w_2.
$$
In both cases, Proposition~\ref{quot}
tells us that the branch divisor $B \subseteq Y$ 
is irreducible. 
Propositions~\ref{genk3double}~(ii) 
and~\ref{prop:abcov} thus show that the 
Cox ring is as claimed in~(i) and~(iii).

If $Y = \FF_4$ holds, then
$\Cl(Y) \cong \ZZ^2$ is generated by 
clases $w_1,w_2$ of $C_1,C_2$
and the Cox ring of $Y$ is given by
$$ 
\CC[T_1, \ldots, T_4],
\quad
\deg(T_i) = w_1,
\quad
\deg(T_3) \ = w_1 + 4w_2,
\quad
\deg(T_2) = \deg(T_4) \ = w_2.
$$
This time, Lemma~\ref{lem:doublecov} and 
Proposition~\ref{f4double} show
that the Cox ring of $X$ is as claimed in~(ii).
\end{proof}

\goodbreak

\begin{proposition}
\label{rhoX3}
Let $X$ be a generic K3-surface admitting a 
non-symplectic involution, and let 
$\pi \colon X \to Y$ be the associated double cover.
If $\varrho(X)=3$ holds, then the following 
cases can occur.
\begin{enumerate}
\item
The surface $Y$ is the blow up of 
$\FF_0$ at the point $(0,0)$.
If $D_3 \subseteq Y$ denotes the 
exceptional curve, then
\begin{eqnarray*}
\Cl(X)
& = &
\ZZ \mal \pi^*(w_1) 
\ \oplus \
\ZZ \mal \pi^*(w_2)
\ \oplus \
\ZZ \mal \pi^*(w_3)
\end{eqnarray*}
holds, where $w_i  \in \Cl(Y)$ denotes the class 
of $D_i \in \WDiv(Y)$.
Moreover, the Cox ring of $X$ is given by 
\begin{eqnarray*}
\mathcal{R}(X)
& = &
\CC[T_1,\ldots, T_6]/\bangle{T_6^2 - f}
\end{eqnarray*}
with a polynomial $f \in \CC[T_1, T_2, \ldots, T_5]$ 
and, w.r.t. the above basis, the degree of $T_i$ is the 
$i$-th column of the matrix
\begin{eqnarray*}
Q
& = & 
{\tiny
\left[
\begin{array}{rrrrrr}
1 & 0 & 0 & 1 & 0 & 2
\\
0 & 1 & 0 & 0 & 1 & 2
\\
0 & 0 & 1 & 1 & 1 & 3
\end{array}
\right]
}.
\end{eqnarray*}
\item
The surface $Y$ is the blow up of $\FF_4$ 
at the point in $S_0 \cap q^{-1}(0)$,
where $S_0 \in \FF_4$ is the zero section. 
Then the  divisor class group of $X$ is  
\begin{eqnarray*}
\Cl(X)
& = &
\ZZ \mal \frac{\pi^*(w_1)}{2} 
\ \oplus \
\ZZ \mal \pi^*(w_2)
\ \oplus \
\ZZ \mal \pi^*(w_3),
\end{eqnarray*}
where $w_i  \in \Cl(Y)$ denotes the class 
of $C_i \in \WDiv(Y)$.
Moreover, the Cox ring of $X$ is given by 
\begin{eqnarray*}
\mathcal{R}(X)
& = &
\CC[T_1,\ldots, T_6]/\bangle{T_6^2 - f}
\end{eqnarray*}
with a polynomial $f \in \CC[T_1^2, T_2, \ldots, T_5]$ 
and, w.r.t. the above basis, the degree of $T_i$ is the 
$i$-th column of the matrix
\begin{eqnarray*}
Q
& = & 
{\tiny
\left[
\begin{array}{rrrrrr}
1 & 0 & 0 & 2 & 0 & 3
\\
0 & 1 & 0 & 1 & -1 & 1
\\
0 & 0 & 1 & 3 & 1 & 5
\end{array}
\right]
}.
\end{eqnarray*}
\end{enumerate}
\end{proposition}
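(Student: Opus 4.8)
The plan is to follow the strategy of the proof of Proposition~\ref{rhoX2} essentially verbatim, splitting into two cases according to the shape of the branch divisor. First I would invoke Proposition~\ref{quot}: for $\varrho(X) = 3$ the intersection form on $\Cl(X)$ is either $U(2) \oplus A_1$, in which case $Y = \Bl_1(\FF_0)$ and the branch divisor $B$ is an irreducible curve, or $U \oplus A_1$, in which case $Y = \Bl_1(\FF_4)$ and $B = C_1 + C_B$ splits with $C_1$ the $(-4)$-curve and $C_B$ irreducible. In both cases $Y$ is the blow up of a toric surface at a torus fixed point, hence itself a smooth projective toric surface, so its Cox ring is a polynomial ring $\CC[T_1,\dots,T_5]$ whose grading I would read off from the fan via Construction~\ref{constr:cox}.

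For case~(i) the fan of $Y = \Bl_1(\FF_0)$ is obtained from that of $\FF_0$, with rays $\pm e_1, \pm e_2$, by inserting the ray through $e_1 + e_2$ subdividing the cone $\cone(e_1,e_2)$ over the blown up point $(0,0) = D_1 \cap D_2$. Construction~\ref{constr:cox} then yields $\mathcal{R}(Y) = \CC[T_1,\dots,T_5]$ with degrees the first five columns of $Q$ in the basis $w_1 = [D_1]$, $w_2 = [D_2]$, $w_3 = [D_3]$; in particular one computes $-K_Y = (2,2,3)$. Since $B$ is irreducible, Proposition~\ref{genk3double}~(ii) shows that $\pi^*(\Cl(Y))$ has index $2^{n-1} = 1$ in $\Cl(X)$, so $\pi^*$ is an isomorphism. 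Feeding this into Proposition~\ref{prop:abcov} with $r = 1$, $n_1 = 2$ and $D_1 = -K_Y$ (so that $E_1 = -2K_Y = [B]$) gives $\mathcal{R}(X) = \CC[T_1,\dots,T_6]/\bangle{T_6^2 - f}$ with $\deg(T_6) = -K_Y = (2,2,3)$ and $f$ the pullback of the canonical section of $B$, as claimed.

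For case~(ii) the fan of $Y = \Bl_1(\FF_4)$ is obtained from that of $\FF_4$ by subdividing the cone over the point $C_5 \cap C_2 = S_0 \cap q^{-1}(0)$. Here the $(-4)$-curve $C_1$ is a toric boundary divisor, so its canonical section $t_1$ is one of the Cox ring variables and $\mathcal{R}(Y) = S'[t_1]$ with $S' = \CC[T_2,\dots,T_5]$; this is exactly the hypothesis needed to apply Proposition~\ref{f4double}. That proposition, applied to $B = C_1 + C_B$, yields $\mathcal{R}(X) = \pi^*(S')[T_1,T_6]/\bangle{T_6^2 - \pi^*(f)}$ with $\deg(T_1) = \pi^*(w_1)/2$ and $\deg(T_6) = -\pi^*(2K_Y + w_1)/2$, where $f$ is the canonical section of $C_B$. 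It then remains to transport the $\Cl(Y)$-degrees to $\Cl(X)$ via the matrix $A = \diag(2,1,1)$ of Lemma~\ref{lem:doublecov}~(ii): reading the toric degrees of the five variables off the fan and applying $A$ reproduces the first five columns of $Q$, while the formula for $\deg(T_6)$ together with $-K_Y = (2,1,5)$ gives the last column $(3,1,5)$; finally $\pi^*(t_1) = T_1^2$ forces $f \in \CC[T_1^2, T_2, \dots, T_5]$.

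The conceptual content is entirely supplied by Propositions~\ref{quot}, \ref{genk3double}, \ref{prop:abcov}, \ref{f4double} and Lemma~\ref{lem:doublecov}, so the argument is a bookkeeping exercise rather than a source of genuine difficulty. The step requiring the most care is the explicit toric computation of $\mathcal{R}(Y)$ and of the degree matrices: one must fix the fans, choose the basis of $\Cl(Y)$ compatibly with the branch components, and track degrees correctly along $\pi^*$. In case~(ii) the one genuine point to verify is that the rational branch component $C_1$ really is a toric boundary curve on $Y$, which holds because it is the proper transform of the section at infinity of $\FF_4$ and this is disjoint from the blown up point; this is what makes $t_1$ a polynomial generator and secures the hypothesis $S = S'[t_1]$ of Proposition~\ref{f4double}.
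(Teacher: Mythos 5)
Your proposal is correct and takes essentially the same route as the paper: the paper's own proof invokes Proposition~\ref{quot} for the case split, moves the blown-up point to a torus fixed point by an automorphism so that $Y$ becomes toric, and then runs the computation exactly as in Proposition~\ref{rhoX2}, i.e., via Construction~\ref{constr:cox} for $\mathcal{R}(Y)$ followed by Propositions~\ref{genk3double}~(ii) and~\ref{prop:abcov} in case~(i) and by Lemma~\ref{lem:doublecov} and Proposition~\ref{f4double} in case~(ii). Your explicit toric bookkeeping (the first five columns of $Q$, $-K_Y = (2,2,3)$ resp. $(2,1,5)$, and the degree of the last generator) checks out against the stated matrices.
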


\begin{proof}
The fact that
$Y$ is either $\PP_1 \times \PP_1$ 
blown up at a point $p$
or $\FF_4$ blown up at a point $p$
follows from Proposition~\ref{quot}. 
Moreover, the same Proposition 
yields that the branch divisor has 
one component in the first case 
and two components in the second one.
In both cases,
applying a suitable automorphism,
we may assume that the point $p$ 
to be blown up is as in the 
assertion. Then, in both cases,
the surface $Y$ is toric
and the computation of the Cox rings 
then goes the same way as in the 
preceding proposition.
\end{proof}

\begin{proposition}
\label{rhoX4}
Let $X$ be a generic K3-surface admitting a 
non-symplectic involution, and let 
$\pi \colon X \to Y$ be the associated double cover.
If $\varrho(X)=4$ holds, then the following 
cases can occur.
\begin{enumerate}
\item
The surface $Y$ is the blow up of $\FF_0$ 
at the points $(0,0)$ and $(\infty,\infty)$.
If $D_3,D_4 \subseteq Y$ are exceptional
curves corresponding to these points,
 then
\begin{eqnarray*}
\Cl(X)
& = &
\ZZ \mal \pi^*(w_1) 
\ \oplus \
\ldots
\ \oplus \
\ZZ \mal \pi^*(w_4)
\end{eqnarray*}
holds, where $w_i  \in \Cl(Y)$ denotes the class 
of $D_i \in \WDiv(Y)$.
Moreover, the Cox ring of $X$ is given by 
\begin{eqnarray*}
\mathcal{R}(X)
& = &
\CC[T_1,\ldots, T_7]/\bangle{T_7^2 - f}
\end{eqnarray*}
with a polynomial $f \in \CC[T_1, T_2, \ldots, T_6]$ 
and, w.r.t. the above basis, the degree of $T_i$ is the 
$i$-th column of the matrix
\begin{eqnarray*}
Q
& = & 
{\tiny
\left[
\begin{array}{rrrrrrr}
1 & 0 & 0 & 0 & 1 & 0 & 2
\\
0 & 1 & 0 & 0 & 0 & 1 & 2
\\
0 & 0 & 1 & 0 & 1 & 1 & 3
\\
0 & 0 & 0 & 1 & -1 & -1 & -1 
\end{array}
\right]
}.
\end{eqnarray*}
\item
The surface $Y$ is the blow up of $\FF_4$ 
at the two points $p_1 \in S_0 \cap q^{-1}(0)$
and $p_2 \in S_0 \cap q^{-1}(\infty)$,
where $S_0 \in \FF_4$ is the zero section. 
We have 
\begin{eqnarray*}
\Cl(X)
& = &
\ZZ \mal \frac{\pi^*(w_1)}{2} 
\ \oplus \
\ZZ \mal \pi^*(w_2)
\ \oplus \
\ZZ \mal \pi^*(w_3)
\ \oplus \
\ZZ \mal \pi^*(w_4),
\end{eqnarray*}
where $w_i  \in \Cl(Y)$ is the class 
of $C_i \in \WDiv(Y)$ and $C_4 \subseteq Y$ is the 
exceptional curve over $p_1 \in \FF_4$.
The Cox ring of $X$ is given by 
\begin{eqnarray*}
\mathcal{R}(X)
& = &
\CC[T_1,\ldots, T_7]/\bangle{T_7^2 - f}
\end{eqnarray*}
with a polynomial $f \in \CC[T_1^2, T_2, \ldots, T_6]$ 
and, w.r.t. the above basis, the degree of $T_i$ is the 
$i$-th column of the matrix
\begin{eqnarray*}
Q
& = & 
{\tiny
\left[
\begin{array}{rrrrrrr}
1 & 0 & 0 & 0 & 2 & 0 & 3
\\
0 & 1 & 0 & 0 & 3 & 1 & 5
\\
0 & 0 & 1 & 0 & 1 & -1 & 1
\\
0 & 0 & 0 & 1 & 2 & 1 & 4
\end{array}
\right]
}.
\end{eqnarray*}
\end{enumerate}
\end{proposition}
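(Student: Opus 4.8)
The plan is to follow the strategy already used for Propositions~\ref{rhoX2} and~\ref{rhoX3}, splitting into the two cases according to the number of components of the branch divisor. First I would invoke Proposition~\ref{quot}: for $\varrho(X) = 4$ the quotient surface $Y$ is either $\Bl_2(\FF_0)$, with intersection form $U(2) \oplus A_1^2$ and irreducible branch divisor, or $\Bl_2(\FF_4)$, with intersection form $U \oplus A_1^2$ and branch divisor having two components. Using the genericity of $X$ together with a suitable automorphism of $\FF_0$ respectively $\FF_4$, I would normalize the two blown-up points to the toric fixed points named in the assertion, namely $(0,0)$ and $(\infty,\infty)$ on $\FF_0$, and $S_0 \cap q^{-1}(0)$, $S_0 \cap q^{-1}(\infty)$ on $\FF_4$. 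In both cases this makes $Y$ a smooth projective toric surface with $\Cl(Y) \cong \ZZ^4$.

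Next I would compute the Cox ring of the toric surface $Y$ by means of Construction~\ref{constr:cox}. Each $Y$ carries six torus-invariant prime divisors (the four inherited from $\FF_0$ or $\FF_4$, together with the two exceptional curves), so $\mathcal{R}(Y) = \CC[T_1, \ldots, T_6]$ is a polynomial ring, and extracting the matrix whose columns are the degrees $\deg(T_i) \in \Cl(Y)$ is a direct lattice computation from the fan.

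For case~(i) the branch divisor is irreducible, so Proposition~\ref{genk3double}~(ii) gives that the pullback $\pi^* \colon \Cl(Y) \to \Cl(X)$ is an isomorphism of index $2^{0}=1$. Identifying $\Cl(X)$ with $\Cl(Y)$ via $\pi^*$ and applying Proposition~\ref{prop:abcov} to the cyclic double cover then yields $\mathcal{R}(X) = \mathcal{R}(Y)[T_7]/\bangle{T_7^2 - f}$, where $f$ is the pullback of the section of the branch divisor $B \sim -2K_Y$, so that $\deg(T_7) = -\pi^*(K_Y)$. For case~(ii) the branch divisor splits as $B = C_1 + C_B$ with $C_1$ the rational curve; here I would apply Lemma~\ref{lem:doublecov}, which supplies the index-two pullback matrix $A$ and hence the factor $\pi^*(w_1)/2$ in the basis of $\Cl(X)$, together with Proposition~\ref{f4double}, writing $\mathcal{R}(Y) = S'[t_1]$ with $t_1$ the section of $C_1$ and $f \in S'$ the section of $C_B$. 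This gives $\mathcal{R}(X) = \pi^*(S')[T_1, T_7]/\bangle{T_7^2 - \pi^*(f)}$ with $\deg(T_1) = \pi^*(w_1)/2$ and $\deg(T_7) = -\pi^*(2K_Y + w_1)/2$, which is exactly why $f$ enters only through $T_1^2 = \pi^*(t_1)$.

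The conceptual content is carried entirely by the earlier results, so I expect no genuine obstacle; the work is bookkeeping. The step most likely to require care is the explicit assembly of the $4 \times 7$ degree matrices $Q$: fixing the basis of $\Cl(Y)$ dictated by the named curves $D_i$ or $C_i$, transporting the toric grading of $\mathcal{R}(Y)$ through $\pi^*$ (and, in case~(ii), through the matrix $A$ with its factor of two), and finally pinning down the degrees of $T_7$ (and $T_1$) from the canonical class via $B \sim -2K_Y$. Checking that the resulting columns coincide with those displayed in the statement is the one place where an index or sign slip could occur.
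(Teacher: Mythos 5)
Your proposal is correct and follows essentially the same route as the paper: Proposition~\ref{quot} to identify the two cases and the number of branch components, a suitable automorphism to normalize the blown-up points to the toric positions, Construction~\ref{constr:cox} for the toric Cox ring of $Y$, and then Propositions~\ref{genk3double}~(ii) and~\ref{prop:abcov} in case~(i) versus Lemma~\ref{lem:doublecov} and Proposition~\ref{f4double} in case~(ii). The paper's own proof is just a terser version of this (deferring to the computations in Propositions~\ref{rhoX2} and~\ref{rhoX3}), and your explicit degree formulas $\deg(T_7)=-\pi^*(K_Y)$ and $\deg(T_7)=-\pi^*(2K_Y+w_1)/2$ do reproduce the stated matrices.
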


\begin{proof}
Again, Proposition~\ref{quot}
tells us that
$Y$ is either $\PP_1 \times \PP_1$ 
blown up at two points $p,q$
or $\FF_4$ blown up at two points 
$p,q$ and that the branch divisor has 
one component in the first case 
and two components in the second one.
In both cases, we may 
apply a suitable automorphism,
and achieve that the points $p,q$ 
to be blown up are as in the 
assertion. Thus, again,
the surface $Y$ is toric
and that computation of the Cox 
rings proceeds as before.
\end{proof}

\goodbreak

\begin{proposition}
\label{rhoX5}
Let $X$ be a generic K3-surface admitting a 
non-symplectic involution, and let 
$X \to Y$ be the associated double cover.
If $\varrho(X)=5$ holds, then the folllowing 
cases can occur.
\begin{enumerate}
\item
The surface $Y$ is the blow up of $\FF_0$ 
at three general points.
Then the Cox ring of $X$ is
\begin{eqnarray*}
\mathcal{R}(X)
& = &
\CC[T_1,\ldots, T_{11}]/\bangle{f_1, \ldots, f_5, T_{11}^2 - g}
\end{eqnarray*}
where $f_1, \ldots, f_5$ are the Pl\"ucker relations 
in the variables $T_1, \ldots, T_{10}$, i.e., we have 
$$ 
f_1 \ = \ T_2T_5 - T_3T_6 + T_4T_7,
\qquad
f_2 \ = \ T_1T_5 - T_3T_8 + T_4T_9,
$$
$$
f_3 \ = \ T_1T_6 - T_2T_8 + T_4T_{10},
\qquad
f_4 = T_1T_7 - T_2T_9 + T_3T_{10},
$$
$$
f_5 \ = \ T_5T_{10} - T_6T_9 + T_7T_8
$$
and 
$g \in \CC[T_1,\ldots, T_{10}]$ is a prime polynomial.
The degree of $T_i \in \mathcal{R}(X)$ is
the $i$-th column of
\begin{eqnarray*}
Q & = & 
{\tiny
\left[
\begin{array}{rrrrrrrrrrr}
0 & 0 & 0 & 0 & 1 & 1 & 1 & 1 & 1 & 1 &    -3 
\\
1 & 0 & 0 & 0 & -1 & -1 & -1 & 0 & 0 & 0 &  1
\\
0 & 1 & 0 & 0 & -1 & 0 & 0 & -1 & -1 & 0 &  1
\\
0 & 0 & 1 & 0 & 0 & -1 & 0 & -1  & 0 & -1 & 1
\\
0 & 0 & 0 & 1 & 0 & 0 & -1 & 0 & -1 & -1 & 1
\end{array}
\right]
}
\end{eqnarray*}
\item
The surface $Y$ is the blow up of $\FF_4$ 
at three general points.
Then the Cox ring of $X$ is
\begin{eqnarray*}
\mathcal{R}(X)
& = &
\CC[T_1,\ldots, T_9]/\bangle{T_2T_5 + T_4T_6+T_7T_8, \, T_9^2 - f}
\end{eqnarray*}
where $f \in \CC[T_1, \ldots, T_8]$ is a 
prime polynomial and
the degree of $T_i \in \mathcal{R}(X)$ is
the $i$-th column of
\begin{eqnarray*}
Q 
& = & 
{\tiny
\left[
\begin{array}{rrrrrrrrr}
1 & 0 & 0 & 0 & 0 & 0 & -2 & 2 & 1
\\
0 & 1 & 0 & 0 & 0 & 1 & -2 & 3  & 4
\\
0 & 0 & 1 & 0 & 0 & -1 & -1 & 1  & 0
\\
0 & 0 & 0 & 1 & 0 & 1 & -1 & 2 & 4
\\
0 & 0 & 0 & 0 & 1 & 0 & 1 & -1 & 1
\end{array}
\right]
}
\end{eqnarray*}
\end{enumerate}
\end{proposition}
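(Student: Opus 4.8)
The plan is to treat the two cases of Proposition~\ref{rhoX5} separately and, in each, to reduce the computation of $\mathcal{R}(X)$ to the Cox ring of the quotient surface $Y$ by invoking the covering results of Section~\ref{sec:abcov}. Proposition~\ref{quot} supplies the needed input: for $\Cl(X)=U(2)\oplus A_1^3$ one has $Y=\Bl_3(\FF_0)$ with \emph{irreducible} branch divisor $B$, whereas for $\Cl(X)=U\oplus A_1^3$ one has $Y=\Bl_3(\FF_4)$ with branch divisor $B=C_1+C_B$ of \emph{two} components, $C_1$ rational. Since the points are general, $Y$ is smooth and, in case~(i), in general position; finite generation of $\mathcal{R}(X)$ in both cases is secured in advance by Proposition~\ref{prop:abcovfingen}.

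For case~(i) the first step is to recognise $Y=\Bl_3(\FF_0)\cong\Bl_4(\PP^2)$ as the del Pezzo surface of degree five, whose Cox ring is classically the homogeneous coordinate ring of the Grassmannian $\operatorname{Gr}(2,5)$ in its Pl\"ucker embedding, namely $\CC[T_1,\ldots,T_{10}]/\bangle{f_1,\ldots,f_5}$ with the five Pl\"ucker quadrics, graded so that the ten generators are the classes of the ten $(-1)$-curves; see~\cite{BaPo}. Because $B$ is irreducible, Proposition~\ref{genk3double}~(ii) shows that $\pi^*\colon\Cl(Y)\to\Cl(X)$ is an isomorphism, so I may apply Proposition~\ref{prop:abcov} with $r=1$ and $n_1=2$. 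This yields $\mathcal{R}(X)=\mathcal{R}(Y)[T_{11}]/\bangle{T_{11}^2-g}$, where $g$ is the pullback of the canonical section of $B$. From $B\sim -2K_Y$ (Proposition~\ref{genk3double}~(ii)) the relation forces $\deg(T_{11})=-K_Y$, giving the last column of the degree matrix, while the remaining ten columns are carried over unchanged by $\pi^*$. Primality of $g$ follows from the irreducibility of $B$ together with the factoriality of $\mathcal{R}(Y)$, the class group being free, see~\cite{BeHa1}.

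For case~(ii) the Cox ring of $Y=\Bl_3(\FF_4)$ is exactly the one computed in Proposition~\ref{prop:f4bl3}. The key point needed to invoke Proposition~\ref{f4double} is that the rational component $C_1$ of $B$ is the $(-4)$-curve: by Lemma~\ref{lem:doublecov}~(iii) a component of $B$ has self-intersection $-4$, and this is precisely the curve whose class $w_1$ is the degree of the generator $T_1$. Since $T_1$ does not occur in the single relation of Proposition~\ref{prop:f4bl3}, the Cox ring of $Y$ splits as $S'[T_1]$ with $S'=\CC[T_2,\ldots,T_8]/\bangle{\text{quadric}}$, so the hypotheses of Proposition~\ref{f4double} are met. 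That proposition produces $\mathcal{R}(X)=\pi^*(S')[T_1',T_9]/\bangle{T_9^2-\pi^*(f)}$, which after relabelling is the asserted presentation, the surviving quadric being the pullback of the relation of Proposition~\ref{prop:f4bl3} and $f$ the canonical section of $C_B$ (prime by the factoriality argument in the proof of Proposition~\ref{f4double}). The degrees of the first eight generators are obtained by applying the pullback matrix $A$ of Lemma~\ref{lem:doublecov}~(ii) to the columns of the matrix of Proposition~\ref{prop:f4bl3}, and $\deg(T_9)=-\tfrac12\pi^*(2K_Y+w_1)$, with $K_Y$ read off from $\mathcal{R}(Y)$ via the relation-minus-generators formula of~\cite[Prop.~8.5]{BeHa2}.

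The only genuinely external ingredient is the Grassmannian description of the degree-five del Pezzo Cox ring in case~(i); everything else rests on results already established here, the hard non-toric computation for $\Bl_3(\FF_4)$ having been carried out in Proposition~\ref{prop:f4bl3}. The step I expect to demand the most care is the verification of the hypotheses of Proposition~\ref{f4double} in case~(ii)---identifying the rational branch component with the free generator $T_1$ and confirming the splitting $\mathcal{R}(Y)=S'[T_1]$---followed by the bookkeeping of degrees under $\pi^*$, which, though mechanical, must be performed explicitly to pin down the degree matrices. A final check in each case is that the resulting relations are homogeneous of the stated degrees and that $f$, respectively $g$, is prime.
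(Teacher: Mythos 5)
Your proposal is correct and follows essentially the same route as the paper's own proof: Proposition~\ref{quot} to identify $Y$ and the branch divisor structure, then in case~(i) the Batyrev--Popov description of the degree-five del Pezzo Cox ring combined with Propositions~\ref{genk3double}~(ii) and~\ref{prop:abcov}, and in case~(ii) Propositions~\ref{prop:f4bl3}, \ref{f4double} and Lemma~\ref{lem:doublecov} together with the canonical-class formula of~\cite[Prop.~8.5]{BeHa2}. The only difference is that you spell out details the paper leaves implicit, such as verifying the splitting $\mathcal{R}(Y)=S'[t_1]$ required by Proposition~\ref{f4double}.
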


\begin{proof}
The facts that only (i) and (ii) are 
possible and that the 
branch divisor has one component
in~(i) and two in~(ii) follow
from Proposition~\ref{quot}.

If we are in the situation~(i), 
then $Y$ is the blow up of $\PP_2$ 
at four general points
and hence is a del Pezzo surface.
Its Cox ring is the ring of 
$(3 \times 3)$-minors of a generic 
$(3 \times 5)$-matrix, 
see~\cite[Prop.~4.1]{BaPo}.
Moreover, by Proposition~\ref{genk3double}~(ii),
the pullback $\pi^* \colon \Cl(Y) \to \Cl(X)$ 
is an isomorphism.
Thus, taking the same basis of $\Cl(Y)$ 
as in the proof of~\cite[Prop.~4.1]{BaPo},
the assertion follows from 
Propositions~\ref{prop:abcov} 
and~\ref{genk3double}~(ii).

If we are in situation~(ii), then the assertion 
is a direct consequence of 
Propositions~\ref{prop:f4bl3}, \ref{f4double}
and Lemma~\ref{lem:doublecov}.
Note that the canonical class of $Y$ can
be determined according to~\cite[Prop.~8.5]{BeHa2}
as the degree of the relation minus the sum
of the degrees of the generators of the 
Cox ring of $Y$.
\end{proof}

If $X$ is a generic K3-surface with a non-symplectic 
involution such that the associated double cover 
has an irreducible branch divisor, then we can 
proceed the computation of Cox rings as follows.

\begin{proposition} 
\label{doubledelp}
Let $X$ be a generic K3-surface 
with a non-symplectic involution,
associated double cover 
$\pi \colon X \to Y$
and intersection form
$U(2)\oplus A_1^{k-2}$, 
where $5 \le k \le 9$.
Then $Y$ is a Del Pezzo surface of 
Picard number $k$
and
\begin{enumerate}
\item
the Cox ring $\mathcal{R}(X)$ is generated by the
pull-backs of the $(-1)$-curves of $Y$, 
the section $T$ defining the ramification divisor
and, for $k =9$, 
the pull-back of an irreducible section
of $H^0(Y,- K_Y)$,
\item
the ideal of relations of $\mathcal{R}(X)$ 
is generated by quadratic
relations of degree $\pi^*(D)$, 
where $D^2=0$ and $D \mal K_Y=-2$,
and the relation $T^2-f$ in degree 
$-2\pi^*(K_Y)$, where $f$ is the pullback
of the canonical section of the branch divisor.
\end{enumerate}
\end{proposition}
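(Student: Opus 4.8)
The plan is to reduce the whole statement to the cyclic-cover formalism of Section~\ref{sec:abcov} combined with the known presentation of the Cox ring of a del Pezzo surface. First I would pin down the geometry of $Y$. The intersection form $U(2)\oplus A_1^{k-2}$ is precisely the $U(2)$-family analysed in the proof of Proposition~\ref{quot}: the lattice computation given there (via Lemma~\ref{lem:doublecov}~(ii),(iii)) shows that $\Cl(Y)$ carries the form $U\oplus(-1)^{k-2}$ and that the branch divisor $B$ is irreducible, and this argument is purely lattice-theoretic, so it extends verbatim from $k\le 5$ to $5\le k\le 9$. Consequently $Y$ is the blow up $\Bl_{k-2}(\FF_0)=\Bl_{k-1}(\PP^2)$ of $\PP^2$ at $k-1$ general points, hence a del Pezzo surface of Picard number $k$ and anticanonical degree $10-k$. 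Since $B$ has a single component, Proposition~\ref{genk3double}~(ii)(b) shows that $\pi^*\colon\Cl(Y)\to\Cl(X)$ is an isomorphism; a consistency check is that $\pi^*$ doubles the form, sending $U\oplus(-1)^{k-2}$ to $U(2)\oplus A_1^{k-2}=\Cl(X)$.

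With $\pi^*$ an isomorphism, the covering $\pi\colon X\to Y$ is the cyclic $2$-cover of Construction~\ref{constr:abcov} with $r=1$, $n_1=2$, $D_1\sim -K_Y$ and $E_1=2D_1\sim B$. Choosing $K\subseteq\WDiv(Y)$ mapping isomorphically onto $\Cl(Y)$, Proposition~\ref{prop:abcov} yields a graded isomorphism $\pi_*\mathcal{S}_X\cong\mathcal{S}_Y[T]/\langle T^2-g\rangle$ with $\div(g)=B$. Because $\pi^*$ is an isomorphism of class groups, taking global sections over $Y$ identifies $\Gamma(X,\mathcal{S}_X)$ with the full Cox ring $\mathcal{R}(X)$, so that $\mathcal{R}(X)\cong\mathcal{R}(Y)[T]/\langle T^2-f\rangle$ as a free rank two $\mathcal{R}(Y)$-module, where $T$ has degree $\pi^*(D_1)=-\pi^*(K_Y)$ and $f$ is the pullback of the canonical section of $B$, of degree $-2\pi^*(K_Y)$. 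This already produces the relation $T^2-f$ in the asserted degree, fixes the grading of $T$, and (via Proposition~\ref{prop:abcovfingen}) guarantees finite generation.

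It then remains to feed in the presentation of $\mathcal{R}(Y)$. For del Pezzo surfaces of degree $\ge 2$, i.e. $5\le k\le 8$, the results of~\cite{BaPo} (see also~\cite{lv}) give that $\mathcal{R}(Y)$ is generated by the sections cutting out the $(-1)$-curves and that its ideal of relations is generated by the quadratic ``conic'' relations, one for each class $D$ with $D^2=0$ and $D\mal K_Y=-2$; for degree $1$, i.e. $k=9$, the generators are the $(-1)$-curves together with a section of $H^0(Y,-K_Y)$, the relations again being the quadratic conic relations. Pulling back generators along $\pi^*$ and adjoining $T$ gives the generators listed in~(i), while pulling back the relations produces quadratic relations in the degrees $\pi^*(D)$ of~(ii). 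Since $\mathcal{R}(X)=\mathcal{R}(Y)[T]/\langle T^2-f\rangle$ is an \emph{abstract} presentation, no further mixed relations can occur, so the relation ideal is generated exactly by the pulled-back conic relations and $T^2-f$.

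The step I expect to require the most care is the degree $1$ case $k=9$. Here $-\pi^*(K_Y)$ is a nef class of square $2$, so $h^0(X,-\pi^*K_Y)=3$, splitting as $\pi^*H^0(Y,-K_Y)\oplus\CC\mal T$; one must check carefully which elements of this space are needed as minimal generators, and confirm that the bianticanonical generator of $\mathcal{R}(Y)$ is absorbed by the identity $T^2=f$ rather than contributing a separate generator of $\mathcal{R}(X)$. A second delicate point is the precise identification of the relation degrees: one has to verify, from the del Pezzo combinatorics, that the classes $D$ with $D^2=0$ and $D\mal K_Y=-2$ are exactly the conic-bundle classes carrying the quadratic relations, and that this description is stable under $\pi^*$ and remains a \emph{minimal} generating set of relations. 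The cyclic-cover half of the argument, by contrast, is entirely formal once Proposition~\ref{prop:abcov} is in hand.
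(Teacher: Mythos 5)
Your proposal is correct and follows essentially the same route as the paper's proof: determine $Y$ and the irreducibility of the branch divisor as in Proposition~\ref{quot} (via Nikulin's Theorem), conclude from Proposition~\ref{genk3double}~(ii) that $\pi^*\colon\Cl(Y)\to\Cl(X)$ is an isomorphism, apply Proposition~\ref{prop:abcov} to obtain $\mathcal{R}(X)\cong\mathcal{R}(Y)[T]/\bangle{T^2-f}$, and cite \cite{BaPo} and \cite{lv} for the generators and relations of $\mathcal{R}(Y)$. The differences are purely expository; in particular, the caution you raise about the case $k=9$ mirrors an imprecision already present in the paper itself (in degree one, \cite{BaPo} requires two independent sections of $H^0(Y,-K_Y)$ as generators, not one), and the paper's proof does not treat this point any more carefully than you do.
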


\begin{proof}
As in the proof of Proposition~\ref{quot},
we use~\cite[Thm.~4.2.2]{n2} to see that 
the ramification divisor of $\pi \colon X \to Y$ 
is irreducible.
Then Proposition~\ref{genk3double}~(ii) yields
$\Cl(X)=\pi^*(\Cl(Y))$. 
It follows that the intersection form on
$\Cl(Y)$ is $U \oplus (-1)^{k-2}$.
Consequently, $Y$ is the blow up of 
$\FF_0$ at $k-2$ general points and
hence is a del Pezzo surface.

It is known that ${\mathcal R}(Y)$ is
generated by all the $(-1)$-curves of $Y$ plus, if $k=9$,
an irreducible section of $H^0(Y,-K_Y)$,
see~\cite[Thm.~3.2]{BaPo}.
The ideal of relations of $\mathcal{R}(Y)$ 
is generated by quadratic
relations of degree $D$, where $D$ is a conic bundle, 
i.e., we have $D^2=0$ and
$D \mal K_Y = -2$,
see~\cite[Lemmas~2.2, 2.3 and~2.4]{lv}.
Thus the statement follows from
Proposition~\ref{prop:abcov}.
\end{proof}


\begin{thebibliography}{}
%
\bibitem{AlNi}
V.~Alexeev, V.V.~Nikulin:
{\em Del Pezzo and $K3$ surfaces.} 
MSJ Memoirs, 15. Mathematical Society of Japan, 
Tokyo, 2006. 
%
\bibitem{ACGH}
E.~Arbarello, M.~Cornalba, P.A.~Griffiths, J.~Harris:
{\em Geometry of algebraic curves}, 
Grundlehren der Mathematischen Wissenschaften, 
267. Springer-Verlag, New York, 1985.
%
\bibitem{AM}
M.F.~Atiyah, I.G.~Macdonald:
{\em Introduction to commutative algebra},
Addison-Wesley Publishing Co., 
Reading, Mass.-London-Don Mills, Ont. 1969.
%
\bibitem{BPV} 
W.P. Barth, K. Hulek, C.A.M. Peters, A. Van de Ven:
{\em Compact complex surfaces}, 
Ergebnisse der Mathematik und ihrer Grenzgebiete. 3. Folge. 
A Series of Modern Surveys in Mathematics, 4, Second edition, 
   Springer-Verlag, Berlin, 2004.
%
\bibitem{BaPo} 
V.~Batyrev, O.~Popov,
{\em The Cox ring of a del Pezzo surface.} 
Arithmetic of higher-dimensional algebraic varieties 
(Palo Alto, CA, 2002), 85--103,
Progr. Math., 226, Birkhäuser Boston, Boston, MA, 2004. 
%
\bibitem{BeHa1}
F.~Berchtold, J.~Hausen, 
{\em Homogeneous coordinates for algebraic varieties}.  
J. Algebra  266  (2003),  no. 2, 636--670.
%
\bibitem{BeHa2}
F.~Berchtold, J.~Hausen, 
{\em Cox rings and combinatorics},
Trans. Amer. Math. Soc. 359(3), (2007), 1205--1252.
%
\bibitem{Cox}
D.~Cox, 
{\em The homogeneous coordinate ring of a toric variety}.
J. Algebraic Geom. 4 (1995), no. 1, 17--50. 
%
\bibitem{De}
U.~Derenthal:
{\em Singular Del Pezzo surfaces whose universal torsors are hypersurfaces},
arXiv:math/0604194.
%
\bibitem{Fu}
W.~Fulton:
{\em Intersection theory},
Second edition. 
Ergebnisse der Mathematik und ihrer Grenzgebiete. 
3. Folge. A Series of Modern Surveys in Mathematics, 
2. Springer-Verlag, Berlin, 1998.
%
\bibitem{GalMo}
C.~Galindo, F.~Monserrat,
{\em The total coordinate ring of a smooth projective surface}. 
J. Algebra, 284, (2005), no. 1, 91--101.
%
\bibitem{Ha2}
J.~Hausen, {\em Cox rings and combinatorics II}. 
Moscow Math.~J., 8, (2008), 711-757.
%
\bibitem{hk}
Y.~Hu, S.~Keel, {\em Mori dream spaces and GIT}. 
Michigan Math. J. 48, (2000), 331--348.
%
\bibitem{lv} 
A.~Laface, M.~Velasco:
{\em Picard-graded Betti numbers and the defining ideals of Cox rings}, 
J. of Algebra, 322, (2009), no. 2, 353--372.
%
\bibitem{Ma}
A. L.~Mayer, 
{\em Families of K3-surfaces}. 
Nagoya Math. J., 48, (1972), 1--17.
%
\bibitem{Mo}
D.~Morrison:
{\em On $K3$ surfaces with large Picard number},
Invent. Math., 75, (1984), no. 1, 105--121.
%
\bibitem{ko} 
S.~Kond\=o:
{\em Algebraic K3-surfaces with finite automorphism groups},
Nagoya Math. J., 116, (1989), 1--15.
%
\bibitem{k} 
S.~Kov\'acs:
{\em The cone of curves of a K3-surface}, 
Math. Ann., 300, (1994), 681--691.
%
\bibitem{n1} 
V.V.~Nikulin:
{\em On factor groups of automorphisms groups  
of hyperbolic forms modulo subgroups generated
by 2-reflections: Algebro-geometric applications},
Dokl. Acad. Nauk. SSSR 248, (1979), 1307--1309.
%
\bibitem{n2} 
V.V.~Nikulin,  
{\em  Factor groups of groups of automorphisms 
of hyperbolic forms with respect to subgroups 
generated by $2$-reflections}, 
J. Soviet Math., 22, (1983), 1401--1475.
%
\bibitem{n3} 
V.V.~Nikulin,
{\em Surfaces of type K3 with a finite automorphism 
group and a Picard group of rank three}, 
Trudy Mat. Inst. Steklov 165 (1984); 
Proc. of the Steklov Institute of Math., 
Issue 3, 131--155 (1985).
%
\bibitem{pss} 
I.\ Piatetski-Shapiro, I.\ R.\ Shafarevich:
{\em A Torelli theorem  for algebraic surfaces of type $K3$}, 
Math. USSR Izv., no. 5, (1971), 547--587.
%
\bibitem{SD} 
B. Saint-Donat:
{\em Projective models of K3-surfaces}, 
American J. of Math., 96, no. 4 (1974), 602--639.
%
\bibitem{ScSt} 
G. Scheja, U. Storch:
{\em Zur Konstruktion faktorieller graduierter Integrit\"atsbereiche},
Arch. Math. (Basel), 42, (1984), no. 1, 45--52.
%
\bibitem{StiTeVe}
M.~Stillman, D.~Testa, M.~Velasco:
{\em Groebner bases, monomial group actions, 
and the Cox rings of Del Pezzo surfaces},
arXiv:math/0610261.
%
\bibitem{StuXu}
B.~Sturmfels, Z.~Xu: 
{\em Sagbi Bases of Cox-Nagata Rings\/},
arXiv:0803.0892.
%
\bibitem{TeVaVe}
D.~Testa, A.~V\'{a}rilly-Alvarado, M.~Velasco:
{\em Cox rings of degree one del Pezzo surfaces},
arXiv:0803.0353
%
\bibitem{Z}
D.-Q.~Zhang: 
{\em Quotients of K3-surfaces modulo involutions},  
Japan. J. Math. (N.S.),  24,  (1998),  
no. 2, 335--366.
%
\end{thebibliography}
\end{document}